\theoremstyle{plain}
\newtheorem{theorem}{Theorem}[section]
\newtheorem{corollary}[theorem]{Corollary}
\newtheorem{proposition}[theorem]{Proposition}
\newtheorem{lemma}[theorem]{Lemma}
\theoremstyle{definition}
\newtheorem{definition}[theorem]{Definition}
\newtheorem{definition-proposition}[theorem]{Definition-Proposition}
\newtheorem{example}[theorem]{Example}
\newtheorem{remark}[theorem]{Remark}
\newtheorem{notation}[theorem]{Notation}
\numberwithin{equation}{section}
\renewcommand{\emptyset}{\varnothing}
\newcommand{\Union}{\bigcup\limits}
\newcommand{\C}{\mathbb{C}}
\newcommand{\N}{\mathbb{N}}
\newcommand{\Z}{\mathbb{Z}}
\newcommand{\op}{\mathrm{op}}
\DeclareMathOperator{\id}{id}
\newcommand{\DSum}{\bigoplus}
\newcommand{\plim}[1][]{\mathop{\varprojlim}\limits_{#1}}
\renewcommand{\to}[1][]{\xrightarrow[]{#1}}
\newcommand{\isoto}[1][]{\xrightarrow[#1]{\sim}}
\newcommand{\isofrom}[1][]{\xleftarrow[#1]{\sim}}
\newcommand{\Endo}[1][]{\mathrm{End}_{\raise1.5ex\hbox to.1em{}#1}}
\newcommand{\Hom}[1][]{\mathrm{Hom}_{\raise1.5ex\hbox to.1em{}#1}}
\newcommand{\RHom}[1][]{\mathrm{RHom}_{\raise1.5ex\hbox to.1em{}#1}}
\newcommand{\Ext}[2][]{\mathrm{Ext}_{\raise1.5ex\hbox to.1em{}#1}^{#2}}
\newcommand{\THom}[1][]{\mathrm{THom}_{\raise1.5ex\hbox to.1em{}#1}}
\newcommand{\Mod}{\mathrm{Mod}}
\newcommand{\Tens}[1][]{\mathbin{\otimes_{\raise1.5ex\hbox to-.1em{}#1}}}
\newcommand{\LTens}[1][]{\mathbin{\otimes_{\raise1.5ex\hbox to-.1em{}#1}^{L}}}
\newcommand{\Tor}[2][]{\mathrm{Tor}^{\raise1.5ex\hbox to.1em{}#1}_{#2}}
\def\sha{\mathcal{A}}
\def\shb{\mathcal{B}}
\def\shd{\mathcal{D}}
\def\shh{\mathcal{H}}
\def\shl{\mathcal{L}}
\def\shm{\mathcal{M}}
\def\shn{\mathcal{N}}
\def\shp{\mathcal{P}}
\def\shs{\mathcal{S}}
\def\shu{\mathcal{U}}
\def\shv{\mathcal{V}}
\newcommand{\sect}{\varGamma}
\newcommand{\shendo}[1][]{{\mathcal{E}nd}_{\raise1.5ex\hbox to.1em{}#1}}
\newcommand{\shaut}[1][]{{\mathcal{A}ut}_{\raise1.5ex\hbox to.1em{}#1}}
\renewcommand{\hom}[1][]{{\mathcal{H}om}_{\raise1.5ex\hbox to.1em{}#1}}
\newcommand{\rhom}[1][]{{R\mathcal{H}om}_{\raise1.5ex\hbox to.1em{}#1}}
\newcommand{\ext}[2][]{{\mathcal{E}xt}_{\raise1.5ex\hbox to.1em{}#1}^{#2}}
\newcommand{\thom}[1][]{{T\mathcal{H}om}_{\raise1.5ex\hbox to.1em{}#1}}
\newcommand{\tens}[1][]{\mathbin{\otimes_{\raise1.5ex\hbox to-.1em{}#1}}}
\newcommand{\ltens}[1][]{\mathbin{\otimes_{\raise1.5ex\hbox to-.1em{}#1}^{L}}}
\newcommand{\tor}[2][]{{\mathcal{T}or}^{\raise1.5ex\hbox to.1em{}#1}_{#2}}
\newcommand{\oim}[1]{{#1}_*}
\newcommand{\opb}[1]{#1^{-1}}
\newcommand{\GHom}[1][]{\mathrm{GHom}_{\raise1.5ex\hbox to.1em{}#1}}
\newcommand{\GExt}[2][]{\mathrm{GExt}_{\raise1.5ex\hbox to.1em{}#1}^{#2}}
\newcommand{\FHom}[1][]{\mathrm{FHom}_{\raise1.5ex\hbox to.1em{}#1}}
\newcommand{\ghom}[1][]{{\mathcal{GH}om}_{\raise1.5ex\hbox to.1em{}#1}}
\newcommand{\gext}[2][]{{\mathcal{GE}xt}_{\raise1.5ex\hbox to.1em{}#1}^{#2}}
\newcommand{\fhom}[1][]{{\mathcal{FH}om}_{\raise1.5ex\hbox to.1em{}#1}}
\newcommand{\Gr}{\mathop{\mathrm{Gr}}\nolimits}
\newcommand{\tenstop}[1][]{\mathbin{\hat{\otimes}_{\raise1.5ex\hbox to-.1em{}#1}}}
\newcommand{\homtop}[1][]{\mathcal{L}_{\raise1.5ex\hbox to.1em{}#1}}
\newcommand{\Homtop}[1][]{\mathrm{L}_{\raise1.5ex\hbox to.1em{}#1}}
\newcommand{\D}{\mathcal{D}}
\newcommand{\E}{\mathcal{E}}
\renewcommand{\O}{\mathcal{O}}
\def\absdoim#1{\underline{#1}_*}
\def\reldoim[#1]#2{\underline{#2}_{|{#1}*}}
\def\doim{\@ifnextchar [{\reldoim}{\absdoim}}
\def\absdeim#1{\underline{#1}_*}
\def\reldeim[#1]#2{\underline{#2}_{|{#1}*}}
\def\deim{\@ifnextchar [{\reldeim}{\absdeim}}
\def\absdopb#1{\underline{#1}^{-1}}
\def\reldopb[#1]#2{\underline{#2}_{|{#1}}^{-1}}
\def\dopb{\@ifnextchar [{\reldopb}{\absdopb}}
\def\absboim#1{\underline{\underline{#1}}_*}
\def\relboim[#1]#2{\underline{\underline{#2}}_{|{#1}*}}
\def\boim{\@ifnextchar [{\relboim}{\absboim}}
\def\absbeim#1{\underline{\underline{#1}}_*}
\def\relbeim[#1]#2{\underline{\underline{#2}}_{|{#1}*}}
\def\beim{\@ifnextchar [{\relbeim}{\absbeim}}
\def\absbopb#1{\underline{\underline{#1}}^*}
\def\relbopb[#1]#2{\underline{\underline{#2}}_{|{#1}}^*}
\def\bopb{\@ifnextchar [{\relbopb}{\absbopb}}
\newcommand{\ad}{\operatorname{ad}}
\newcommand{\stack}[1]{\mathsf{#1}}
\newcommand{\stka}{\stack{A}}
\newcommand{\stke}{\stack{E}}
\newcommand{\stks}{\stack{S}}
\newcommand{\stkt}{\stack{T}}
\newcommand{\stkMod}[1][]{\stack{Mod}_{#1}}
\newcommand{\stkAut}[1][]{\stack{Aut}_{#1}}
\newcommand{\stkGr}{\stack{Gr}}
\newcommand{\astk}[1]{#1^+}
\newcommand{\approxto}[1][]{\xrightarrow[#1]{\approx}}
\begin{document}

\title[On quantizations of complex contact manifolds]
{On quantizations of complex contact manifolds}

\author[P. Polesello]{Pietro Polesello}
\address{Dipartimento di Matematica\\ 
Universit\`a di Padova\\ 
via Trieste 63\\
35121 Padova, Italy}
\email{pietro@math.unipd.it}

\thanks{The author is a member of GNAMPA of the Istituto Nazionale di Alta Matematica (INdAM). Research partially supported by Erwin Schr\"odinger Institute's JRF grant, by Padova University's grant CPDA122824/12 and by Fondazione Cariparo through the project \emph{Differential methods in Arithmetic, Geometry and Algebra}.}

\subjclass{53D55, 46L65, 32W25, 18D30}
\date{}

\keywords{Quantization, contact manifold,  microdifferential operators, stack}

\begin{abstract}
A (holomorphic) quantization of a complex contact manifold is a filtered algebroid stack which is locally equivalent to the ring $\E$ of microdifferential operators and which has trivial graded. The existence of a canonical quantization has been proved by Kashiwara. In this paper we consider the classification problem, showing that the above quantizations are classified by the first cohomology group with values in a certain sheaf of homogeneous forms. Secondly, we consider the problem of existence and classification for quantizations given by algebras.
\end{abstract}

\maketitle

\section*{Introduction}
This paper deals with the problem of existence and classification for the quantizations of complex contact manifolds. 

Precisely, consider a complex contact manifold $(Y, \alpha)$, \emph{i.e.} a 
complex manifold $Y$ endowed with a projective 1-form $\alpha$ (a global section of the projective cotangent bundle $P^*Y \to Y$) such that $d\alpha|_H$ is non-degenerate,  where $H$ denotes the codimension 1 sub-bundle of the tangent bundle associated to $\alpha$ via projective duality.
Let $\shl= \alpha^*\O_{P^*Y}(1)$ be the pull-back of the relative Serre sheaf on $P^*Y$: it is a locally free $\O_Y$-module of rank 1 endowed with a Lie bracket induced by $\alpha$ (the Lagrange bracket). Finally, denote by $\shs_Y$ the sheaf on $Y$ whose local sections are symbols, \emph{i.e.} series $\sum_{j=-\infty}^m f_j$ with  $f_j\in\shl^{\tens j}$
and satisfying some growth conditions. $\shs_Y$ is endowed with a natural filtration and the associated graded sheaf is the "homogeneous coordinates ring" $\O_Y^{hom}=\DSum_{m\in\Z} \shl^{\tens m}$.

A (holomorphic) quantization algebra on $(Y,\alpha)$ is a sheaf of algebras $\sha$ on $Y$ locally isomorphic to $\shs_Y$ as $\C_Y$-modules, in such a way that the product on $\shs_Y$ induced by that of $\sha$ is a formal bidifferential operator and is compatible both with the algebra structure on $\O_{Y}$ and with the Lie bracket on $\shl$. Such $\sha$ is naturally filtered with $\O_Y^{hom}$ as graded sheaf and the above compatibility conditions mean that $\Gr (\sha)\simeq\O_Y^{hom}$ is a graded algebra isomorphism commuting with the Poisson structures (induced by the commutator on the left-hand side and by the Lagrange bracket on the right-hand side).

Darboux's theorem for complex contact manifolds asserts that for any point $y\in Y$ there exists an open neighborhood $V$ of $y$ and a contact transformation $i\colon V \to P^*M$ for some complex manifold $M$ (here  $P^*M$ is endowed with the canonical contact structure given by the Liouville 1-form). Since any quantization algebra on $V$ is locally isomorphic through $i$ to the $\C$-algebra $\E_{P^*M}$ of Sato's microdifferential operators (a localization of the algebra $\shd_M$ of differential operators on $M$), it follows that the quantization algebras on $Y$ are nothing but $\E$-algebras, \emph{i.e.} $\C$-algebras locally isomorphic to $\opb i \E_{P^*M}$ for any Darboux chart $i\colon Y\supset V \to P^*M$.

In the case $Y=P^*M$, formal (\emph{i.e.} without growth conditions) $\E$-algebras were classified by Boutet de Monvel in~\cite{BoutetdeMonvel1999}.

For a complex contact manifold $Y$, the situation is more complicated, since $\E$-algebras may not exist globally. However Kashiwara in~\cite{Kashiwara1996} proved that there exists a canonical stack (sheaf of categories) of modules over locally defined $\E$-algebras. In fact, this stack is equivalent to the stack of modules over an algebroid stack $\stke_Y$ (see~\cite{Kontsevich2001,D'Agnolo-Polesello2005}). Moreover, $\stke_Y$ has the same properties of an $\E$-algebra: it is filtered, locally equivalent to an $\E$-algebra and it has $\O_Y^{hom}$ as associated graded stack. 

Hence, it makes sense to say that $\stke_Y$ is a (holomorphic) quantization of $Y$ and to replace 
$\E$-algebras by $\C$-linear stacks locally equivalent to $\stke_Y$. Being locally modeled on $\E$-algebras, these objects have locally trivial graded, but their associated graded stack in general is non-trivial.
Thus, we are lead to define a $(\E,\sigma)$-algebroid, playing the role of quantization of $Y$, as a filtered $\C$-linear stacks which is locally equivalent to $\stke_Y$ and which has $\O_Y^{hom}$ as associated graded stack.

In this paper we classify all $(\E,\sigma)$-algebroids on any complex contact manifold $Y$. This is done by means of the cohomology group $H^1(Y;\Omega_Y^{1,cl}(0))$, where $\Omega_Y^{1,cl}(0)$ denotes the push-forward of the sheaf of closed $0$-homogeneous 1-forms on the canonical symplectification of 
$Y$. Furthermore, we go into the study of the existence and classification of $\E$-algebras on $Y$, thus generalizing the results of~\cite{Kashiwara1996}  and~\cite{BoutetdeMonvel1999}. Finally, we compare the classification of $\E$-algebras with that of $(\E,\sigma)$-algebroids, focusing mainly on the formal case.

Note that in~\cite{Polesello2007} it is proved that the canonical $(\E,\sigma)$-algebroid $\stke_Y$ is the unique one endowed with an anti-involution corresponding to the operation of taking the formal adjoint of microdifferential operators, and in~\cite{D'Agnolo-Polesello2011} we classify (both up to equivalence and to Morita equivalence) $\C$-linear stacks locally equivalent to $\stke_Y$ without any assumption on their graded stack (called $\E$-algebroids in~\cite{D'Agnolo-Polesello2011}).

The problem of the quantization of the homogeneous coordinates sheaf have been considered 
in~\cite{Kontsevich2001,Kaledin2006} in the projective Poisson/symplectic case. On real contact manifolds or, more generally, real conic Poisson manifolds of constant rank, quantization algebras have been considered by Boutet de Monvel under the name of Toeplitz algebras (see~\cite{BoutetdeMonvel1995,BoutetdeMonvel2002}).

\medskip

This paper is organized as follows: 
in Section~\ref{section:contact} we recall the main definitions and properties of complex 
contact manifolds.
In Section~\ref{section:DefQuant} we review the definition of quantization algebras.
In Section~\ref{section:E-alg} we recall the classification of the $\E$-algebras on $P^*M$.
In Section~\ref{section:class} we define the $(\E,\sigma)$-algebroids on a complex contact manifold and classify them (Theorem~\ref{th:class}); we then focus on compact K\"ahler manifolds.
In Section~\ref{section:vs} we investigate the existence of $\E$-algebras on complex contact manifolds and compare their classification with that of $(\E,\sigma)$-algebroids, mainly in the formal case. 

\medskip
\noindent
{\bf Acknowledgements}
The author had the occasion of visiting Paris VI University and ESI - University of Vienna during the preparation of this paper. Their hospitality is gratefully acknowledged. 

The author also wishes to thank Louis Boutet de Monvel, Andrea D'Agnolo and Matthieu Carette for several useful discussions, comments and insights. 

\medskip
\noindent
{\bf Notation and conventions}
All the filtrations are over $\Z$, increasing and exhaustive. 
If $\shm$ is a filtered sheaf, we will denote by $\Gr(\shm)$ its associated graded sheaf, and by 
$\Gr_i(\shm)$ the sub-sheaf of elements of degree $i$.
We will use a similar notation for the graded morphism associated to a morphism of filtered sheaves.

All the algebra laws are associative.
If $\sha$ is a sheaf of algebras, we will denote by $\sha^\times$ the sheaf of groups of its 
invertible elements and, for each section $a\in \sha^\times$, by $\ad(a)\colon \sha \to \sha$ the algebra isomorphism $b\mapsto aba^{-1}$. 

If $Y$ is a complex manifold, $\Omega_Y^i$ will denote the sheaf of holomorphic $i$-forms and 
$\O_Y$ (resp. $\Theta_Y$) that of holomorphic functions (resp. vector fields). We also set 
$\Omega_Y=\Omega^{\dim Y}_Y$, the canonical bundle of $Y$. For each section $\zeta\in \Theta_Y$, the Lie derivative (resp. the contraction) along $\zeta$ is denoted by $L_\zeta$ (resp. $i_{\zeta}$). These are related to the exterior differential $d$ by the Cartan formula $L_\zeta = di_\zeta + d i_\zeta$.

\section{Complex contact manifolds}\label{section:contact}

We recall here the definition and some basic properties of complex contact 
manifolds. References are made to~\cite[Section 3.3]{S-K-K} 
and~\cite{LeBrun-Salamon1994,Beauville2007}.

\subsection{Complex contact structures}\label{subsection:contact}
Let $Y$ be a complex manifold and $\pi\colon P^*Y\to Y$ its projective 
cotangent bundle (here $P^*Y = \dot T^*Y/\C^{\times}$, where $\dot T^*Y$ denotes the cotangent bundle of $Y$ with the zero-section removed).
Recall that a projective 1-form on $Y$ is a global section of $\pi$. By projective duality, this corresponds to a hyperplane distribution, \emph{i.e.} a  codimension 1 sub-bundle of the tangent bundle $TY$.

Given a projective 1-form $\alpha\colon Y\to P^*Y$, consider the associated cartesian diagram
$$
\xymatrix@C3em{
U= Y\times_{P^*Y}UY\ar[d]^-{\gamma} \ar[r] 
& UY \ar[d] \\ 
Y \ar[r]_-{\alpha} & P^*Y}
$$
where $T^*Y \times_Y P^*Y \supset UY \to P^*Y$ is the tautological (or universal) line bundle.

By composing the morphism $U\to UY$  with the natural projection $UY\to T^*Y$, we get an injective morphism of vector bundles (denoted by the same symbol) $\alpha\colon U\to T^*Y$. 
Hence, the line bundle $\gamma\colon U\to Y$ measures the failure to lift the projective 1-form 
$\alpha$ to a nowhere vanishing 1-form on $Y$. Moreover, the chain of vector bundle morphisms
$$
U \to[\Delta] U\times_Y U\to[id_U\times\alpha] U \times_Y T^*Y \to[^t d\gamma] T^*U
$$
(here $\Delta$ is the diagonal map and $^t d\gamma\colon U \times_Y T^*Y \to T^*U$ denotes the morphism induced by the transpose of the differential $d\gamma_v \colon T_vU \to T_{\gamma(v)}Y$ of $\gamma$ at $v\in U$) defines a 1-form $\tilde\alpha$ on $U$ which never vanishes outside the zero-section. 

Let $eu$ be the infinitesimal generator of the $\C^\times$-action on $U$; then $L_{eu}\tilde\alpha=\tilde\alpha$ (that is, $\tilde\alpha$ is $1$-homogeneous) and $i_{eu}\tilde\alpha=0$. 

Let $\shh$ denote the sheaf of sections of the hyperplane distribution
associated  to $\alpha$ and $\O_{P^*Y}(-1)$ the sheaf of sections of the  the tautological line bundle on $P^*Y$.  Let $\O_{P^*Y}(1)$ be its dual and set $\shl=\alpha^*\O_{P^*Y}(1)$. Then $\shl^{\tens -1}$ is identified with the sheaf of sections of $\gamma \colon U \to Y$ and we get dual exact sequences of locally free $\O_Y$-modules
\begin{equation}
\label{eq:alpha}
\xymatrix{0 \ar[r] & \shh \ar[r] & \Theta_Y \ar[r]^-{^t\alpha} & \shl \ar[r] &  0\\ 
0 \ar[r] & \shl^{\tens -1} \ar[r]^-{\alpha} & \Omega^1_Y \ar[r] & \shh^{\tens -1}\ar[r] & 0}
\end{equation}

The differential $d\alpha\colon \shl^{\tens -1}\to \Omega^2_Y$ is by definition the composition of 
$\alpha$ with the exterior differential. 
One checks that for any $k \in \N$ the morphism $\alpha\wedge (d\alpha)^k \colon \shl^{\tens -(k+1)}\to \Omega_Y^{2k+1}$ is $\O_Y$-linear, hence it defines a global section of $\Omega^{2k+1}_Y\tens[\O_Y]\shl^{\tens (k+1)}$.

\begin{definition-proposition}\label{def-prop:contact form}
In the above situation, $\alpha$ is called a contact form if one of the following equivalent conditions is satisfied:
\begin{itemize}
\item[i)] The $\shl$-valued skew form $\shh \tens_\C \shh \to \shl, \quad (v, w) \mapsto {}^t\alpha ([v, w])$ is everywhere non-degenerate (here $[\cdot,\cdot]$ denotes the Lie bracket on $\Theta_Y$).
\item[ii)] $\dim Y=2n + 1$ and $\alpha \wedge (d\alpha)^n$ is a nowhere vanishing section of $\Omega_Y\tens[\O_Y]\shl^{\tens (n+1)}$.
\item[iii)] $\tilde\omega=d\tilde\alpha$ is a symplectic form on $\widetilde Y = 
U \setminus \{\mbox{zero-section}\}$.
\end{itemize}
\end{definition-proposition}

Note that 
ii) implies that $\shl^{\tens -(n+1)}\simeq\Omega_Y$ and 
from the Cartan formula it follows that $L_{eu}\tilde\omega=\tilde\omega$ (that is, $\tilde\omega$ is 1-homogeneous).

\begin{definition}\label{def:contact}
\begin{itemize}
\item[i)] A complex contact manifold is a complex manifold $Y$ endowed with a contact form 
$\alpha$.
If $\alpha$ lifts to a 1-form, the contact manifold $(Y,\alpha)$ is called exact.
\item[ii)] A contact transformation $\varphi\colon (Y,\alpha) \to (X,\beta)$ between complex contact manifolds of the same dimension is a morphism of complex manifolds satisfying\footnote{Note that, in general the pull-back $\varphi^*(\beta)$ is defined only when $\{(y,\beta \circ \varphi(y)) ; \ y\in Y \}\cap P^*_Y X = \emptyset$, where $P^*_Y X$ denotes the projective conormal bundle of $Y$.} 
$\varphi^*(\beta)=\alpha$.
\end{itemize}
\end{definition}

Given a complex contact manifold $(Y,\alpha)$, we will refer to $\shl$ (resp. $\shh$) as the contact line bundle (resp. contact distribution), and to the principal $\C^\times$-bundle $\gamma\colon \widetilde Y \to Y$ together with the 2-form $\tilde\omega$ on $\widetilde Y$, as the canonical symplectification. Note that $(Y,\alpha)$ is exact if and only if $\shl$ is trivial or, equivalently, if $\gamma$ has a global section. Note also that any contact transformation $\varphi\colon Y \to X$ is a local biholomorphism and lifts to a homogeneous symplectic transformation $\tilde\varphi\colon \widetilde Y \to\widetilde X$. 

\begin{remark}\label{rk:1-dim} 
We include in the definition all $1$-dimensional complex manifolds as degenerate case.
Indeed, if $\dim Y =1$ then $Y$ is identified with $P^*Y$, hence it has a canonical contact form
(which is in fact unique), whose contact line bundle is the sheaf $\Theta_Y$ of vector fields and the contact distribution nothing but its zero-section. Note that $Y$ is exact if and only if it admits a nowhere vanishing 1-form, hence if and only if it is either non-compact or compact of genus 1.
\end{remark}

\begin{example}\label{ex:contact}
(i) The odd-dimensional complex projective space $\mathbb P^{2n+1}$ is a complex contact manifold, whose contact line bundle is $\O_{\mathbb P^{2n+1}}(2)$. Any 2-homogeneous symplectic form 
$\omega$ on $\C^{2n+2}$ defines a contact form on $\mathbb P^{2n+1}$ by pushing-down the 
1-form $\frac 12 i_{eu}\omega$, for $eu$ the Euler vector field. The canonical symplectification is thus 
$\C^{2n+2}\setminus \{0\}\to \mathbb P^{2n+1}$ with symplectic form $d (\frac 12 i_{eu}\omega) = \omega$. In homogeneous coordinates $[x_0,\dots,x_{2n+1}]$, the symplectic form 
$\omega = \sum_{i=0}^{n} dx_{2i}\wedge dx_{2i+1}$ defines the contact form
$\alpha=\frac 12 \sum_{i=0}^{n} (x_{2i}dx_{2i+1}-x_{2i+1}dx_{2i})$.

(ii) Let $M$ be a complex manifold. Then $P^*M$ is a complex contact manifold of dimension $2 \dim M+1$, with canonical contact form $\lambda$
obtained from the morphism
$$
^t\lambda\colon \Theta_{P^*M} \to[d\pi] \pi^*\Theta_M \to \O_{P^*M}(1) 
$$
where $d\pi$ is induced by the differential of $\pi\colon P^*M \to M$ and the right arrow by the relative Euler sequence.
The contact line bundle is $\O_{P^*M}(1)$ and
the canonical symplectification is thus
$\dot T^*M \to P^*M$, endowed with the symplectic structure induced by the Liouville form.

(iii) Let $J^1N=T^*N\times \C$ be the 1-jet bundle of an $n$-dimensional complex manifold $N$. 
Let $(t,\tau)$ be the system of homogeneous symplectic coordinates on $T^*\C$. Then $J^1 N$ is identified with the open subset of $P^*(N\times \C)$ defined by $\tau\neq 0$, hence it is endowed with the contact form induced by that of $P^*(N\times \C)$. 
Note that $J^1N$ is exact, a lift of the contact form being given by $\beta = \rho^*(\lambda) + dt$, where $\rho\colon J^1N\to T^*N$ denotes the projection and $\lambda$ the Liouville form on $T^*N$.
 
More generally, if $(X,\omega)$ is a complex symplectic manifold with $[\omega]=0$ in 
$H^2(X;\C_X)$, there exists a principal $\C$-bundle $\rho\colon Y \to X$ and a 1-form $\beta$ on $Y$ such that $d\beta=\rho^*(\omega)$ and $i_v(\beta)=1$, for $v$ the infinitesimal generator of the $\C$-action. 
Then $\beta$ descends to a contact form on $Y$. Such exact contact manifold is called a contactification of  $(X,\omega)$.
\end{example}

Recall that Darboux's theorem for complex contact manifolds asserts that a local model 
for a contact manifold $Y$ is an open subset of $P^*M$ for a complex manifold $M$ (see for example ~\cite[Section 3.3]{S-K-K}). More precisely, for any point $y\in Y$ there exist an open neighborhood $V$ of $y$, a complex manifold $M$ and a contact transformation $i\colon V \to P^*M$.
We call the pair $(V,i)$ a Darboux chart.

\subsection{Lagrange brackets and homogeneous Poisson structures}
Let $(Y,\alpha)$ be a complex contact manifold, with contact line bundle $\shl$, contact distribution 
$\shh$ and canonical symplectification  $\gamma\colon \widetilde Y\to Y$.

We denote by $\Theta_Y^c\subset \Theta_Y$ the sub-sheaf of contact vector fields, that is, of those
vector fields $v\in \Theta_Y$ such that $L_v$ preserves $\shh$ (\emph{i.e.} $L_v(w)=[v,w]\in\shh$ 
for every $w\in \shh$).
By the Jacobi identity, it follows that $\Theta_Y^c$ is closed under the Lie bracket, so that it inherits 
a Lie algebra structure. 
Note that a vector field is a contact vector field if and only if its local flow consists in contact transformations.

\begin{proposition}
The morphism $^t\alpha \colon \Theta_Y \to \shl$ in~\eqref{eq:alpha} restricts to an isomorphism of 
$\C_Y$-modules $\Theta_Y^c\isoto \shl$.
\end{proposition}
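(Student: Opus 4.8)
The plan is to show that $^t\alpha$ restricted to $\Theta^c_Y$ is injective and surjective onto $\shl$ by working fibrewise, using the non-degeneracy of the contact form from Definition-Proposition~\ref{def-prop:contact form}.

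First I would check injectivity. If $v\in \Theta^c_Y$ satisfies $^t\alpha(v)=0$, then by the first exact sequence in~\eqref{eq:alpha} we have $v\in\shh$. But then for every $w\in\shh$ we get $^t\alpha([v,w])\in$ the image of $\shh$ in $\shl$, which is zero; hence $v$ pairs trivially with all of $\shh$ under the $\shl$-valued skew form of condition i). Non-degeneracy of that form forces $v=0$. So the restriction is injective, and it remains to prove surjectivity, i.e. that every local section of $\shl$ is $^t\alpha(v)$ for some \emph{contact} vector field $v$. The morphism $^t\alpha\colon\Theta_Y\to\shl$ is already surjective by~\eqref{eq:alpha}, so the content is that the lift can be chosen inside $\Theta^c_Y$, and that this lift is unique (which is exactly the injectivity just proved, applied to the difference of two lifts).

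For surjectivity I would argue locally. Fix $\ell\in\shl$ near a point and pick any $v_0\in\Theta_Y$ with $^t\alpha(v_0)=\ell$; the ambiguity in $v_0$ is a section of $\shh$. I want to correct $v_0$ by an element of $\shh$ so that the result is a contact vector field. Equivalently, I must show that the obstruction map $\shh\to \hom[\O_Y](\shh,\shl)$ sending a correction $h$ to the defect $w\mapsto {}^t\alpha([v_0+h,w])$ for $w\in\shh$ (composed projection to $\shl$) can be made to vanish. Because $^t\alpha([h,w])$ for $h,w\in\shh$ is, up to the $\O_Y$-linear skew pairing of i), controlled by $d\alpha|_H$, the dependence of the defect on $h$ is governed precisely by the non-degenerate form of condition ii)/i); so the defect, which a priori is an $\O_Y$-linear functional $\shh\to\shl$ (one checks the Leibniz terms cancel because $v_0$ already satisfies $^t\alpha(v_0)=\ell$ and $w\in\shh=\ker{}^t\alpha$), lies in the image of the isomorphism induced by non-degeneracy, hence can be cancelled by a unique choice of $h$. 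This produces a contact vector field $v=v_0+h$ with $^t\alpha(v)=\ell$.

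The main obstacle I expect is the bookkeeping in the last step: verifying that the "defect" functional $w\mapsto$ (class of $[v,w]$ in $\shl$) is genuinely $\O_Y$-linear in $w$ — the naive Leibniz computation produces a term involving $w(f)\cdot{}^t\alpha(v)$ which must be shown to drop out because we are projecting to $\shh^{\otimes -1}$ relative to $\shl$, or rather using that $w$ is a section of $\shh$ — and then identifying this linear functional, via the duality $\shh^{\tens -1}\to\Omega^1_Y$ from~\eqref{eq:alpha} and the $\O_Y$-linearity of $d\alpha|_H$ noted before the Definition-Proposition, with an element of $\shl\tens\hom[\O_Y](\shh,\O_Y)\simeq\hom[\O_Y](\shh,\shl)$ that is hit by the non-degenerate pairing. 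Once that identification is in place, surjectivity is immediate, and gluing the local (unique) lifts gives a global isomorphism $\Theta^c_Y\isoto\shl$ of $\C$-modules, completing the proof. An alternative, possibly cleaner route is to pass to the symplectification: pull back to $\widetilde Y$, where contact vector fields on $Y$ correspond to $1$-homogeneous symplectic (Hamiltonian, since $\tilde\omega=d\tilde\alpha$ is exact) vector fields, which correspond bijectively to $1$-homogeneous functions on $\widetilde Y$, i.e. to sections of $\gamma_*\O_{\widetilde Y}$ in homogeneity degree $1$, which is exactly $\shl$; I would likely present whichever of the two is shorter, but the fibrewise non-degeneracy argument is the self-contained one.
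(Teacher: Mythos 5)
Your proposal is correct and follows essentially the same route as the paper's proof: injectivity from the non-degeneracy of the $\shl$-valued skew form on $\shh$, and surjectivity by lifting $l$ locally to some $v_0\in\Theta_Y$ and correcting it by the unique section of $\shh$ supplied by non-degeneracy, after checking that the defect $w\mapsto{}^t\alpha([v_0,w])$ is $\O_Y$-linear on $\shh$ (the symplectification route you sketch is also the mechanism the paper uses later, via $R_l=\gamma_*H_{dl}$, but not in this proof). One small bookkeeping slip: the Leibniz term to discard is $v_0(f)\,{}^t\alpha(w)$ rather than $w(f)\,{}^t\alpha(v_0)$, and it vanishes precisely because ${}^t\alpha(w)=0$ for $w\in\shh$, which is the reason you in fact invoke.
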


\begin{proof}
By the exact sequence~\eqref{eq:alpha}, $v\in\Theta_Y$ is a contact vector field if and only if 
$^t\alpha([v,w])=0$ for every $w\in \shh$. Since by Definition-Proposition~\ref{def-prop:contact form}~(i) the skew form $^t\alpha([\cdot,\cdot])$ on $\shh$ is non-degenerate, a contact vector field $v$ belongs to 
$\shh$ if and only if $v=0$. Hence $^t\alpha$ is injective on $\Theta_Y^c$.

Take $l\in\shl$. Since $^t\alpha \colon \Theta_Y \to \shl$ is surjective, locally there exists $u\in\Theta_Y$ such that $^t\alpha (u)=l$. Consider the map $\shh \to \shl, \quad w\mapsto {}^t\alpha([u,w])$. 
It is $\O_Y$-linear, hence there exists a unique section $u'\in \shh$ such that $^t\alpha([u,\cdot])= {}^t\alpha([u',\cdot])$ on $\shh$. It follows that $^t\alpha([u-u',w])=0$ for every $w\in \shh$, that is $u-u'\in \Theta_Y^c$, and $^t\alpha(u-u')=l$. 
\end{proof}
Denote by
$$
R\colon \shl \to \Theta_Y^c, \quad l\mapsto R_l
$$
the inverse of $^t\alpha|_{\Theta_Y^c}$ ($R_l$ is known as the Reeb field of $l$).
This defines a $\C$-linear splitting of the exact sequence~\eqref{eq:alpha}.

Let  $l,m\in\shl$ and set 
$$
\{l,m\}= {}^t\alpha ([R_l,R_m])
$$
Then $\{\cdot,\cdot\}$ defines a Lie bracket on $\shl$ (the Lagrange bracket), and $^t\alpha$ 
and $R$ thus become isomorphisms of sheaves of complex Lie algebras. Note that one has $\{l,fm\}=L_{R_l}(f)m + f\{l,m\}$ for any $f\in\O_Y$.

Set
$$
\O_Y^{hom}=\DSum_{m\in\Z} \shl^{\tens m}
$$ 
This is the graded algebra of "homogeneous coordinates", since there is an identification
$\O_Y^{hom}= \gamma_* \O_{(\widetilde Y)}$, where $\O_{(\widetilde Y)}\subset
\O_{\widetilde Y}$ is the sub-sheaf of holomorphic functions on $\widetilde Y$ which are rational along the fiber.
It has a natural Poisson algebra structure defined by extending the Lagrange bracket  $\{\cdot,\cdot\}$ on $\shl$ as a derivation on each argument. 
It is of degree $-1$ ({\em i.e.} $\deg \{l,m\} = \deg l + \deg m -1$) and coincides with the Poisson structure induced by that on $\O_{\widetilde Y}$ associated to the symplectic form $\widetilde \omega$. In particular, one has $\{l,f\}=L_{R_l}(f)$ for any $l\in\shl$ and $f\in\O_Y$. 

\begin{notation}
For any $k\in \Z$, we denote by $Der^P_{\C}(\O_Y^{hom})(k)$ the sheaf of $\C$-derivations of $\O_Y^{hom}$ which are graded of degree $k$ and preserve the Poisson structure\footnote{Recall that a derivation $\delta\colon \O_Y^{hom}\to \O_Y^{hom}$ is graded of degree $k$ if $\delta(\shl^{\tens m})\subset \shl^{\tens m+ k}$ for any $m\in\Z$, and it preserves the Poisson structure if $\delta(\{f,g\})=\{\delta(f),g\}+\{f,\delta(g)\}$ for any $f,g\in  \O_Y^{hom}$.}, by $\Theta^s_{\widetilde Y}(k)$ the sheaf of $k$-homogeneous symplectic vector fields\footnote{Recall that a vector field $v$ on $({\widetilde Y},\tilde\omega)$ is symplectic if $L_{v}\tilde\omega=0$ and it is $k$-homogeneous if $L_{eu}v=[eu,v]=kv$.} on $\widetilde Y$ and by $\Omega_{\widetilde Y}^i(k)$ that of $k$-homogeneous $i$-forms on 
$\widetilde Y$ ({\em i.e.} those satisfying $L_{eu}(\eta)=k\eta$). We also set
$$
\Theta^s_Y(k)=\gamma_*\Theta^s_{\widetilde Y}(k), \quad 
\Omega_Y^i(k)=\gamma_*\Omega_{\widetilde Y}^i(k)
$$ 
\end{notation}

Note that $\Theta^s_Y(0)=\Theta^c_Y$ and there are isomorphisms of $\C_Y$-modules 
\begin{equation}
\label{pr:GradDer}
H\colon \Omega_Y^{1,cl}(k+1)\isoto \Theta^s_{Y}(k), \quad 
L\colon\Theta^s_Y(k)\isoto Der^P_{\C}(\O_Y^{hom})(k)
\end{equation}
induced by the the Hamiltonian isomorphism on $\widetilde Y$ and by the Lie derivative, respectively
(here  the upper index $cl$ denotes closed forms).

By identifying $\shl^{\tens k}$ with $\Omega^0_Y(k)$ and by using the Cartan formula,
one gets for any $k\neq 0$ an isomorphism of $\C_Y$-modules
$$
d\colon\shl^{\tens k}\isoto\Omega_Y^{1,cl}(k)
$$
In particular, for $k=1$, it follows that the contact vector field $R_l$ associated to $l\in \shl$ is obtained by pushing down to $Y$ the symplectic vector field $H_{dl}$ on $\widetilde Y$. 

For $k = 0$ one has $0 = d i_{eu}\eta + i_{eu} d\eta$, hence there is an exact sequence
$$
0 \to \C_Y \to \O_Y \to[d] \Omega_Y^{1,cl}(0) \to[i_{eu}]\C_Y \to 0
$$
Together with the exact sequence
\begin{equation}\label{eq:deRham}
0 \to \C_Y \to \O_Y \to[d] \Omega_Y^{1,cl} \to 0
\end{equation} 
it gives rise to the exact sequence
\begin{equation}\label{eq:OmegaY}
0 \to \Omega_Y^{1,cl} \to[\gamma^*] \Omega_{Y}^{1,cl}(0) \to[i_{eu}] \C_Y \to 0
\end{equation}
(here $\gamma^*$ denotes the pull-back via $\gamma \colon \widetilde Y\to Y$).

\begin{lemma}\label{lemma:Atiyah}
The class of the extension of $\C_Y$-modules~\eqref{eq:OmegaY} is the Atiyah class\footnote{Recall that the Atiyah map $a\colon H^1(Y;\O_Y^\times)\to H^1(Y;\Omega_Y^1)$ induced by $d\log$ factors through $H^1(Y;\Omega_Y^{1,cl})$.} $-a(\shl)\in H^1(Y;\Omega_Y^{1,cl})$.
\end{lemma}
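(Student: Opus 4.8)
I would identify the extension~\eqref{eq:OmegaY} with the Atiyah extension by computing both in terms of an explicit local trivialization of $\shl$, hence of the $\C^\times$-bundle $\gamma\colon\widetilde Y\to Y$.  Fix an open cover $\{U_i\}$ of $Y$ on which $\shl$ is trivial, say generated by nowhere vanishing sections $l_i\in\shl(U_i)$, with transition functions $g_{ij}\in\O_Y^\times(U_{ij})$ defined by $l_i=g_{ij}\,l_j$.  By construction the Atiyah class $a(\shl)\in H^1(Y;\Omega_Y^{1,cl})$ is represented by the \v Cech $1$-cocycle $(d\log g_{ij})_{ij}=(g_{ij}^{-1}dg_{ij})_{ij}$; this is the image in $\Omega_Y^{1,cl}$ of the usual cocycle $(d\log g_{ij})$ coming from $\Pic$, and it is closed.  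The task is to show that the extension class of~\eqref{eq:OmegaY} is represented by the same cocycle.

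**Key steps.**  First I would spell out the middle term: by the identification $\O_Y^{hom}=\gamma_*\O_{[\widetilde Y]}$ and $\Omega_Y^i(k)=\gamma_*\Omega_{\widetilde Y}^i(k)$, a section of $\Omega_Y^{1,cl}(0)$ over $U$ is a closed, $0$-homogeneous $1$-form on $\opb\gamma(U)\subset\widetilde Y$, and the map $i_{eu}$ is contraction with the Euler field, landing in $\C_Y$ because contracting a closed $0$-homogeneous form with $eu$ gives a locally constant function (Cartan: $d\,i_{eu}\beta=L_{eu}\beta-i_{eu}d\beta=0$).  Its kernel is exactly the forms $\gamma^*\eta$ with $\eta\in\Omega_Y^{1,cl}$, which is the left-hand term; so~\eqref{eq:OmegaY} is genuinely an extension of $\C_Y$ by $\Omega_Y^{1,cl}$.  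Next, over each $U_i$ the trivialization $l_i$ gives a section $s_i\colon U_i\to\widetilde Y$ of $\gamma$ (since $\shl^{\tens-1}$ is the sheaf of sections of $U\to Y$ and $l_i$ picks out a nowhere vanishing one), hence a fiber coordinate $w_i$ on $\opb\gamma(U_i)$ with $s_i=\{w_i=1\}$ and $eu=w_i\partial_{w_i}$.  Then $\frac{dw_i}{w_i}$ is a closed $0$-homogeneous $1$-form on $\opb\gamma(U_i)$ with $i_{eu}\tfrac{dw_i}{w_i}=1$, i.e.\ a local splitting of $i_{eu}$ in~\eqref{eq:OmegaY}.  On the overlap the two fiber coordinates are related by $w_i=\opb\gamma(g_{ij})\,w_j$ (this is precisely the meaning of $l_i=g_{ij}l_j$, up to checking the variance convention), so
$$
\frac{dw_i}{w_i}-\frac{dw_j}{w_j}=\frac{d\,\opb\gamma(g_{ij})}{\opb\gamma(g_{ij})}=\gamma^*\!\bigl(d\log g_{ij}\bigr).
$$
Thus the \v Cech cocycle measuring the failure of the local splittings to glue is $(\gamma^*d\log g_{ij})_{ij}$, which under the inclusion $\gamma^*\colon\Omega_Y^{1,cl}\hookrightarrow\Omega_Y^{1,cl}(0)$ is the image of $(d\log g_{ij})_{ij}$; hence the extension class of~\eqref{eq:OmegaY} equals $a(\shl)$.

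**Main obstacle.**  I expect the routine parts (exactness of~\eqref{eq:OmegaY}, the Cartan-formula computations) to be unproblematic, since~\eqref{eq:OmegaY} is already asserted in the excerpt.  The one point needing genuine care is matching conventions: which duality/normalization makes $w_i/w_j=\opb\gamma(g_{ij})$ rather than its inverse, and correspondingly that the cocycle for $a(\shl)$ is $(d\log g_{ij})$ and not $(-d\log g_{ij})$ — a sign here would give $a(\shl^{\tens-1})=-a(\shl)$ instead.  I would therefore pin down at the outset that $\shl=\alpha^*\O_{P^*Y}(1)$ and $\shl^{\tens-1}$ is the sheaf of sections of $\gamma\colon U\to Y$, trace a section $l$ of $\shl$ to the dual fiber coordinate, and check against the normalization $i_{eu}\tilde\alpha=0$, $L_{eu}\tilde\alpha=\tilde\alpha$ used earlier; once the two cocycles are seen to be literally equal (not merely cohomologous), the identification of classes in $H^1(Y;\Omega_Y^{1,cl})$ is immediate.
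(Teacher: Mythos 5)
Your proof is correct, but it follows a genuinely different route from the paper's. The paper avoids local trivializations altogether: it observes that a section $\beta$ of $\Omega_{Y}^{1,cl}(0)$ with $i_{eu}\beta=1$ is the same thing as a flat connection on $\shl$ (the operator $d+\beta\wedge\cdot$ descends to $\shl$), so the $\Omega_Y^{1,cl}$-torsor of splittings of~\eqref{eq:OmegaY} is identified with the torsor of flat connections of $\shl$, whose class is precisely $a(\shl)$; as written, the paper only spells out the resulting equivalence ``the sequence splits iff $a(\shl)=0$'', the equality of classes being implicit in this torsor identification. Your \v{C}ech computation with transition functions $g_{ij}$ and fiber coordinates $w_i$ instead establishes the equality of classes directly at the cocycle level, at the price of the normalization check you rightly single out (note also that $l_i$ is a section of $\shl$, so it is its dual, a nowhere vanishing section of $\shl^{\tens -1}$, that gives the section $s_i$ of $\gamma$ --- exactly the variance point you flag; with the convention $v=w_i(v)\,s_i(\gamma(v))$ one indeed gets $w_i=\opb{\gamma}(g_{ij})w_j$ and the cocycle $\gamma^*(d\log g_{ij})$). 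The paper's formulation is shorter and sits closer to the definition of $a(\shl)$ as the obstruction to a flat holomorphic connection, while yours is more elementary, exhibits the representing cocycle explicitly, and re-verifies the exactness of~\eqref{eq:OmegaY} along the way; both are valid proofs of the lemma.
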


\begin{proof}
First, note that~\eqref{eq:OmegaY} is a sub-sequence of the exact sequence
$$
0 \to \Omega_Y^{1} \to[\gamma^*] \Omega_{Y}^{1}(0) \to[i_{eu}] \O_Y \to 0
$$
After tensoring it with $\shl$, this becomes isomorphic to the first jet sequence of $\shl$, whose extension class\footnote{Here we use Atiyah's original definition, which differs by a sign to that in more recent literature, as for example in~\cite{Kapranov1999}.} is $-a(\shl)\in Ext^1_{\O_Y}(\shl,\Omega_Y^1\tens[\O_Y]\shl)\simeq H^1(Y;\Omega_Y^1)$. Recall that this may be realized as the class of the $\Omega^1$-torsor of the connections on $\shl$. Then, as an element of $H^1(Y;\Omega_Y^{1,cl})$, it is realized as the $\Omega_Y^{1,cl}$-torsor of the flat connections on $\shl$.

The result then follows, since any splitting of the sequence~\eqref{eq:OmegaY} is given by a global section $\beta\in \Omega_Y^{1,cl}(0)$ satisfying $i_{eu}\beta=1$, \emph{i.e.} such that $d + \beta\wedge\cdot$ descends to a flat connection on $\shl$.
\end{proof}

\section{Quantization algebras}\label{section:DefQuant}

We review here the definition of a quantization algebra on a complex contact manifold (called star algebras in~\cite{BoutetdeMonvel1999}).

Let $Y$ be a complex contact manifold, $\gamma\colon \widetilde Y \to Y$ its canonical symplectification and $\shl$ the associated contact line bundle.

For $m\in \Z$, let 
$$
F_m\widehat\shs_Y= \prod_{j=-\infty}^m\shl^{\tens j}
$$ 
be the sheaf of formal symbols of degree $\leq m$, \emph{i.e.} series $\sum_{j=-\infty}^m f_j$ with  $f_j\in\shl^{\tens j}$, and set 
$$
\widehat\shs_Y=\Union_{m\in\Z}F_m\widehat\shs_Y
$$
We denote by $\shs_Y\subset\widehat\shs_Y$ the sub-sheaf of (holomorphic) symbols, \emph{i.e.} symbols which, on an open subset $U\subset Y$ where they are defined, are subject to the estimates
\begin{equation}\label{eq:estmicrod}
     \left\{ \begin{array}{l}
     \mbox{for any compact subset $K \subset\opb \gamma (U)$ there exists a constant}\\
     \mbox{$C_K>0$ such that }
     \sup\limits_{K}\vert f_{j}\vert \leq C_K^{-j}(-j)! \mbox{ for all $j<0$.}
     \end{array}\right.
\end{equation}
(Here and in the sequel, we shall identify $\shl^{\tens j}$ with the sub-sheaf of $\oim \gamma \O_{\widetilde Y}$ of $j$-homogeneous functions.)

The sheaf $\shs_Y$ inherits a filtration from that of $\widehat\shs_Y$ and one has
$\plim[m\in \N] \shs_Y/F_{-m}\shs_Y \simeq \widehat \shs_Y$. Hence
$$
\Gr(\shs_Y)=\Gr(\widehat\shs_Y)=\O^{hom}_Y
$$

\begin{remark}
Since the $\O_Y$-module $\shl$ is locally free of rank $1$, the algebra $F_0\widehat\shs_Y$ is nothing but the completion of the symmetric algebra of $\shl^{\tens -1}$. Recall from Section~\ref{subsection:contact} that $U \to Y$ denotes the line bundle whose sheaf of sections is 
$\shl^{\tens -1}$ and let $U^* \to Y$ be its dual. Then $F_0\widehat\shs_Y$ is identified with the sheaf 
$\O_{U^*}\hat |_Y$, the formal completion of $\O_{U^*}$ along $Y$ (regarded as the zero section),
and its sections are thus the holomorphic functions in the formal neighborhood of $Y$ in $U^*$. 

Recall that the Borel-Laplace transform defines an isomorphism of $\C_Y$-modules
$ \O_{U^*}\hat |_Y \isoto   \O_{U}\hat |_Y$.
Via the previous identification, we get an isomorphism of $\C_Y$-modules 
$$
B\colon F_0\widehat\shs_Y \isoto   \O_{U}\hat |_Y
$$ 
In local coordinates $(y;\tau)\in \widetilde Y = U \setminus \{\mbox{zero-section}\}$,  where $\tau$ is the linear coordinate on the fiber, a section of $F_0\widehat\shs_Y$ is written as $ f(y;\tau)  =\sum_{j\leq0}  f_j(y) \tau^j$ with $f_j\in\O_Y$ and
$$
B(f)=\frac{1}{2\pi i}\int_\gamma  f(y;\tau) \frac{e^{t\tau}}{\tau} d\tau=\sum_{j\geq0}  f_{-j}(y) \frac{t^j}{j!}
$$
for $\gamma$ a counterclockwise oriented circle around 0 in $\C$. 
Then $B$ restricts to an isomorphism $F_0\shs_Y\isoto\O_{U}|_Y$, since by~\eqref{eq:estmicrod} for any  compact subset $K\subset \opb \gamma (U)$ the series $\sum_{j\geq0}  \sup\limits_{K}\vert f_{-j}\vert \frac{C_K^j}{j!}$ converges.
\end{remark}

For any $k\in \Z$, let $\D_{\widetilde Y}(k)=\{D\in \D_{\widetilde Y}; \, [L_{eu},D]=kD\}$ denote the sheaf of $k$-homogeneous linear differential operators on $\widetilde Y$, and set 
$$
\D_{Y}(k)=\gamma_*\D_{\widetilde Y}(k)
$$
Note that any operator $D\in \D_Y(k)$ defines a $\C_Y$-linear morphism $\shl^{\tens j}\to \shl^{\tens j+k}$ for any $j\in\Z$. In local coordinates $(y;\tau)\in \widetilde Y$, where $\tau$ is the linear coordinate on the fiber, $D$ is written as a finite sum 
$$
\tau^k\sum_{i\in \N, \alpha\in \N^{\dim Y}}  a_{i,\alpha}(y) (\tau\partial_\tau)^i\partial_y^\alpha
$$ 
with  $a_{i,\alpha}\in\O_Y$.

We denote by $\shd_{\shs_Y}$ the sheaf of rings of homogeneous differential operators on $\shs_Y$. 
Its sections are $\C$-linear morphism $D\colon \shs_Y \to \shs_Y$ which can be written as $D= \sum_{j=-\infty}^m D_j$ with $D_j\in \D_{Y}(j)$.

An algebra law $\star$ on $\shs_Y$ is said differential (resp. unitary) if  for any $f\in \shs_Y$ both $f \star \cdot$ and $\cdot \star f$ belong to $\shd_{\shs_Y}$ (resp. $f \star 1 = f = 1\star f$ with 
$1\in \O_Y$). This amounts to say that $\star = \sum_{j=-\infty}^m B_j$ with 
$B_j\in \DSum_{i+k=j} \D_{Y}(i)\tens[\O_Y]\D_Y(k)$ for any $j\leq m$\footnote{In fact, since $\star$ is associative, $B_m$ is necessarily the multiplication by some $b\in \shl^{\tens m}$.}
(resp. $B_0(\cdot, 1)=1=B_0(1,\cdot)$ and $B_j(\cdot,1)=0=B_j(1,\cdot)$ for $j \neq 0$).

If an algebra law $\star = \sum_{j\leq m} B_j$ on $\shs_Y$ is differential and unitary, then $B_0$ is the point-wise multiplication and $B_j$ has no term of order $0$ for $j \neq 0$ (hence it vanishes for $j>0$). 
In particular, $(\shs_Y,\star)$ is a filtered $\C$-algebra and there is an identification of graded algebras $\Gr(\shs_Y,\star)=\O_Y^{hom}$.

\begin{lemma}\label{lemma:diff}
Let $\star_1,\star_2$ be differential and unitary algebra laws on $\shs_Y$.
Then any filtered $\C$-algebra morphism $D\colon (\shs_Y,\star_1) \to (\shs_Y,\star_2)$ is given by 
$D=1+ \sum_{i=-\infty}^{-1} D_i\in \D_{\shs_Y}$ with $D_i$ having no term of order $0$.
\end{lemma}

\begin{proof}
For any $f \in \O_Y\subset F_0\shs_Y$, write $D(f) =  \sum_{i=-\infty}^0D_i(f)$ with $D_i(f)\in \shl^{\tens i}$. Then $D_0$ defines a $\C$-algebra endomorphism of $\O_Y$, which is necessarily the identity (this is well-known, see for example~\cite[Lemma 2.1.15]{Kashiwara-Schapira2013} for a proof). Hence $D = 1+ \sum_{i\leq-1} D_i$.

Write $\star_l = \sum_{j\leq 0}B^l_j$ with $B^l_j\in \DSum_{i+k=j} \D_{Y}(i)\tens[\O_Y]\D_Y(k)$ for $l=1,2$.
For any $f \in \O_Y$ we have $D(f\star_1\cdot) = D(f)\star_2 D(\cdot)$, hence
$$
\sum_{i + j =  m} D_i(B^1_j(f,\cdot)) = \sum_{i+j+k=m} B^2_j(D_i(f),D_k(\cdot)) \quad \text{for any } m\leq 0.
$$
Suppose that $D_i\in \D_{Y}(i)$ for all $m < i <0$. 
Then
$$
D_m(B^1_0(f,\cdot)) - B^2_0(D_m(f),\cdot) - B^2_0(f,D_m(\cdot)) \in \D_{Y}(m)
$$
Since both $B^1_0(\cdot,\cdot)$ and $B^2_0(\cdot,\cdot)$ are the point-wise multiplication, we get 
$$
[D_m,  f] - D_m(f) \in \D_{Y}(m)
$$
that is, $[D_m,  f]  \in \D_{Y}(m)$ for any $f\in \O_Y$. It follows (see for example~\cite[Lemma 2.2.4]{Kashiwara-Schapira2013}) that $D_m \in \D_{Y}(m)$. By induction, we get that $D_i\in \D_{Y}(i)$ for any $i<0$. 

Moreover, from $D(1)=1$ it follows that $D_i(1)=0$ for any $i<0$.
\end{proof}

\begin{definition}
A (holomorphic) quantization algebra on $Y$ is a sheaf of $\C$-algebras $\sha$ on $Y$ 
which is locally isomorphic to $\shs_Y$ as $\C_Y$-modules, in such a way that the
algebra law on $\shs_Y$ induced by that of $\sha$ is differential and unitary, and the following diagram commutes
\begin{equation}
\label{eq:Jacobi}
\xymatrix@C3em{ F_1\sha\times F_1\sha \ar[r]^-{[\cdot,\cdot]} 
\ar[d]^-{\nu_1\times\nu_1} & 
F_1\sha\ar[d]^-{\nu_1} \\
\shl \times \shl\ar[r]^-{\{\cdot,\cdot\}} &
\shl}
\end{equation}
(Here the filtration $\{ F_m\sha\}$ on $\sha$ is induced by that on $\shs_Y$, $\nu_1$ denotes the symbol map of degree $1$, $[\cdot,\cdot]$ the commutator and $\{\cdot,\cdot\}$ the Lagrange bracket defined by the contact form).
\end{definition}
Note that Lemma~\ref{lemma:diff} ensures that there is a canonical graded algebra isomorphism 
$$
\nu\colon\Gr(\sha)\isoto\O_Y^{hom}
$$
hence the symbol map in~\eqref{eq:Jacobi} is well-defined. Moreover, the diagram~\eqref{eq:Jacobi} commutes if and only if $\nu$ preserves the Poisson structure on $\Gr(\sha)$ induced by the commutator $[\cdot,\cdot]$.

\begin{remark}\label{rk:formal}
By replacing $\shs_Y$ with $\widehat\shs_Y$ in the definition above, we get the notion of formal quantization algebra. Note that, if $\sha$ is a quantization algebra on $Y$, then $\widehat \sha = 
\plim[m\in \N] \sha/F_{-m}\sha$ is a formal quantization algebra and $\sha\subset \widehat \sha$.
\end{remark}

\section{Microdifferential algebras on $P^*M$}\label{section:E-alg}

Since a local model for a contact manifold $Y$ is an open subset of $P^*M$ for a complex manifold $M$, we start here by considering the quantization algebras on $P^*M$, following~\cite{BoutetdeMonvel1999}. The sheaf of Sato's microdifferential operators provides the local model for such algebras.

\subsection{$\E$-algebras}
Let $M$ be an $n$-dimensional complex manifold and $\pi\colon P^*M \to M$ its projective cotangent bundle. Recall from Example~\ref{ex:contact} (ii) that $P^*M$ is endowed with a canonical contact structure whose contact line bundle is $\O_{P^*M}(1)$ and whose canonical symplectification is the 
$\C^\times$-principal bundle $\gamma\colon \dot T^*M \to P^*M$.

Let $\E_{P^*M}$ be the sheaf on $P^*M$ of microdifferential operators (see~\cite{S-K-K,Kashiwara1986,Kashiwara2003}, and also~\cite{Schapira} for an exposition). Recall that 
$\E_{P^*M}$ is a central $\C$-algebra filtered by the order of the operators. In a local coordinate system $(x)$ on $M$ with associated local coordinates $(x;[\xi])$ on $P^*M$, a microdifferential operator $P$ of order $\leq m$ has a total symbol
$$
\sigma_{\mathrm{tot}}(P)\in F_m\shs_{P^*M}
$$
and the product structure is given by the Leibniz formula: 
$$
\sigma_{\mathrm{tot}}(PQ) = \sum_{\alpha\in\N^n} \frac{1}
{\alpha !} \partial^{\alpha}_{\xi}\sigma_{\mathrm{tot}}(P)
\partial^{\alpha}_x\sigma_{\mathrm{tot}}(Q)
$$
for $Q$ a microdifferential operator of total symbol $\sigma_{\mathrm{tot}}(Q)$. It follows that 
$\sigma_{\mathrm{tot}}$ locally induces a differential and unitary law $\star = \sum_{j\leq 0} B_j$ on 
$\shs_{P^*M}$ by setting $B_j = \sum_{|\alpha|=-j} \frac{1}
{\alpha !} \partial^{\alpha}_{\xi} \tens \partial^{\alpha}_x$ for $j\leq 0$.

Denote by $F_m\E_{P^*M}$ the sub-sheaf of operators of order $\leq m$ and by
$$
\sigma_m \colon F_m\E_{P^*M}\to \Gr_m\E_{P^*M}\simeq \O_{P^*M}(m)
$$ 
the symbol map of order $m$.
If $\sigma_m(P)$ is not identically zero, then one says that $P$ has order $m$.
In particular, $P\in F_m\E_{P^*M}$ is invertible if and only if $\sigma_m(P)$ is nowhere vanishing.

The symbol maps induce an isomorphism of graded algebras
\begin{equation}\label{eq:sigma}
\sigma\colon\Gr(\E_{P^*M})\isoto\O^{hom}_{P^*M}
\end{equation}
For any $P\in F_m\E_{P^*M}$ and $Q\in F_n\E_{P^*M}$, one has
$$
\sigma_{m+n-1}[P,Q]=\{\sigma_m(P),\sigma_n(Q)\}
$$
Therefore the isomorphism~\eqref{eq:sigma} preserves the Poisson structure and 
$\E_{P^*M}$ is a quantization algebra on $P^*M$. 
Moreover, the $\C$-algebra $\widehat\E_{P^*M}=\plim[m\in \N] \E_{P^*M}/F_{-m}\E_{P^*M}$ 
(\emph{i.e.} obtained by dropping the growth condition~\eqref{eq:estmicrod}) is a formal quantization algebra.

\begin{lemma}[cf. {\cite[Chap. II \S 3.2]{S-K-K}}]\label{lemma:E-morphism}
Let $\varphi\colon \E_{P^*M} \to\E_{P^*M}$ be a $\C$-algebra endomorphism. Then 
$\varphi$ is filtered with $\Gr(\varphi)=\id$.
\end{lemma}
See~\cite[Lemma 4.2.1]{D'Agnolo-Polesello2011} for an elementary proof.
This suggests the following
\begin{definition}\label{definition:E-alg}
An $\E$-algebra on $P^*M$ is a $\C$-algebra locally isomorphic to $\E_{P^*M}$ as a $\C$-algebra.
\end{definition}
By Lemma~\ref{lemma:E-morphism}, any $\E$-algebra $\sha$ comes equipped with a canonical filtration $\{ F_m\sha\}$ and with an isomorphism of graded algebras
\begin{equation}\label{eq:symbol}
\nu\colon\Gr(\sha)\isoto\O^{hom}_{P^*M}
\end{equation}
Also, any $\C$-linear isomorphism of $\E$-algebras $\varphi\colon \sha_1 \to \sha_2$ is filtered and compatible with the associated graded isomorphisms $\nu^1,\nu^2$. This means that  
$\varphi$ maps $F_m\sha_1$ to $F_m\sha_2$ in such a way that 
$\nu^2_m(\varphi(P)) = \nu^1_m(P)$ for all $P\in F_m\sha_1$. (Here  $\nu^i_m$ denotes the symbol map $F_m\sha_i\to F_m \sha_i/F_{m-1}\sha_i\simeq \O_{P^*M}(m)$ of order $m$,
for $i=1,2$.)

Given an $\E$-algebra $\sha$, it follows that the associated graded isomorphism~\eqref{eq:symbol} preserves the Poisson structure, so that $\sha$ is a quantization algebra.
Moreover, the converse is also true:

\begin{proposition}[cf. {\cite[Proposition 2]{BoutetdeMonvel1999}}]\label{pr:Q-alg}
Any quantization algebra on $P^*M$ is an $\E$-algebra.
\end{proposition}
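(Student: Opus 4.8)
The plan is to show that any quantization algebra $\sha$ on $P^*M$ is locally isomorphic, as a $\C$-algebra, to $\she_M$; by Definition~\ref{definition:E-alg} this is all that needs to be checked. Since the question is local, we may fix a Darboux chart and work on an open subset of $P^*M$ equipped with homogeneous symplectic coordinates $(x;\xi)$ on $\dot T^*M$. The strategy is to build a filtered $\C$-algebra isomorphism $\Phi\colon \shs_{P^*M}\isoto \shs_{P^*M}$ intertwining the given differential star-product $\star$ coming from $\sha$ with the Leibniz product of $\she_M$ described in Section~\ref{section:E-alg}, by successive approximation along the filtration. One then transports this through the given local filtered $\C$-module isomorphism $\sha\simeq\shs_{P^*M}$ to obtain the desired algebra isomorphism $\sha\simeq\she_M$.

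First I would record the constraints imposed by the definition of quantization algebra. Writing $\star=\sum_{j\leq 0}B_j$ and the Leibniz product as $\cdot\,{}_{\mathrm{Leibniz}}=\sum_{j\leq 0}L_j$, the commuting diagrams~\eqref{eq:Jacobi} say exactly that $B_0$ is the commutative product of $\O_Y^{hom}$ (so $B_0=L_0$) and that $B_{-1}-B_{-1}^{\mathrm{op}}$ equals the Lagrange/Poisson bracket $\{\cdot,\cdot\}$ (so $B_{-1}-B_{-1}^{\mathrm{op}}=L_{-1}-L_{-1}^{\mathrm{op}}$). Associativity of $\star$ is an infinite system of equations on the $B_j$'s which, degree by degree, is a Maurer--Cartan/Hochschild-cocycle condition. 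The key point is that the relevant Hochschild cohomology of $\O_Y^{hom}$ for graded bidifferential cochains of strictly negative degree vanishes in the appropriate bidegree (this is the computation underlying formality statements for polynomial/symmetric algebras, and in this conic setting it is elementary because $\shl$ has rank one so $F_0\widehat\shs_Y$ is a completed symmetric algebra, cf. the Remark after the definition of $\shs_Y$). Concretely, I would look for $\Phi=\id+\sum_{j\leq -1}\Phi_j$ with $\Phi_j$ a differential operator homogeneous of degree $j$, and solve order by order: having matched $\star$ and the Leibniz product modulo degree $-N$, the obstruction to matching them modulo degree $-N-1$ is a Hochschild $2$-cocycle of degree $-N-1$, which is a coboundary; choosing a primitive determines $\Phi_{-N-1}$, and one continues. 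The estimates~\eqref{eq:estmicrod} are preserved because at each stage only finitely many differential operators (with coefficients built from the $B_j$'s and the previously chosen $\Phi_i$'s) are involved, and the Leibniz/star products of symbols are symbols; Borel-summability along the fiber, as in the Remark, keeps everything within $\shs_{P^*M}$ rather than merely $\widehat\shs_{P^*M}$.

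The main obstacle is twofold. The serious cohomological input is the vanishing of the obstruction classes: one must verify that every graded bidifferential Hochschild $2$-cocycle on $\O_{P^*M}^{hom}$ of negative homogeneity degree is a coboundary in the complex of graded \emph{differential} cochains, so that the induction does not stall. This is where one invokes (the conic analogue of) the Hochschild--Kostant--Rosenberg theorem together with the explicit structure of $\O_Y^{hom}$ as the pushforward $\gamma_*\O_{[\widetilde Y]}$; one should also check the degree-$(-1)$ step separately, since there the cocycle need not be a coboundary but differs from the Leibniz term only by a symmetric (Hochschild-exact) piece, precisely because diagrams~\eqref{eq:Jacobi} force the antisymmetric part to agree. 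The second, more bookkeeping-type, obstacle is the analytic control: one must confirm that the formal isomorphism $\Phi$ actually converges in the sense of~\eqref{eq:estmicrod}, i.e.\ that it sends genuine symbols to genuine symbols and not just formal symbols. Here I would argue as in~\cite{BoutetdeMonvel1999}: $\Phi$ and its inverse are, at each filtration level, given by fixed bidifferential expressions, and one propagates the growth estimates through the recursion, using the same Cauchy-type bounds that make $\she_M$ itself a sheaf of genuine (not formal) microdifferential operators. Once $\Phi$ is constructed and shown to converge, composing with the given filtered $\C$-module isomorphism $\sha|_U\isoto\shs_{P^*M}|_U$ yields a local $\C$-algebra isomorphism $\sha|_U\isoto\she_M|_U$, proving that $\sha$ is an $\she$-algebra.
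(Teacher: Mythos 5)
The paper itself gives no proof of this proposition; it is quoted from Boutet de Monvel (\emph{Complex star algebras}, Proposition 2), whose argument is essentially the local normal-form induction you outline. Your skeleton (reduce to a Darboux chart, conjugate the differential star product onto the Leibniz product of $\she_M$ order by order along the homogeneity filtration) is the right one, but the cohomological pivot of your induction is stated incorrectly, and as stated it is false. For a commutative algebra the Hochschild coboundary of a $1$-cochain is always a \emph{symmetric} bidifferential operator; hence any $2$-cocycle with non-vanishing antisymmetric (biderivation) part --- and there are plenty of bivector fields of negative homogeneity on a conic chart --- is never a coboundary. So ``every graded bidifferential Hochschild $2$-cocycle of negative degree is a coboundary'' cannot drive the recursion, and the problem is not confined to the degree $-1$ step you treat separately via \eqref{eq:Jacobi}: at every later step the discrepancy between the two products has, a priori, a nontrivial antisymmetric part. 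The correct mechanism is the standard two-layer analysis: Hochschild--Kostant--Rosenberg splits the obstruction into an exact symmetric part plus a bivector; associativity at the next order forces the corresponding homogeneous $2$-form to be closed; and since its homogeneity $k$ is nonzero, the Cartan homotopy $L_{eu}=d\,i_{eu}+i_{eu}\,d=k$ on $k$-homogeneous forms makes it exact (a primitive is $\tfrac1k i_{eu}$ of it), with no topological input. This Euler-field argument is precisely why strictly negative homogeneity yields local uniqueness, and why only the degree-$0$ part, \emph{i.e.} $\Omega_{Y}^{1,cl}(0)$, survives in the global classification of Theorem~\ref{th:class}. Your write-up never invokes it, and replaces it by a vanishing statement that does not hold.

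The analytic step is also thinner than the holomorphic case can afford. The equivalence you construct is $\Phi=\id+\sum_{j\leq -1}\Phi_j$ with $\Phi_j$ a differential operator whose order grows with $|j|$, so applying $\Phi$ to a symbol is an infinite series and the estimates \eqref{eq:estmicrod} must be established for the \emph{sum}, with constants uniform in $j$; the observation that each stage involves finitely many differential operators does not address this, and Borel summability is not automatic for the recursively chosen primitives. This is exactly where Boutet de Monvel's proof does genuine work with Cauchy-type bounds for holomorphic symbols, and it is the reason the present paper keeps distinguishing $\she$ from $\widehat\she$: the analogous cocycle-correcting induction in Theorem~\ref{th:stkVSsh} is carried out only in the formal case. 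In short: right overall strategy, but the key cohomological claim must be replaced by the closedness-plus-Euler-homotopy argument, and the convergence of the full equivalence needs an actual estimate rather than a remark.
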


This allows us to identify quantization algebras on $P^*M$ with $\E$-algebras. 

\begin{example}\label{ex:E-alg}
(i) Let $\shp$ be a locally free right $\E_{P^*M}$-module of rank $1$. Then 
$\shendo[\E_{P^*M}^\op-mod](\shp)$ is an $\E$-algebra. If $\shp = \opb{\pi}\shn\tens_{\opb{\pi}\O_M}\E_{P^*M}$ for a line bundle $\shn$ on $M$, the above $\E$-algebra may be written as
$$
\E_\shn = \opb {\pi}\shn\tens_{\opb{\pi}\O_M}\E_{P^*M} \tens_{\opb{\pi}\O_M}
 \opb{\pi}\shn^{\tens -1}
$$ 
In particular, $\E_{\Omega_M}\isofrom\E_{P^*M}^\op$ by the formal adjoint (see~\cite[Theorem 7.7]{Kashiwara2003}). More generally, we may replace $\shp$ and $\shn$ above by
$\C^\times$-twisted modules (see~\cite{Kashiwara1989}, and~\cite{D'Agnolo-Polesello2004} for a review on twisted modules).

(ii) Let $f\colon P^*M \to P^*N$ be a contact transformation. 
By a result of~\cite{S-K-K} (see also~\cite{Kashiwara1986,Kashiwara2003}), there exists an 
invertible\footnote{Recall that an $\sha^\op\tens[\C]\shb$-module $\shp$ is invertible if the functor 
$\shp\tens[\sha](\cdot) \colon \Mod(\sha) \to \Mod(\shb)$ is a $\C$-equivalence.} 
$\opb f \E_{P^*N}\tens[\C]\E_{P^*M}$-module $\shm_f$ which is locally free of rank $1$ as $\E_{P^*M}$-module. Then $\shp= \opb{\pi}\Omega_M\tens_{\opb{\pi}\O_M}\shm_f$ is an invertible 
$\opb f \E_ {P^*N}\tens[\C]\E_{P^*M}^\op$-module locally free of rank $1$ as right 
$\E_{P^*M}$-module and $\opb f \E_ {P^*N}\isoto \shendo[\E_{P^*M}^\op-mod](\shp)$. 
It follows that $\opb f \E_ {P^*N}$ is an $\E$-algebra.
\end{example}

\subsection{Classification}

We denote by
$$
\{\E \text{-algebras}\}_{P^*M}
$$
the set of isomorphism classes of $\E$-algebras on $P^*M$,  pointed by the class of $\E_{P^*M}$, and by $\shaut[\C-alg](\E_{P^*M})$ the sheaf of $\C$-algebra automorphisms of $\E_{P^*M}$.
Then there is an isomorphism of pointed sets
$$
\{\E \text{-algebras}\}_{P^*M}\simeq H^1(P^*M; \shaut[\C-alg](\E_{P^*M}))
$$
where the right-hand side is defined by \v Cech cohomology.

Let $\varphi\colon \E_{P^*M} \to\E_{P^*M}$ be a $\C$-algebra automorphism. 
By Lemma~\ref{lemma:E-morphism}, it is filtered with $\Gr(\varphi)=\id$. Therefore the assignment 
$$
F_m \E_{P^*M} \ni P \mapsto \sigma_{m-1}(\varphi(P)-P) \in \O_{P^*M}(m-1)
$$
defines a $\C$-derivation of $\O_{P^*M}^{hom}$ which is graded of degree $-1$. It is easily seen to 
preserve the Poisson structure, so that it is a section of $Der^P_{\C}(\O_{P^*M}^{hom})(-1)$, and we
denote by $s(\varphi)$ the section of $\Omega_{P^*M}^{1,cl}(0)$ obtained  by applying the isomorphisms~\eqref{pr:GradDer}. 

Since $s(\varphi \circ \varphi')=s(\varphi) + s(\varphi')$, we get a group morphism 
\begin{equation}\label{eq:key}
s\colon\shaut[\C-alg](\E_{P^*M}) \to \Omega_{P^*M}^{1,cl}(0)
\end{equation}
Note that, if $P\in F_m\E_{P^*M}^\times$, then $s(\ad(P))=d\log \sigma_m(P)$.
Set
$$
\E_{P^*M, 1} = \{P\in F_0\E_{P^*M}; \mbox{ $\sigma_0(P)=1$}\}\subset F_0\E_{P^*M} ^\times
$$

\begin{lemma}[cf. {\cite[Proposition 3]{BoutetdeMonvel1999}}]\label{lemma:key}
There is an exact sequence of sheaves of groups on $P^*M$
\begin{equation*}\label{eq:key}
1 \to \E_{P^*M, 1} \to[\ad]  \shaut[\C-alg](\E_{P^*M}) \to[s] \Omega_{P^*M}^{1,cl}(0)\to 0
\end{equation*}
\end{lemma}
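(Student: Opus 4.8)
The plan is to prove exactness at each of the three spots of the sequence
$$1 \to \she_{M,1} \to[\ad] \shaut[\C\text{-alg}](\she_M) \to[s] \Omega_{P^*M}^{1,cl}(0) \to 0.$$
Since all maps are morphisms of sheaves of groups, it suffices to check exactness on stalks, hence we may work on a contractible Stein open subset $U \subset P^*M$ equipped with a homogeneous symplectic coordinate system. Throughout I would freely use that, by Lemma~\ref{lemma:E-morphism}, every $\C$-algebra automorphism of $\she_M$ is filtered with trivial graded, so that the invariant $s(\varphi)$ is well defined, and that $\she_{M,1} \subset F_0\she_M^\times$ is indeed a subgroup (it is the kernel of the group morphism $\sigma_0 \colon F_0\she_M^\times \to \O_{P^*M}^\times$).

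\textbf{Injectivity of $\ad$.} If $P \in \she_{M,1}$ satisfies $\ad(P) = \id$, then $P$ is central in $\she_M$, hence a section of $\C_{P^*M}$; since $\sigma_0(P) = 1$, we get $P = 1$. This shows exactness at $\she_{M,1}$.

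\textbf{Exactness at $\shaut[\C\text{-alg}](\she_M)$.} First, $s(\ad(P)) = d\log\sigma_0(P) = 0$ for $P \in \she_{M,1}$, by the formula $s(\ad(P)) = d\log\sigma_0(P)$ noted before the statement, so $\im(\ad) \subseteq \ker(s)$. Conversely, suppose $\varphi$ is an automorphism with $s(\varphi) = 0$; I must produce $P \in \she_{M,1}$ with $\varphi = \ad(P)$. The idea is to construct $P = 1 + P_{-1} + P_{-2} + \cdots$ order by order: writing $\varphi(Q) - Q = \sum_{j\geq 1} \varphi_j(Q)$ with $\varphi_j$ lowering order by $j$, the vanishing $s(\varphi) = 0$ means precisely that the degree $-1$ derivation $Q \mapsto \sigma_{m-1}(\varphi(Q) - Q)$ vanishes, hence $\varphi$ actually lowers order by at least $2$. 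Then one inductively solves, at each order, a coboundary-type equation $[P_{-k}, \cdot] = \varphi_{k} \pmod{\text{lower order}}$ for the leading symbol $\sigma_{-k}(P_{-k})$; the relevant obstruction is a closed $1$-form whose class one shows is exact using the Poincar\'e lemma on $U$ together with the fact that, on the symplectification $\dot T^*M$, the map $\sigma \mapsto H_\sigma$ identifies closed homogeneous $1$-forms with the relevant symplectic vector fields (as in~\eqref{pr:GradDer}). After infinitely many steps the estimates~\eqref{eq:estmicrod} are verified by a standard majorant-series argument, so $P$ is a genuine section of $\she_{M,1}$, and by construction $\ad(P)^{-1}\circ\varphi$ has vanishing invariant at every order, hence is the identity.

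\textbf{Surjectivity of $s$.} Given a germ of closed $0$-homogeneous $1$-form $\beta \in \Omega_{P^*M}^{1,cl}(0)$ on $U$, I would construct an automorphism $\varphi$ with $s(\varphi) = \beta$ by the same kind of successive-approximation scheme run in the opposite direction: the leading term is handled by the identification~\eqref{pr:GradDer} of $\Omega_{P^*M}^{1,cl}(0)$ with $Der^P_\C(\O_{P^*M}^{hom})(-1)$, and one exponentiates this derivation to an automorphism of $\she_M$ modulo lower order, then corrects iteratively; convergence again follows from~\eqref{eq:estmicrod} via majorant series. Alternatively, one can exhibit explicit generators: for $\beta = d\log f$ with $f \in \O_{P^*M}^\times$ one takes $\ad(P)$ for any $P \in F_0\she_M^\times$ with $\sigma_0(P) = f$, and for the "extra" part of $\Omega_{P^*M}^{1,cl}(0)$ coming from~\eqref{eq:OmegaY} one uses the automorphisms $\ad(e^{c\log\partial})$ (well defined microlocally after choosing a microdifferential operator of order $1$) which realize the Euler direction; since these generate the stalk of $\Omega_{P^*M}^{1,cl}(0)$, surjectivity follows.

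The main obstacle I anticipate is the order-by-order construction in the two non-formal directions, namely controlling the growth of the coefficients so that the formally constructed $P$ (resp.\ $\varphi$) actually satisfies~\eqref{eq:estmicrod} and hence lands in $\she_{M,1}$ (resp.\ defines a bona fide automorphism of $\she_M$ rather than of $\widehat\she_{P^*M}$); this is exactly the point where~\cite{BoutetdeMonvel1999} does real work, and where one must be careful to use the microdifferential estimates rather than merely formal manipulations.
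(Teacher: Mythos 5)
The paper gives no proof of this lemma at all --- it is quoted from Boutet de Monvel's \emph{Complex star algebras} --- so your attempt can only be measured against that standard argument, and your skeleton reproduces it correctly: injectivity of $\ad$ from the fact that the center of $\she_M$ is $\C_{P^*M}$, the inclusion $\im(\ad)\subset\ker(s)$ from $s(\ad(P))=d\log\sigma_0(P)$, surjectivity from the local decomposition $\beta=d\log g+c\,d\log\xi_n$ coming from the splitting of~\eqref{eq:OmegaY} on a small chart, realized by $\ad(P)$ with $\sigma_0(P)=g$ together with conjugation by $e^{c\log\partial_n}$, and, for $\ker(s)\subset\im(\ad)$, the order-by-order construction of an inner correction. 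Two small points there: once $s(\varphi)=0$, each successive discrepancy is a Poisson derivation of degree $-k$ with $k\geq 2$, hence corresponds under~\eqref{pr:GradDer} to a closed $1$-form of homogeneity $1-k\neq 0$, and such a form is exact with \emph{homogeneous} primitive by the Cartan formula (the isomorphism $d\colon \O_{P^*M}(1-k)\isoto\Omega^{1,cl}_{P^*M}(1-k)$ already recorded in Section~\ref{section:contact}); no Poincar\'e lemma or contractibility is needed, and invoking it without keeping track of homogeneity is slightly off. Also, to cancel the component of $\varphi-\id$ lowering order by $k$ you need a correction of order $-(k-1)$, since $\ad(1+P_{-j})-\id$ lowers order by $j+1$; your equation $[P_{-k},\cdot\,]=\varphi_k$ is off by one, though your first step ($P_{-1}$ against the order-two drop) is the right one.

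The genuine gap --- which, to your credit, you flag yourself --- is convergence. The inductive scheme only produces $P$ in $\widehat\she_{M,1}$, and the claim that ``a standard majorant-series argument'' upgrades it to a section of $\she_{M,1}$ satisfying the estimates~\eqref{eq:estmicrod} is precisely the analytic core of Boutet de Monvel's proof, not a routine verification: one must control the growth of the symbols of the infinite product of corrections, and similarly one must actually check that conjugation by $e^{c\log\partial_n}$, an operator of non-integral order, preserves the convergent ring $\she_M$ and not merely $\widehat\she_M$. As written, your argument completely proves the formal ($\widehat\she$) version of the lemma, which the paper also uses via the Remark at the end of Section~\ref{section:E-alg}, but the holomorphic statement still rests on the deferred estimates, so the proof is an accurate outline rather than a complete argument.
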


\begin{proof} 
Let $\varphi\in\shaut[\C-alg](\E_{P^*M})$ and denote by $\tilde \varphi$ the induced $\C$-algebra automorphism of $\E_{\dot T^*M}= \opb \gamma \E_{P^*M}$. By~\cite[Chap. II \S 3.2]{S-K-K} (see also~\cite[Proposition 4.2.2]{D'Agnolo-Polesello2011} for a short proof), $\tilde \varphi$ is locally of the form $\ad(P)$ for some $\lambda\in \C$ and some invertible operator $P$ of order 
$\lambda$. 
Let $\O_{\dot T^*M}(\lambda)\subset \O_{\dot T^*M}$ be the sub-sheaf of $\lambda$-homogeneous functions, $F_\lambda\E_{\dot T^*M}$ the sheaf of operators of order $\lambda + \Z_{\leq 0}$ and $\sigma_\lambda \colon F_\lambda\E_{\dot T^*M}\to \O_{\dot T^*M}(\lambda)$ the symbol map of order $\lambda$. Then $s(\varphi) = d\log \sigma_\lambda(P)$, hence $s(\varphi)=0$ if and only if $\sigma_\lambda(P)$ is locally constant, \emph{i.e.} $\lambda=0$ and $\sigma_0(P)\in \C^\times$. In that case, locally on $P^*M$ one has $\varphi=\ad(P')$, for $P'=P\opb{\sigma_0(P)}\in \E_{P^*M, 1}.$

Let $\eta\in \Omega_{P^*M}^{1,cl}(0)$ and set $\lambda=i_{eu}(\eta)\in \C$. Then, locally on 
$\dot T^*M$ we may find an invertible operator $P$ of order $\lambda$ such that $\eta = d\log \sigma_\lambda(P)$. Then $\ad(P)$ descends to a locally defined $\C$-algebra automorphism 
$\varphi\colon\E_{P^*M} \to \E_{P^*M}$ satisfying $s(\varphi)=\eta$.
\end{proof}

Consider the associated long non-abelian exact sequence
\begin{equation}\label{eq:longexact}
H^1(P^*M;\E_{P^*M,1}) \to 
H^1(P^*M;\shaut[\C-alg](\E_{P^*M})) \to H^1(P^*M;\Omega_{P^*M}^{1,cl}(0))
\end{equation}
The pointed set  $H^1(P^*M;\E_{P^*M,1})$ classifies isomorphism classes of locally 
free right $\E_{P^*M}$-module of rank $1$ which are filtered with trivial graded
(\emph{i.e.} isomorphic to $\O^{hom}_{P^*M}$) and the first map in the sequence is described by 
$[\shm]\mapsto [\shendo[\E_{P^*M}^\op-mod](\shm)]$ (here $[\, \cdot \, ]$ denotes isomorphism class).

\medskip
 
Let $\shd_M$ denote the $\C$-algebra of linear differential operators on $M$. Recall that 
$\shd_M$ is identified to a filtered sub-algebra of $\oim\pi \E_{P^*M}$ and that 
$\Gr(\shd_M)\simeq \DSum_{m\in\N} \oim \pi \O_{P^*M}(m)$. One has $F_m\D_M = 0$ for $m<0$, $F_0\D_M = \O_M$ and for $m\geq 1$
\begin{equation}\label{eq:FD}
F_m\D_M   =   \{P\in \shd_M; [P,f]\in F_{m-1}\D_M \mbox{ for any } f\in \O_M\}
\end{equation}
From these, one gets
\begin{equation}\label{eq:O}
\O_M = \{P\in \shd_M; [P, \cdot] \text{ is nilpotent} \}
\end{equation}

\begin{lemma}\label{lemma:D-morphism}
Let $\psi\colon \D_M \to\D_M$ be a $\C$-algebra endomorphism. Then 
$\psi$ is filtered with $\Gr(\psi)=\id$.
\end{lemma}

\begin{proof}
Since $\psi$ preserves the commutator, by~\eqref{eq:O} it restricts to a $\C$-algebra endomorphism of $\O_M$. This needs to be the identity (see for example~\cite[Lemma 2.1.15]{Kashiwara-Schapira2013} for a proof), hence $\psi$ is a $\C$-algebra endomorphism of $\shd_M$ preserving symbols of order $0$. By induction and by using~\eqref{eq:FD}, one gets $\psi(P)-P\in  F_{m-1}\D_M$ for any $P\in  F_m\D_M$. Then $\psi$ preserves the filtration $\{ F_m\D_M \}$ and symbols of any order, \emph{i.e.} $\Gr(\psi)=\id$. 
\end{proof}

Denote by $\shaut[\C-alg](\shd_M)$ the group of $\C$-algebra automorphisms of $\shd_M$. 
Then the assignment $\psi \mapsto (\Theta_M \ni v \mapsto \psi(v) - v \in \O_M)$ defines an isomorphism of groups
\begin{equation}\label{eq:TDO} 
\shaut[\C-alg](\shd_M)\isoto\Omega_M^{1,cl}
\end{equation}
which sends $\ad(f)$ to $d\log f$ for any $f\in \O_M^\times$.
Since $\O_M^\times/\C^\times\isoto[d\log] \Omega_M^{1,cl}$ and $\O_M^\times=\shd_M^\times \subset \oim\pi\E_{P^*M}^\times$, it follows that the $\C$-algebra automorphisms of $\D_M$ are inner (\emph{i.e.} they are locally of the form $\ad(P)$ for some invertible operator $P$) and extend to inner automorphisms of $\E_{P^*M}$. We thus get a commutative diagram
\begin{equation}\label{eq:pi}
\xymatrix@C3em{
\oim\pi \shaut[\C-alg](\E_{P^*M}) \ar[r]^-{s} & \oim\pi\Omega_{P^*M}^{1,cl}(0)\\
\shaut[\C-alg](\shd_M) \ar[u] \ar[r]_-{\sim} & \Omega_{M}^{1,cl} \ar[u]_-{\pi^*} }
\end{equation}
where $\pi^*$ denotes the pull-back via $\pi\colon P^*M \to M$. 

Let $\dim M \geq 2$. Then for any $m<0$ one has $\oim\pi\O_{P^*M}(m)=0$, hence 
$\shd_M=\oim\pi \E_{P^*M}$ and the push-forward thus defines a morphism 
$\oim\pi \shaut[\C-alg](\E_{P^*M})\to \shaut[\C-alg](\shd_M)$. Moreover, one easily checks 
that the map $\pi^*$ in~\eqref{eq:pi} is an isomorphism and $\oim\pi \E_{P^*M,1}=1$. By Lemma~\ref{lemma:key}, it follows that all arrows in~\eqref{eq:pi} are isomorphisms.

From~\cite[Lemma 1]{BoutetdeMonvel1999}, one immediately gets
\begin{lemma}\label{lemma:R^1}
If $\dim M \geq 2$, then $R^1\oim\pi \Omega_{P^*M}^{1,cl}(0) =0$.
\end{lemma}
In particular, in the above situation we have
\begin{equation}\label{eq:R^1}
H^1(P^*M;\Omega_{P^*M}^{1,cl}(0))\simeq H^1(M;R\oim\pi \Omega_{P^*M}^{1,cl}(0))
\simeq H^1(M;\Omega_M^{1,cl})
\end{equation}

\begin{proposition}\label{pr:E-algebras}
If $\dim M \geq 2$, then the $\E$-algebras on $P^*M$ are classified 
by $H^1(P^*M;\E_{P^*M,1})\times H^1(M;\Omega_M^{1,cl})$.
\end{proposition}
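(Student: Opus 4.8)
The plan is to analyze the long non-abelian cohomology sequence~\eqref{eq:longexact} attached to the short exact sequence of Lemma~\ref{lemma:key}, and to show that the map $s$ is surjective at the level of $H^1$ by producing an explicit splitting, so that~\eqref{eq:longexact} becomes a short exact sequence of pointed sets
\[
1 \to H^1(P^*M;\she_{M,1}) \to H^1(P^*M;\shaut[\C-alg](\she_M)) \to[s] H^1(P^*M;\Omega_{P^*M}^{1,cl}(0)) \to 1,
\]
and then to identify the right-hand term with $H^1(M;\Omega_M^{1,cl})$. The first step is to compute $H^1(P^*M;\Omega_{P^*M}^{1,cl}(0))$ using the exact sequence~\eqref{eq:OmegaY}, namely $0 \to \Omega_{P^*M}^{1,cl} \to[\gamma^*] \Omega_{P^*M}^{1,cl}(0) \to[i_{eu}] \C_{P^*M} \to 0$. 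Here I would use that for $Y = P^*M$ the contact line bundle $\O_{P^*M}(1)$ is \emph{not} trivial, so the Atiyah class $a(\O_{P^*M}(1))\in H^1(P^*M;\Omega_{P^*M}^{1,cl})$ is generally nonzero; nevertheless the long exact sequence in cohomology, together with the known cohomology of projective cotangent bundles (in particular $H^1(P^*M;\C)$ and the structure of $H^1(P^*M;\Omega_{P^*M}^{1,cl})$ via the fibration $\pi$ and the projective-space fibers, which have $H^1(\mathbb{P}^{n-1};\Omega^1)=\C$ for $n\geq 2$), should yield a canonical isomorphism $H^1(P^*M;\Omega_{P^*M}^{1,cl}(0))\isoto H^1(M;\Omega_M^{1,cl})$ compatible with the pull-back $\pi^*$ appearing in diagram~\eqref{eq:pi}.

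The second, and genuinely non-formal, step is to show that $s\colon H^1(P^*M;\shaut[\C-alg](\she_M))\to H^1(P^*M;\Omega_{P^*M}^{1,cl}(0))$ is surjective. For this I would exploit diagram~\eqref{eq:pi}: given a class in $H^1(M;\Omega_M^{1,cl})$, represent it by a cocycle of closed $1$-forms $\{\omega_{ij}\}$ on $M$; by the isomorphism~\eqref{eq:TDO} this is the same as a cocycle $\{\psi_{ij}\}$ of TDO-ring automorphisms of $\shd_M$, i.e.\ the descent datum of a TDO-ring, equivalently a $\C$-twisted line bundle $\shn$ on $M$. Since TDO-automorphisms of $\shd_M$ extend to inner automorphisms of $\she_M$, the cocycle $\{\psi_{ij}\}$ lifts to a cocycle in $\shaut[\C-alg](\she_M)$ — concretely, to the $\she$-algebra $\she_\shn = \opb\pi\shn\tens_{\opb\pi\O_M}\she_M\tens_{\opb\pi\O_M}\opb\pi\shn^{\tens -1}$ of Example~\ref{ex:E-alg} (suitably interpreted for twisted $\shn$) — and chasing~\eqref{eq:pi} shows its image under $s$ is $\pi^*$ of the original class. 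Since $\pi^*\colon H^1(M;\Omega_M^{1,cl})\isoto H^1(P^*M;\Omega_{P^*M}^{1,cl}(0))$ was the isomorphism established in the first step, $s$ is surjective at the $H^1$ level. The main obstacle here is the bookkeeping with twisted line bundles and the verification that the lift is a genuine cocycle (not merely a cocycle up to coboundary) and that no additional obstruction in $H^2$ intervenes; this is where~\cite{BoutetdeMonvel1999} did the work in the case at hand, and I would lean on diagram~\eqref{eq:pi} to keep it honest.

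The third step is to identify the kernel, i.e.\ to verify that $H^1(P^*M;\she_{M,1})$ injects into $H^1(P^*M;\shaut[\C-alg](\she_M))$ and that its image is exactly the fiber of $s$ over the base point. Injectivity and exactness in the middle are the standard consequences of the non-abelian cohomology sequence associated to~\eqref{eq:key} once one knows that $\she_{M,1}$ is a \emph{normal} subsheaf of $\shaut[\C-alg](\she_M)$ (via $\ad$) with abelian quotient $\Omega_{P^*M}^{1,cl}(0)$, which is Lemma~\ref{lemma:key}; the only subtlety is that the sequence of pointed sets is exact \emph{as pointed sets}, so one must check that two locally free rank-one $\she_M$-modules with trivial graded give isomorphic $\she$-algebras $\shendo[\she^\op](-)$ if and only if the modules differ by tensoring with an invertible object, which is classical. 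Finally, combining the three steps: the sequence displayed at the top is exact, the surjection $s$ admits the section constructed in step two (so the extension of pointed sets is "split" in the sense that $H^1(P^*M;\shaut)$ is, as a pointed set, the product of the two outer terms), and invoking the isomorphism of step one gives
\[
\{\she\text{-algebras}\}_{P^*M}\simeq H^1(P^*M;\shaut[\C-alg](\she_M))\simeq H^1(P^*M;\she_{M,1})\times H^1(M;\Omega_M^{1,cl}),
\]
as asserted. I expect step two — producing the section of $s$ and ruling out secondary obstructions — to be the crux, with steps one and three being essentially a cohomology computation and a formal chase respectively.
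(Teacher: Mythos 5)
Your proposal follows essentially the same route as the paper: identify $\{\she\text{-algebras}\}_{P^*M}$ with $H^1(P^*M;\shaut[\C-alg](\she_M))$ via~\eqref{eq:class}, use the exact sequence of Lemma~\ref{lemma:key} and its long exact sequence~\eqref{eq:longexact}, and split it by lifting TDO-automorphisms of $\shd_M$ through diagram~\eqref{eq:pi}, invoking the isomorphism $\pi^*\colon H^1(M;\Omega_M^{1,cl})\isoto H^1(P^*M;\Omega_{P^*M}^{1,cl}(0))$ for $\dim M\geq 2$, which the paper simply cites from~\cite{BoutetdeMonvel1999} rather than re-deriving as you sketch in your first step. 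The argument is correct and matches the paper's proof.
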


\begin{proof}
From the diagram~\eqref{eq:pi}, one gets a commutative diagram

\begin{equation}\label{eq:t}
\xymatrix@C3em{
H^1(P^*M;\shaut[\C-alg](\E_{P^*M})) \ar[r] & H^1(P^*M;\Omega_{P^*M}^{1,cl}(0))\\
& H^1(M;\Omega_M^{1,cl}) \ar[lu]  \ar[u]. }
\end{equation}
Let $\dim M \geq 2$. The vertical arrow is an isomorphism by~\eqref{eq:R^1} and, since the map $s$ 
in~\eqref{eq:pi} is an isomorphism, the first map in the sequence~\eqref{eq:longexact} is injective. It follows that~\eqref{eq:longexact} splits.
\end{proof}
Recall that $H^1(M;\Omega_M^{1,cl})$ classifies $\C$-twisted line bundles\footnote{\label{note:tw}
A $\C^\times$-twisted line bundle on $M$ is given by a pair $(\stks,\shn)$, where $\stks$ is a $\C$-stack locally $\C$-equivalent to $\astk \C_M$ and $\shn$ is a global object of $\astk{\O_M}\tens[\C]\stks$ (for the notation $\astk{(\cdot)}$ refer to Section~\ref{section:class}). An isomorphism between $(\stks,\shn)$ and $(\stkt,\shm)$ is given by a $\C$-equivalence $\Psi\colon \stks\approxto\stkt$ of stacks and an isomorphism 
$\id_{\astk{\O_M}}\tens[\C]\Psi(\shn) \isoto \shm$ as global objects of $\astk{\O_M}\tens[\C]\stkt$. One checks that $\C^\times$-twisted line bundles are classified by $H^2(M;\C^\times_M \to \O^\times_M)\isoto[d\log]H^1(M;\Omega_M^{1,cl})$. Refer to~\cite{Kashiwara1989,D'Agnolo-Polesello2004} for more details.} on $M$. The diagonal arrow in~\eqref{eq:t} is thus given by
$$
[\shn] \mapsto [\E_\shn]
$$
(cf. Example~\ref{ex:E-alg} (i)). 
If $\dim M \geq 2$ then, up to isomorphism, any $\E$-algebra $\sha$ on $P^*M$ is of the form
$$
\opb {\pi}\shn\tens_{\opb{\pi}\O_M}\shendo[\E_{P^*M}^\op-mod](\shp) \tens_{\opb{\pi}\O_M} 
\opb{\pi}\shn^{\tens -1}
$$ 
for $\shp$ a locally free right $\E_{P^*M}$-module of rank $1$ which is filtered with trivial graded and $\shn$ a $\C$-twisted line bundle on $M$. In particular, $\sha$ is an inner form of 
$\E_{P^*M}$, \emph{i.e.} the glueing isomorphisms are inner automorphisms.

\begin{remark}\label{rk:Atiyah}
(i) From the isomorphism~\eqref{eq:TDO} it follows that $H^1(M;\Omega_M^{1,cl})$ classifies rings of twisted differential operators, tdo-rings for short, on $M$, \emph{i.e.} $\C$-algebras locally isomorphic to $\D_M$ (see for example~\cite{Kashiwara1989}). Note that, if $Y$ is a contact manifold with contact line bundle $\shl$, the Atiyah class $a(\shl)$ coincides with the class of the tdo-ring $\D_{\shl}=\shl\tens[\O_Y]\D_Y\tens[\O_Y]\shl^{\tens -1}\simeq \shendo[\D_Y^\op-mod](\shl\tens[\O_Y]\D_Y)$. 

(ii)  All the results in this section hold in the formal case, \emph{i.e.} by replacing the
$\E$-algebras with $\widehat\E$-algebras, which are obtained by using $\widehat\E_{P^*M}$ in Definition~\ref{definition:E-alg}.
\end{remark}

\section{Microdifferential algebroids on contact manifolds}\label{section:class}

In this section we will use the notion of linear stack (the classical reference is~\cite{Giraud1971}, a brief introduction may be found in~\cite{D'Agnolo-Polesello2004,D'Agnolo-Polesello2011}), that of cohomology with values in a crossed module or in a stack of 2-groups (see for example~\cite{Breen1994}, and~\cite{Polesello2008,D'Agnolo-Polesello2011} for a review), as well as that of filtered and graded stack, for which we refer to~\cite[Section 2]{Polesello2008}.

All the definitions and results in this section hold in the formal case.

We denote by $\astk{(\cdot)}$ the functor from the stack of algebras to that of linear stacks defined as follows (see~\cite[Section 2]{D'Agnolo-Polesello2011} for more details): 
if $\sha$ is an algebra, $\astk\sha$ is the linear stack associated to the separated pre-stack 
$$
U\mapsto \{ \text{category with a single object} \bullet  \text{and } \Endo(\bullet)=\sha(U)\};
$$
if $f\colon\sha\to\shb$ is an algebra morphism, $\astk f\colon \astk\sha\to\astk\shb$ is the
linear functor induced by that naturally defined at the level of pre-stacks.
Recall that the linear Yoneda embedding identifies $\astk\sha$ with the full sub-stack of right 
$\sha$-modules whose objects are locally free of rank $1$, and $\astk f$ with the functor induced by the extension of scalars $(\cdot)\tens[\sha]\shb$.
Note that for any linear functor $\Phi\colon\astk\sha\to\astk\shb$ there exist an open cover $\shu=\{U_{i}\}_{i\in I}$, morphisms of algebras $f_i\colon \sha|_{U_i}\to\shb|_{U_i}$ and invertible transformations 
$\Phi|_{U_i} \Rightarrow \astk f_i$.

\subsection{$(\E,\sigma)$-algebroids}\label{subsection:algebroids}
Let $Y$ be a complex  contact manifold.
It follows from Darboux's theorem and Proposition~\ref{pr:Q-alg} that any quantization 
algebra on $Y$ is an $\E$-algebra, \emph{i.e.} a $\C$-algebra locally isomorphic to 
$\E_V = \opb i \E_{P^*M}$ for any Darboux chart $i\colon Y\supset V \to P^*M$.
Though we cannot expect a globally defined $\E$-algebra on $Y$, we have:

\begin{theorem}[cf.~\cite{Kashiwara1996}]\label{th:existence}
There exists a canonical $\C$-stack $\stke_Y$ on $Y$ which is locally equivalent to 
$\astk{\E_V}$ for any Darboux chart $(V,i)$.
\end{theorem}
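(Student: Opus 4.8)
The plan is to build the stack $\stke_Y$ by descent, gluing together the local models $\astk{(\opb i \she_M)}$ attached to Darboux charts. First I would fix a cover $\{V_a\}_{a\in A}$ of $Y$ by Darboux charts $i_a\colon V_a \to P^*M_a$, and set $\stke_a = \astk{(\opb{i_a}\she_{M_a})}$, a $\C$-linear stack on $V_a$. On a double overlap $V_{ab}=V_a\cap V_b$, the composition $i_b\circ i_a^{-1}$ is a contact transformation between open subsets of $P^*M_a$ and $P^*M_b$; by the result of \cite{S-K-K} recalled in Example~\ref{ex:E-alg}, the sheaf $\opb{i_b}\she_{M_b}$ is an $\she$-algebra on $V_{ab}$ (in the sense of Definition~\ref{definition:E-alg}, transported along the Darboux chart $i_a$), hence there is a $\C$-linear equivalence $\Phi_{ab}\colon \stke_a|_{V_{ab}} \isoto \stke_b|_{V_{ab}}$, realized concretely by the transfer bimodule $\shm_{i_b i_a^{-1}}$ (or rather the associated locally-free-rank-one module, as in Example~\ref{ex:E-alg}). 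This is where the key input enters: $\astk{(\cdot)}$ sends an $\she$-algebra to a stack equivalent to the local model precisely because of the bimodule from \cite{S-K-K}, together with the fact (Lemma~\ref{lemma:E-morphism}) that $\she$-algebra isomorphisms are automatically filtered with trivial graded.

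Next I would address the cocycle condition on triple overlaps. The composite $\Phi_{ca}\circ\Phi_{bc}\circ\Phi_{ab}$ is a $\C$-linear autoequivalence of $\stke_a|_{V_{abc}}$, hence (by the remark on linear functors $\astk\sha\to\astk\sha$ preceding Section~5.1, plus Lemma~\ref{lemma:E-morphism}) locally isomorphic to $\astk{\varphi_{abc}}$ for a $\C$-algebra automorphism $\varphi_{abc}$ of $\opb{i_a}\she_{M_a}$; the failure of the strict cocycle identity is measured by a $2$-cochain $\lambda_{abcd}$ with values in the center $\C_Y^\times$. To make this a genuine descent datum one must choose the $\Phi_{ab}$ and the invertible transformations $\theta_{abc}\colon \Phi_{ca}\circ\Phi_{bc}\circ\Phi_{ab}\Rightarrow \id$ coherently. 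The point of Theorem~\ref{th:existence} — the word \emph{canonical} — is that there is a \emph{preferred} choice: the transfer bimodules $\shm_f$ of \cite{S-K-K} are unique up to unique isomorphism once one fixes the normalization coming from the $\Omega_M^{\otimes 1/2}$-twist (the formal adjoint, as in Example~\ref{ex:E-alg}, $\she_{\Omega_M}\isofrom\she_M^\op$), and this rigidity propagates: the composition isomorphisms $\shm_g\circ\shm_f\simeq\shm_{g\circ f}$ are themselves canonical, so the $\theta_{abc}$ are determined and the coherence pentagon holds on the nose. I would carry this out by invoking the functoriality of the quantized contact transformation bimodules.

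Finally I would verify independence of the choices — the cover, the manifolds $M_a$, and the charts $i_a$ — by a standard refinement argument: given two Darboux covers, pass to a common refinement and use the canonical comparison equivalences to produce an equivalence of the two stacks so constructed, compatible with further refinement. This gives a well-defined $\C$-stack $\stke_Y$, and by construction it is locally equivalent to $\astk{(\opb i \she_M)}$ for \emph{any} Darboux chart, since any such chart can be inserted into a refinement of the defining cover.

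The main obstacle is not the gluing per se but the \textbf{canonicity}: one must know that the contact-transformation bimodules $\shm_f$ of \cite{S-K-K} come with canonical composition isomorphisms satisfying the pentagon axiom (after the half-density normalization), so that the descent datum $(\stke_a,\Phi_{ab},\theta_{abc})$ is not merely \emph{a} descent datum but the \emph{canonical} one, independent of all choices up to canonical equivalence. This is precisely the content isolated by Kashiwara in \cite{Kashiwara1996}, and I would cite it for this coherence statement rather than reprove it; the rest of the argument is the formal machinery of stacks and $2$-cocycles recalled at the start of Section~\ref{section:uniq}.
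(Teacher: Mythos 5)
Your statement is one the paper does not prove at all: Theorem~\ref{th:existence} is quoted from Kashiwara \cite{Kashiwara1996} (see also \cite{Polesello2007,D'Agnolo-Polesello2005}), so there is no in-paper argument to compare with, and any assessment has to be against Kashiwara's actual construction. Your overall strategy --- cover $Y$ by Darboux charts, take the local stacks $\astk{(\opb{i_a}\she_{M_a})}$, glue them by the equivalences induced by the transfer bimodules $\shm_f$ of \cite{S-K-K}, and obtain canonicity from a half-form normalization --- is indeed the right skeleton of that construction.

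However, the step on which you hang the word ``canonical'' contains a genuine gap. The bimodules $\shm_f$ are \emph{not} unique up to \emph{unique} isomorphism: since the center of $\she_M$ is $\C$, a bimodule isomorphism can be rescaled by any nonzero constant, so the induced equivalences $\Phi_{ab}$ and the transformations $\theta_{abc}$ carry a genuine $\C^\times$-ambiguity, and the coherence (tetrahedron) condition does not hold ``on the nose'' by functoriality alone --- a priori one only gets an obstruction $3$-cocycle with values in $\C^\times_Y$. The content of Kashiwara's theorem is precisely the taming of this ambiguity: after twisting by $\Omega_M^{\otimes 1/2}$ one has the formal adjoint anti-involution, and the quantized contact transformations normalized to be compatible with it and to have principal symbol $1$ are rigid only up to sign; it is this rigidity lemma, together with an argument showing that the residual $\pm 1$ ambiguity still yields coherent descent data independent of all choices, that produces the canonical stack (and explains why the output is only an algebroid stack and not a sheaf of algebras). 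Your proposal asserts the conclusion of this lemma (strict uniqueness, pentagon for free) and then defers ``the coherence statement'' to \cite{Kashiwara1996}; since that is exactly the non-formal part of the proof, what you have written is a reduction of the theorem to the cited result rather than a proof of it. The refinement/independence-of-cover argument in your last paragraph is fine, but it too should be run at the level of the normalized data, where the sign ambiguity is controlled, not at the level of bare bimodules.
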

\noindent
Note that $\stke_Y$ is a $\C$-algebroid stack, \emph{i.e.} it is locally non-empty and
locally connected by isomorphisms (see~\cite{Kontsevich2001,D'Agnolo-Polesello2005} and~\cite{D'Agnolo-Polesello2011} for a more systematic study). 
Moreover, for any (locally defined) object $L$ of $\stke_Y$, the sheaf of endomorphisms 
$\shendo[\stke_Y](L)$ is a (locally defined) $\E$-algebra.

For a Darboux chart $i\colon Y\supset V \to P^*M$, set
$$
\sigma_V\colon\Gr(\E_V)\isoto\O^{hom}_Y|_V
$$
the pull-back on $V$ of the graded isomorphism~\eqref{eq:sigma}. 
The following proposition allows us to say that $\stke_Y$ provides a quantization of $Y$. 
\begin{proposition}
The $\C$-stack $\stke_Y$ is filtered and there is a canonical equivalence of graded stacks 
$$
{\pmb\sigma}\colon\stkGr(\stke_Y)\approxto \left(\O^{hom}_Y\right)^+
$$
which is locally isomorphic to $\astk{\sigma_V}$ for any Darboux chart $(V,i)$.
\end{proposition}

Then, it makes sense the following
\begin{definition}\label{definition:E-algebroids}
A $(\E,\sigma)$-algebroid on $Y$ is a filtered $\C$-stack $\stka$ locally equivalent to $\stke_Y$ as a 
$\C$-stack and endowed with an equivalence of graded stacks 
$$
{\pmb\nu}\colon\stkGr(\stka)\approxto \left(\O^{hom}_Y\right)^+
$$
A functor of $(\E,\sigma)$-algebroids $(\stka_1,\pmb\nu_1)\to(\stka_2,\pmb\nu_2)$ is a pair 
$(\Phi,\beta)$, where $\Phi\colon \stka_1 \to \stka_2$ is a filtered functor of $\C$-stacks 
and $\beta\colon \pmb \nu_2 \circ \stkGr (\Phi) \Rightarrow \pmb \nu_1$ is a 
graded invertible transformations of functors.
\end{definition}
Clearly, if $\sha$ is an $\E$-algebra, then $\astk{\sha}$ is a $(\E,\sigma)$-algebroid. 

\begin{remark}
By definition, a $(\E,\sigma)$-algebroid $\stka$ is locally equivalent to $\astk{\E_V}$ for any Darboux chart $(V,i)$. However, contrarily to the case of $\E$-algebras, 
this last characterization alone does not guarantee the triviality of the graded stack $\stkGr(\stka)$. Refer to~\cite{D'Agnolo-Polesello2011} for the classification of those $\C$-stacks which are locally equivalent to $\stke_Y$ without any assumption on the graded stack (called $\E$-algebroids in~\cite{D'Agnolo-Polesello2011}).
\end{remark}

Let us briefly recall how to describe the $(\E,\sigma)$-algebroids by means of non-abelian cocycles.
(Refer for example to~\cite[Appendix A]{D'Agnolo-Polesello2011} for a summary on non-abelian cocycles for algebroids and to~\cite{Breen1994} for the case of gerbes. See also~\cite{Toen2002} for a more conceptual approach.)

Let $\stka$ be a $(\E,\sigma)$-algebroid. By definition, there exists an open cover $Y=\Union\nolimits_{i\in I} V_i$ by Darboux charts $Y\supset V_i \to P^*M_i$ such that $\stka|_{V_i} \approx \astk{\E_{V_i}}$. Let $\Phi_i \colon \stka|_{V_i} \to \astk{\E_{V_i}}$ and 
$\Psi_i \colon \astk{\E_{V_i}}\to \stka|_{V_i}$ be $\C$-equivalences quasi-inverse one to each other.
On $V_{ij}=V_i\cap V_j$ there are equivalences 
$\Phi_{ij} = \Phi_i\circ\Psi_j \colon \astk{\E_{V_{ij}}} \to \astk{\E_{V_{ij}}}$, and on $V_{ijk}=V_{ij}\cap V_k$ 
there are invertible transformations 
$\alpha_{ijk} \colon \Phi_{ij}\circ\Phi_{jk}
\Rightarrow \Phi_{ik}$ such that on $V_{ijkl}=V_{ijk}\cap V_l $ the following diagram commutes
\begin{equation}
\label{eq:nat1}
\xymatrix@C5em{
\Phi_{ij}\circ\Phi_{jk}\circ\Phi_{kl} \ar@{=>}[r]^{\alpha_{ijk}
\bullet\id_{\Phi_{kl}}} \ar@{=>}[d]^{\id_{\Phi_{ij}}\bullet\alpha_{jkl}}
& \Phi_{ik}\circ\Phi_{kl} \ar@{=>}[d]^{\alpha_{ikl}} \\
\Phi_{ij}\circ\Phi_{jl} \ar@{=>}[r]^{\alpha_{ijl}} & \Phi_{il}
}
\end{equation}
(Here we denote by $\bullet$ the horizontal composition of transformations.)
Up to shrinking the open cover, one may suppose that $\Phi_{ij}$ is isomorphic to 
$\astk{\varphi_{ij}}$, for a $\C$-algebra isomorphism 
$\varphi_{ij}\colon \E _{V_j} \to \E_{V_i}$ on $V_{ij}$. 
On $V_{ijk}$ the invertible transformation $\astk{\varphi_{ij}}\circ\astk{\varphi_{jk}} \Rightarrow 
\astk{\varphi_{ik}}$ is thus given by an invertible operator $P_{ijk} \in \E_{V_{ijk}}$ of order 
$m_{ijk}$ satisfying
$$
\varphi_{ij}\circ\varphi_{jk}= \ad(P_{ijk}) \circ \varphi_{ik}
$$
Finally, on $V_{ijkl}$ the diagram \eqref{eq:nat1} corresponds to the 
equality
$$
P_{ijk} P_{ikl} = \varphi_{ij}(P_{jkl}) P_{ijl}
$$
Recall from~\cite[Proposition 2.5]{Polesello2008} that the algebroid $\stkGr(\stka)$ is described by the 2-cocycle 
$\{\sigma_{m_{ijk}}(P_{ijk})\}$ with values in $\O^{hom }_Y$, where $\sigma_{m_{ijk}}\colon F_{m_{ijk}}\E_{V_i}\to \shl^{\tens m_{ijk}}|_{V_i}$  denotes the symbol map of order $m_{ijk}$. 
Since $\stkGr(\stka)\approx \O^{hom \, +}_Y$, up to modify the $\varphi_{ij}$'s and the $P_{ijk}$'s by a coboundary, we may suppose that the $P_{ijk}$'s are of order $0$ with $\sigma_0(P_{ijk})=1$.
The non-abelian cocycle 
$$
(\{\E_{V_i}\},\{\varphi_{ij}\}, \{P_{ijk}\})
$$ 
is enough to reconstruct the $(\E,\sigma)$-algebroid $\stka$, up to equivalence.

\subsection{Classification} \label{subsection:class}

We denote by
$$
\{(\E,\sigma)\text{-algebroids}\}_Y
$$ 
the set of equivalence classes of $(\E,\sigma)$-algebroids on $Y$, pointed by the  class of $\stke_Y$.

Let $\stkAut[(\E,\sigma)](\stke_Y)$ denote the stack of 2-groups of $(\E,\sigma)$-algebroid auto-equivalences of $\stke_Y$.
More precisely, its objects are pair $(\Phi,\beta)$, where 
$\Phi\colon \stke_Y\to \stke_Y$ is a filtered equivalence of $\C$-stacks and 
$\beta\colon \pmb \sigma \circ \stkGr (\Phi) \Rightarrow \pmb \sigma$ is a 
graded invertible transformations of functors.
A morphism $\alpha\colon (\Phi_1,\beta_1)\Rightarrow (\Phi_2,\beta_2)$ is a 
filtered invertible transformation of functors 
$\alpha\colon \Phi_1 \Rightarrow \Phi_2$ making the following diagram 
commutative
\begin{equation}
\label{eq:nat}
\xymatrix@C3em{
\pmb \sigma \circ \stkGr(\Phi_1)\ar@{=>}[r]^-{\beta_1} 
\ar@{=>}[d]_-{\id_{\pmb\sigma}\circ \stkGr (\alpha)} & \pmb \sigma\\
\pmb \sigma\circ \stkGr({\Phi_2}) \ar@{=>}[ur]_-{\beta_2} &}
\end{equation}

Recall that one may define the first cohomology pointed set with values in a stack of 2-groups (cf.~\cite{Breen1994,Polesello2008,D'Agnolo-Polesello2011}). By definition of $(\E,\sigma)$-algebroid, one easily gets an isomorphism of pointed sets
\begin{equation}\label{eq:class2}
\{(\E,\sigma)\text{-algebroids}\}_Y\simeq H^1(Y;\stkAut[(\E,\sigma)](\stke_Y))
\end{equation}

\begin{theorem}\label{th:class}
There is an isomorphism of pointed sets
$$
\{(\E,\sigma)\text{-algebroids}\}_Y\simeq H^1(Y;\Omega_{Y}^{1,cl}(0))
$$
\end{theorem}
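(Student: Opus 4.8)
The plan is to reduce, via the isomorphism of pointed sets \eqref{eq:class2}, to computing $H^1(Y;\stkAut[\she-alg](\stke_Y))$, and then to prove that this stack of $2$-groups is equivalent to the sheaf of abelian groups $\Omega_Y^{1,cl}(0)$, regarded as a discrete stack of $2$-groups; the theorem follows at once. The engine is Lemma~\ref{lemma:key}, read at the algebroid level.

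First I would build a morphism of stacks of $2$-groups $\mathbf s\colon\stkAut[\she-alg](\stke_Y)\to\Omega_Y^{1,cl}(0)$. Given an autoequivalence $(\Phi,\beta)$ over an open $U$, pick a cover of $U$ by Darboux charts $V_i$ on which $\Phi|_{V_i}$ is isomorphic to $\astk{f_i}$ for a $\C$-algebra automorphism $f_i$ of $\she_{V_i}$ (recall that every linear functor is locally of this form). By Lemma~\ref{lemma:E-morphism}, $\gr(f_i)=\id$, so $\stkGr(\Phi|_{V_i})\approx\id$ and $\beta$ transports through this equivalence to a unit $\beta_i\in\O_Y^\times(V_i)$. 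I claim that $s(f_i)-d\log\beta_i\in\Omega_Y^{1,cl}(0)(V_i)$ is independent of all choices and glues to a global section $\mathbf s(\Phi,\beta)$: on $V_{ij}$ two such trivializations differ by $\astk{\ad(Q_{ij})}$ for an invertible $Q_{ij}$, the compatibility condition \eqref{eq:nat} pins down the symbol of $Q_{ij}$ in terms of $\beta_i/\beta_j$, and since $s(\ad Q)$ is the $d\log$ of the symbol of $Q$ while $s$ is additive, the corrected local sections agree on overlaps; independence of the Darboux charts holds because $s$ is obtained from the intrinsic isomorphism \eqref{pr:GradDer} and is thus preserved by contact transformations. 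Additivity of $s$ then makes $\mathbf s$ a morphism of stacks of $2$-groups.

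Next I would show that $\mathbf s$ is essentially surjective --- locally it is the sheaf epimorphism $s$ of Lemma~\ref{lemma:key} --- and that its kernel $\stkg$ is equivalent to the trivial stack of $2$-groups, for which it suffices that both homotopy sheaves of $\stkg$ vanish. A self-transformation of the unit object is locally a section of the center $Z(\she_M)^\times=\C_{P^*M}^\times$, whose graded \eqref{eq:nat} forces to be the identity, so the scalar is $1$ and $\pi_1(\stkg)=1$. As for $\pi_0$: on a fine enough cover by Darboux charts, an object $(\Phi,\beta)$ of $\stkg$ satisfies $\Phi|_{V_i}\approx\astk{f_i}$ with $s(f_i)=0$, whence $f_i=\ad(P_i)$ with $P_i\in\she_{M,1}$ by Lemma~\ref{lemma:key}; the natural transformation ``multiplication by $P_i$'' exhibits $\astk{\ad(P_i)}$, together with its tautological compatibility datum, as isomorphic to the unit object of $\stkAut[\she-alg]$ --- the normalization $\sigma_0(P_i)=1$ being exactly what makes this isomorphism satisfy \eqref{eq:nat}. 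Hence every object of $\stkg$ is locally isomorphic to the unit, i.e. $\pi_0(\stkg)=1$.

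Finally, from the short exact sequence of stacks of $2$-groups
$$
1\to\stkg\to\stkAut[\she-alg](\stke_Y)\to[\mathbf s]\Omega_Y^{1,cl}(0)\to1
$$
and the vanishing $H^i(Y;\stkg)=\ast$ for $i=0,1,2$, the long exact cohomology sequence yields a bijection of pointed sets $H^1(Y;\stkAut[\she-alg](\stke_Y))\isoto H^1(Y;\Omega_Y^{1,cl}(0))$; together with \eqref{eq:class2} this is the assertion. (Equivalently, $\stkAut[\she-alg](\stke_Y)$ has $\pi_1=1$ and $\pi_0\cong\Omega_Y^{1,cl}(0)$, hence is equivalent to the discrete stack of $2$-groups $\Omega_Y^{1,cl}(0)$.) The main obstacle is the construction of $\mathbf s$: patching the local symbol maps of Lemma~\ref{lemma:key} into a single global morphism requires careful bookkeeping of the normalization provided by the datum $\beta$ and of Darboux-chart independence. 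Conceptually the decisive point is $\pi_0(\stkg)=1$ --- it is precisely the passage from $\she$-algebras to $(\she,\sigma)$-algebroids that kills the ``locally free rank-one module'' contribution, namely the factor $H^1$ of $\she_{M,1}$ appearing in Proposition~\ref{pr:E-algebras}, since at the algebroid level the corresponding inner autoequivalences become isomorphic to the identity.
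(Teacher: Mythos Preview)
Your proposal is correct and follows the same strategy as the paper: reduce via \eqref{eq:class2} to showing that $\stkAut[\she-alg](\stke_Y)$ is equivalent, as a stack of $2$-groups, to the discrete sheaf $\Omega_Y^{1,cl}(0)$, with Lemma~\ref{lemma:key} as the engine. The differences are organizational rather than substantive. First, the paper defines $\pmb s$ intrinsically: pick a local object $L$ of $\stke_Y$, use $\beta_L$ to form the degree $-1$ Poisson derivation $p\mapsto\pmb\sigma_{m-1}(\Phi(p)-p)$, and apply \eqref{pr:GradDer}; independence of $L$ is immediate because any two objects are locally isomorphic. This sidesteps your gluing of the corrected local sections $s(f_i)-d\log\beta_i$ and the Darboux-chart bookkeeping. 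Second, in place of your direct $\pi_0,\pi_1$ computation of the kernel, the paper identifies $\stkAut[\she-alg](\she_V^+)$ locally with the crossed module $[\she_{V,1}\to[\ad]\shaut[\C\text{-alg}](\she_V)]$ and then reads off the equivalence from Lemma~\ref{lemma:key}; this is exactly your computation in crossed-module language.

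One small point to tighten in your $\pi_0$ argument: from $(\Phi,\beta)\in\stkg$ you only get $s(f_i)=d\log\beta_i$, not $s(f_i)=0$. You must first normalize the local trivialization by replacing $f_i$ with $\ad(Q_i^{-1})\circ f_i$ for some $Q_i\in F_0\she_V^\times$ with $\sigma_0(Q_i)$ matching $\beta_i$, so that the transported $\beta_i$ becomes $1$; only then does $s(f_i)=0$ follow and Lemma~\ref{lemma:key} produce $P_i\in\she_{V,1}$. The paper carries out precisely this normalization in its second step.
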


\begin{proof}
We will show that there is an equivalence of stacks of 2-groups
$$
\stkAut[(\E,\sigma)](\stke_Y)\approx \Omega_{Y}^{1,cl}(0)
$$
where the sheaf $\Omega_{Y}^{1,cl}(0)$ is considered as a discrete stack. 
Since equivalent stacks of 2-groups have isomorphic cohomologies, the result will follow 
from~\eqref{eq:class2}.

{\em First step.} Let us define a functor of stacks of 2-groups
$$
\pmb s\colon \stkAut[(\E,\sigma)](\stke_Y)\to \Omega_{Y}^{1,cl}(0)
$$ 
Take $(\Phi,\beta)\in \stkAut[(\E,\sigma)](\stke_Y)$ and choose a (locally defined) object $L$ of $\stke_Y$.
The isomorphism $\beta_L\colon  \pmb \sigma (\Phi(L)) \isoto \pmb \sigma(L)$ allows us to define
$$
F_m\shendo[\stke_Y](L)\ni p\mapsto   \pmb \sigma_{m-1}(\Phi(p) - p) \in \Gr_{m-1}\shendo[(\O^{hom}_Y)^+](\pmb \sigma (L)).
$$
This extends to a $\C$-derivation of $\shendo[(\O^{hom}_Y)^+](\pmb \sigma (L))\, \simeq \O^{hom}_Y$ of degree $-1$ preserving the Poisson structure.
By using the isomorphisms~\eqref{pr:GradDer}, we get a local section $\{\pmb s (\Phi,\beta)\}_{L}\in\Omega_Y^{1,cl}(0)$, which depends only on the isomorphism class of $L$. Since any two objects $L,L'$ of $\stke_Y$ are locally isomorphic, it follows that $\{\pmb s (\Phi,\beta)\}_{L}=\{\pmb s (\Phi,\beta)\}_{L'}$ locally, hence globally. This defines $\pmb s (\Phi,\beta)\in\Omega_Y^{1,cl}(0)$, which depends only on the  isomorphism class of $(\Phi,\beta)$. 

{\em Second step.} At any point $y\in Y$, we may find a Darboux chart $(V,i)$ containing $y$ such that $\stke_Y|_V\approx \E _V^+$. Let $(\Phi,\beta)\in  \stkAut[(\E,\sigma)](\E_V^+)$.
Up to take a smaller open subset, one may suppose that $\pmb\sigma$ is 
isomorphic to $\astk{\sigma_V}$ and $\Phi$ to $\astk{\varphi}$
for a $\C$-algebra automorphism $\varphi\colon \E _V \to \E_V$.
The graded transformation  $\beta$ is thus given by a nowhere vanishing function 
$b\in\O_Y|_V\simeq\Gr_0(\E_V)$. 

Locally there exists an invertible operator $Q\in F_0\E_V$ such that $b\sigma_{V,0} (Q)=1$. 
Set $\tilde\varphi= \ad(\opb Q) \circ\varphi$. Then $Q$ defines a morphism 
$(\tilde\varphi^+,\id) \Rightarrow (\Phi, \beta)$.
It follows that the functor of stacks of 2-groups
$$
\left[\E_{V,1} \to[\ad]  \shaut[\C-alg](\E_V) \right]
\longrightarrow \stkAut[(\E,\sigma)](\E_V^+) \qquad
\psi \mapsto (\psi^+,\id)
$$
is essentially surjective, where the left-hand side denotes the stack of 2-groups associated 
to the crossed module $\E_{V, 1}=\{P\in F_0\E_V; \mbox{ $\sigma_{V,0}(P)=1$}\} \to[\ad]  \shaut[\C-alg](\E_V)$
(see~\cite{Breen1994,Polesello2008,D'Agnolo-Polesello2011}).

Let $\psi_1,\psi_2$ be two sections of  $\shaut[\C-alg](\E_V)$ and consider a morphism
$\alpha\colon(\psi_1^+,\id)\Rightarrow (\psi_2^+,\id)$ in $\stkAut[(\E,\sigma)](\E_V^+)$. 
Then $\alpha$ is locally given by an invertible operator $R\in F_0\E_V$ satisfying
$$
\psi_2 = \ad (R) \circ \psi_1
$$ 
where the diagram~\eqref{eq:nat} corresponds to the equality $\sigma_{V,0}(R)=1$. 
Therefore, $R$ defines a morphism $\psi_1\to\psi_2$ in 
$\left[\E_{V,1} \to[\ad]  \shaut[\C-alg](\E_V) \right]$. 
The above functor is thus full. Since it is also faithful, we get an 
equivalence
$$
\left[\E_{V,1} \to[\ad]  \shaut[\C-alg](\E_V)\right]\approx \stkAut[(\E,\sigma)](\E_V^+)
$$

{\em Third step.} 
Through the above equivalence, the functor
$$
\pmb s|_V \colon \stkAut[(\E,\sigma)](\stke_Y|_V) \to \gamma_*\Omega_{\opb \gamma(V)}^{1,cl}(0)
$$
reduces to the functor
$$
\left[\E_{V,1} \to[\ad]  \shaut[\C-alg](\E_V)\right]\to
\gamma_*\Omega_{\opb \gamma(V)}^{1,cl}(0)
$$
induced by the morphism~\eqref{eq:key}.
By Lemma~\ref{lemma:key} and~\cite[Proposition 4.4]{Polesello2008} it follows that this is an equivalence of stacks of 2-groups. The functor $\pmb s$ is thus locally, hence globally, an equivalence.
\end{proof}

\begin{corollary}\label{cor:exact}
Suppose that $Y$ is exact. Then
$$
\{(\E,\sigma)\text{-algebroids}\}_Y\simeq H^1(Y;\Omega_{Y}^{1,cl}) \times  H^1(Y;\C_Y)
$$
If moreover $H^i(Y;\O_Y)=0$ for $i=1,2$, then 
$$
\{(\E,\sigma)\text{-algebroids}\}_Y\simeq H^2(Y;\C_{Y})\times H^1(Y;\C_Y)
$$
\end{corollary}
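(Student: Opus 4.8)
The plan is to feed the exact sequence~\eqref{eq:OmegaY} into Theorem~\ref{th:class}, using the hypothesis that $Y$ is exact to split off the contribution of the Atiyah class. Recall from the Lemma following~\eqref{eq:OmegaY} that the extension
\[
0 \to \Omega_Y^{1,cl} \to[\gamma^*] \Omega_{Y}^{1,cl}(0) \to[i_{eu}] \C_Y \to 0
\]
has class $a(\shl) \in H^1(Y;\Omega_Y^{1,cl})$, and that $a(\shl)=0$ precisely when $(Y,\alpha)$ is exact. Hence for $Y$ exact this short exact sequence splits, giving an isomorphism of $\C$-modules $\Omega_Y^{1,cl}(0) \simeq \Omega_Y^{1,cl} \oplus \C_Y$ (one may fix the splitting by choosing a global section $\beta \in \Omega_Y^{1,cl}(0)$ with $i_{eu}\beta=1$, which exists since $\shl$ is trivial). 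Taking $H^1(Y;-)$ and combining with Theorem~\ref{th:class} then yields
\[
\{(\she,\sigma)\text{-algebroids}\}_Y \simeq H^1(Y;\Omega_{Y}^{1,cl}(0)) \simeq H^1(Y;\Omega_Y^{1,cl}) \times H^1(Y;\C_Y),
\]
which is the first assertion.

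For the second assertion, I would run the de Rham-type sequence~\eqref{eq:deRham},
\[
0 \to \C_Y \to \O_Y \to[d] \Omega_Y^{1,cl} \to 0,
\]
through its long exact cohomology sequence. The relevant stretch reads
\[
H^1(Y;\O_Y) \to H^1(Y;\Omega_Y^{1,cl}) \to H^2(Y;\C_Y) \to H^2(Y;\O_Y).
\]
Under the hypothesis $H^i(Y;\O_Y)=0$ for $i=1,2$, the outer terms vanish, so the connecting map gives an isomorphism $H^1(Y;\Omega_Y^{1,cl}) \isoto H^2(Y;\C_Y)$. Substituting this into the first assertion produces
\[
\{(\she,\sigma)\text{-algebroids}\}_Y \simeq H^2(Y;\C_Y) \times H^1(Y;\C_Y),
\]
as claimed.

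The only point requiring a little care — and the step I would expect to be the mildest obstacle — is the passage from a splitting of the sheaf-level exact sequence~\eqref{eq:OmegaY} to the direct-sum decomposition on $H^1$: one must check that the splitting is a splitting of $\C_Y$-modules (equivalently, that the chosen $\beta$ with $i_{eu}\beta=1$ genuinely trivializes the torsor of flat connections on $\shl$), which is exactly the content of the Lemma preceding the statement together with the equivalence $a(\shl)=0 \iff (Y,\alpha)$ exact recorded there. Once that is in hand, everything else is a formal consequence of Theorem~\ref{th:class} and the long exact sequence of~\eqref{eq:deRham}; no further analytic input is needed.
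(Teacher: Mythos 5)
Your argument is correct and follows the paper's own proof essentially verbatim: exactness makes $\shl$ trivial, so $a(\shl)=0$ and the sequence~\eqref{eq:OmegaY} splits, whence the first isomorphism via Theorem~\ref{th:class}, and the second follows from the long exact sequence of~\eqref{eq:deRham}. The only nitpick is your aside that $a(\shl)=0$ holds \emph{precisely} when $Y$ is exact --- only the implication ``exact $\Rightarrow a(\shl)=0$'' is warranted (and needed), since vanishing of the Atiyah class gives a flat connection on $\shl$, not triviality.
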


\begin{proof}
If $Y$ is exact, then the associated contact line bundle $\shl$ is trivial and the Atiyah class $a(\shl)$ vanishes. Hence, by Lemma~\ref{lemma:Atiyah} the sequence~\eqref{eq:OmegaY} splits and one uses Theorem~\ref{th:class}.
The second part of the statement follows by taking the long exact sequence associated to~\eqref{eq:deRham}.
\end{proof}

From the isomorphism~\eqref{eq:R^1} we immediately get
\begin{corollary}\label{cor:Boutet}
Let $Y=P^*M$ with $\dim M \geq 2$. Then 
$$
\{(\E,\sigma)\text{-algebroids}\}_{P^*M}\simeq H^1(M;\Omega_M^{1,cl})
$$ 
\end{corollary}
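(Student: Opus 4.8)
The plan is to combine Theorem~\ref{th:class} with the computation underlying Proposition~\ref{pr:E-algebras}. By Theorem~\ref{th:class} we already have
$$
\{(\she,\sigma)\text{-algebroids}\}_{P^*M}\simeq H^1(P^*M;\Omega_{P^*M}^{1,cl}(0)),
$$
so the only thing left to prove is that, when $\dim M \geq 2$, the pull-back $\pi^*\colon H^1(M;\Omega_M^{1,cl})\to H^1(P^*M;\Omega_{P^*M}^{1,cl}(0))$ is an isomorphism. This is precisely the input used in the proof of Proposition~\ref{pr:E-algebras}, where it is attributed to~\cite{BoutetdeMonvel1999}; the diagram~\eqref{eq:pi} exhibits $\pi^*$ as the composite of the isomorphism $\shaut[TDO](\shd_M)\isoto \Omega_M^{1,cl}$ with the arrows relating $\shaut[TDO](\shd_M)$ to $\oim\pi\shaut[\C-alg](\she_M)$ and $s$, but for this corollary one needs the statement at the level of the abelian sheaves $\Omega^{1,cl}$ directly. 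So first I would record the sheaf-level fact, following~\cite{BoutetdeMonvel1999}: for $\dim M \geq 2$ one has $\roim{\pi}\,\Omega_{P^*M}^{1,cl}(0)\simeq \Omega_M^{1,cl}$ in the derived category, equivalently $\pi^*$ induces isomorphisms on all cohomology; the vanishing $\oim{\pi}\O_{P^*M}(m)=0$ for $m<0$ (already noted in the text, since then $\shd_M=\oim\pi\she_M$) and the exact sequences~\eqref{eq:OmegaY}, \eqref{eq:deRham} feed a computation of $\roim{\pi}$ of the relevant pieces over the $\mathbb P^{\dim M -1}$-fibres.

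Granting that, the argument is a two-line chase. Applying $H^1$ to the isomorphism $\pi^*$ gives $H^1(M;\Omega_M^{1,cl})\isoto H^1(P^*M;\Omega_{P^*M}^{1,cl}(0))$, and composing with the isomorphism of Theorem~\ref{th:class} yields
$$
\{(\she,\sigma)\text{-algebroids}\}_{P^*M}\simeq H^1(P^*M;\Omega_{P^*M}^{1,cl}(0))\simeq H^1(M;\Omega_M^{1,cl}),
$$
which is the claim. One should also check that the resulting bijection is an isomorphism of pointed sets, i.e.\ that the canonical algebroid $\stke_{P^*M}$ corresponds to $0$; this is immediate since $\stke_{P^*M}\approx \astk{\she_M}$ comes from an honest algebra, so its class in $H^1(P^*M;\stkAut[\she-alg](\stke_{P^*M}))$ is the base point, which $\pmb s$ sends to $0$.

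Alternatively, and perhaps more cleanly, one can bypass $H^1(P^*M;\Omega_{P^*M}^{1,cl}(0))$ altogether: the equivalence of stacks of $2$-groups $\stkAut[\she-alg](\stke_{P^*M})\approx \Omega_{P^*M}^{1,cl}(0)$ established in the proof of Theorem~\ref{th:class} restricts, via the commutative diagram~\eqref{eq:pi}, to an equivalence with $\pi^{-1}\Omega_M^{1,cl}$ after pushing forward by $\pi$, using again that $\pi^*\colon\Omega_M^{1,cl}\isoto\roim\pi\,\Omega_{P^*M}^{1,cl}(0)$ when $\dim M\geq 2$; taking $H^1$ then gives the stated identification with $H^1(M;\Omega_M^{1,cl})$ without detouring through cohomology on $P^*M$. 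Either way, the \textbf{main obstacle} is the input $\roim\pi\,\Omega_{P^*M}^{1,cl}(0)\simeq\Omega_M^{1,cl}$ for $\dim M\geq 2$; everything else is formal. I would present the proof in the first form above, citing~\cite{BoutetdeMonvel1999} (as is done for Proposition~\ref{pr:E-algebras}) for the isomorphism $\pi^*$ and otherwise just invoking Theorem~\ref{th:class}.
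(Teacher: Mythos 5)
Your argument is essentially the paper's own proof: Theorem~\ref{th:class} reduces the statement to the fact that $\pi^*\colon H^1(M;\Omega_M^{1,cl})\to H^1(P^*M;\Omega_{P^*M}^{1,cl}(0))$ is an isomorphism for $\dim M\geq 2$, which the paper likewise takes from~\cite{BoutetdeMonvel1999} (Proposition~6~(i) there). The extra remarks (pointedness via $\stke_{P^*M}\approx\astk{\she_M}$, the derived push-forward reformulation) are harmless refinements of the same route.
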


It remains to consider the 1-dimensional case (cf.~\cite[Section 3.5]{BoutetdeMonvel1999}).
Let $Y$ be a Riemann surface. Recall from Remark~\ref{rk:1-dim} that $Y\simeq P^*Y$ has a canonical contact structure, whose contact line bundle is $\Theta_Y$.

If $Y$ is non-compact, then 
the contact structure is exact. Since $H^1(Y;\Omega_Y) \simeq H^1(Y;\O_Y)=0$, it follows from Corollary~\ref{cor:exact} that the 
$(\E,\sigma)$-algebroids are classified by $H^1(Y;\C_Y)$.

Suppose now that $Y$ is connected and compact of genus $g$. Then $H^1(Y;\Omega_Y)\simeq\C$ by the residue map.

If $g=1$, then $Y$ is exact.
Again, from Corollary~\ref{cor:exact} it follows that the $(\E,\sigma)$-algebroids are classified by 
$$
H^1(Y;\Omega_{Y})\times H^1(Y;\C_Y)\simeq \C^3
$$

If  $g\neq 1$, then the $(\E,\sigma)$-algebroids are classified by
$$
H^1(Y;\Omega^{1,cl}_Y(0))\isoto H^1(Y;\C_Y)\simeq \C^{2g}
$$
where the first isomorphism is obtained by using the sequence~\eqref{eq:longOmega} below. In particular, up to equivalence, $\astk\E_{\mathbb P^{1}}$ is the unique $(\E,\sigma)$-algebroid on $\mathbb P^{1}$ 
(cf. Example~\ref{ex:q-proj}).

\begin{remark}
Using similar techniques, one may show that Theorems~\ref{th:existence} and~\ref{th:class} hold also in the framework of $C^\infty$ manifolds, hence replacing $\E$-algebras with Toeplitz algebras (see~\cite{BoutetdeMonvel2002}). However, in that case any quantization algebroid comes from a unique, up to isomorphism, Toeplitz algebra (cf.~\cite{Deligne1995}). It follows that in the  $C^\infty$ case there is a one-to-one correspondence between quantization algebras and quantization algebroids. Note also that in the $C^\infty$ case the classification of these objects is always given by $H^2(Y;\C_Y)\times  H^1(Y;\C_Y)$, since the exact sequence~\eqref{eq:OmegaY} splits and $H^1(Y;\Omega_Y^{1,cl}) \isoto H^2(Y;\C_Y)$.
\end{remark}

\subsection{$(\E,\sigma)$-algebroids on compact K\"ahler manifolds}
Let $Y$ be a complex  contact manifold of dimension $2n+1$, with associated contact line bundle 
$\shl$.
Consider the long exact sequence in cohomology associated to~\eqref{eq:OmegaY}
\begin{equation}\label{eq:longOmega}
H^0(Y;\C_Y) \to[\delta] H^1(Y;\Omega_Y^{1,cl}) \to H^1(Y;\Omega_{Y}^{1,cl}(0)) 
\to H^1(Y;\C_Y)
\end{equation}
The coboundary map $\delta$ sends $\lambda\in H^0(Y;\C_Y)$ to $-\lambda a(\shl)=[\D_{\shl^{\tens -\lambda}}]$, where $a(\shl)$ is the Atiyah class of $\shl$ (see Remark~\ref{rk:Atiyah} (i)).
In particular, from the isomorphism $\Omega_Y\simeq \shl^{\tens -(n+1)}$ it follows that the class of the tdo-ring $\D_{\Omega^{\tens 1/2}_Y}$  
is sent to the class of the $(\E,\sigma)$-algebroid $\stke_Y$ in $H^1(Y;\Omega_Y^{1,cl}(0))$.
Note that, up to isomorphism/equivalence, they are the unique ones which carry an anti-involution corresponding to the operation of taking the formal adjoint of (micro)differential operators (see~\cite{Polesello2007}).

Recall that, if $Y$ is compact, then $H^i(Y;\C_Y)$ is finite dimensional  for any $i$ and one sets 
$b_i(Y)=\dim H^i(Y;\C_Y)$. Given a finite rank vector bundle $\shv$ on $Y$, let $c_1(\shv) \in H^2(Y;\C_Y)$ be its first Chern class. Then, $c_1(\shl)= \frac{1}{n+1}c_1(\Theta_Y)$.

\begin{lemma}[cf.~{\cite[Lemma 1]{Ye1994}}]\label{lem:Ye}
If $Y$ is compact K\"ahler\footnote{By~\cite[Theorem 2.5]{Frantzen-Peternell2010}, this result still holds by replacing "K\"ahler" with "bimeromorphically equivalent to a K\"ahler manifold''.} of $\dim Y\geq 3$, then $c_1(\shl)\neq0$.
\end{lemma}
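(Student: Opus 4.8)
The plan is to argue by contradiction: assuming $c_1(\shl)=0$ in $H^2(Y;\C_Y)$ I will produce a \emph{compact Kähler exact} contact manifold of dimension $2n+1\geq3$, which is impossible. The last assertion (``$Y$ cannot be exact'') is then immediate, since an exact contact manifold has trivial contact line bundle and hence $c_1(\shl)=0$.

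\emph{Reduction to the exact case.} Recall $c_1(\shl)=\tfrac1{n+1}c_1(\Theta_Y)$, so the hypothesis $c_1(\shl)=0$ means that $Y$ is a compact Kähler manifold with numerically trivial canonical bundle $\Omega_Y\simeq\shl^{\tens-(n+1)}$ (Definition-Proposition~\ref{def-prop:contact form}~(iii)). By Yau's solution of the Calabi conjecture $Y$ carries a Ricci-flat Kähler metric, and the Beauville--Bogomolov decomposition theorem then furnishes a finite étale cover $Y_0\to Y$ with $\Omega_{Y_0}\simeq\O_{Y_0}$. Pulling back $\alpha$ one gets a compact Kähler contact manifold $(Y_0,\alpha_0)$ of dimension $2n+1$ whose contact line bundle $\shl_0$ is the inverse image of $\shl$; from $\shl_0^{\tens(n+1)}\simeq\Omega_{Y_0}^{\tens-1}\simeq\O_{Y_0}$ it is a torsion element of $\Pic(Y_0)$, say of order $m$. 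The cyclic cover $p\colon Y'\to Y_0$ attached to $\shl_0$ — the relative spectrum $\underline{\mathrm{Spec}}\,\DSum_{j=0}^{m-1}\shl_0^{\tens j}$ — is finite étale, and $p^{*}\shl_0$ acquires a nowhere vanishing section, hence is trivial. Composing, $Y'\to Y$ is a finite étale cover; $Y'$ is compact Kähler (pull back a Kähler form) of dimension $2n+1\geq3$, and it carries a contact structure with trivial contact line bundle. By the second exact sequence in~\eqref{eq:alpha} this contact structure lifts to a nowhere vanishing holomorphic $1$-form $\beta$ on $Y'$, so that $(Y',\beta)$ is exact.

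\emph{The Hodge-theoretic contradiction.} Since $\beta$ is holomorphic, $d\beta=\partial\beta$ is a global holomorphic $2$-form on $Y'$. On a compact Kähler manifold a holomorphic $p$-form is harmonic — it is $\bar\partial$-closed by holomorphy and $\bar\partial^{*}$-closed for bidegree reasons — hence $d$-closed, and the inclusion $H^0(Y';\Omega^2_{Y'})\hookrightarrow H^2(Y';\C_{Y'})$ of the $(2,0)$-summand of the Hodge decomposition is injective. As $d\beta$ is exact, this forces $d\beta=0$; since $n\geq1$ we conclude $\beta\wedge(d\beta)^n=0$, contradicting the non-degeneracy of the contact form (Definition-Proposition~\ref{def-prop:contact form}~(iii)). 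Therefore $c_1(\shl)\neq0$.

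\emph{On the difficulty.} The only non-elementary ingredient is the implication ``$c_1(\Theta_Y)=0\Rightarrow$ some finite étale cover has torsion canonical bundle'', which rests on Yau's theorem together with the Beauville--Bogomolov structure theory; everything past the reduction is soft. The point to handle with care is that the successive covers stay compact Kähler of dimension $2n+1$, and that it is the \emph{holomorphicity} of $d\beta$ which lets exactness force $d\beta=0$ — the mechanism that collapses the contact condition in dimension $\geq3$. Alternatively one may bypass the structure theorem and argue directly on $Y$: the Ricci-flat metric makes $\shl$ a flat line bundle, whence, by a Bochner identity, every holomorphic section of $\Omega^1_Y\tens\shl$ is parallel; read off in a $\nabla$-flat holomorphic frame of $\shl$, the contact form becomes a parallel — hence closed — holomorphic $1$-form, and again $\alpha\wedge(d\alpha)^n$ cannot be non-degenerate.
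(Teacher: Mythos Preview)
The paper does not supply a proof of this lemma; it simply cites Ye's result. Your argument is correct and is essentially the standard one: reduce by finite \'etale covers to the exact case, then kill the contact structure via the fact that global holomorphic forms on compact K\"ahler manifolds are $d$-closed. The reduction step is the only place requiring care, and you handle it properly --- from $c_1(\shl)=0$ in $H^2(Y;\C)$ one cannot conclude directly that $\shl$ is torsion in $\Pic(Y)$, so some input (Yau plus Beauville--Bogomolov, or the Bochner-type alternative you sketch at the end) is genuinely needed to pass to a cover on which $\shl$ trivialises. After that the Hodge-theoretic step is immediate; in fact you could shorten it slightly by invoking directly that holomorphic $1$-forms on a compact K\"ahler manifold are closed, rather than routing through the injectivity of $H^0(\Omega^2)\hookrightarrow H^2(\C)$.
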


In particular, in the above situation $Y$ cannot be exact.

\begin{proposition}\label{pr:Kaheler}
Let $Y$ be compact K\"ahler of $\dim Y\geq 3$ with $b_1(Y)=0$. Then
$$
\{(\E,\sigma)\text{-algebroids}\}_Y \simeq H^1(Y;\Omega_Y^{1,cl})/ H^0(Y;\C_Y)
$$
\end{proposition}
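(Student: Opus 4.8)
The plan is to combine Theorem~\ref{th:class} with the long exact sequence~\eqref{eq:longOmega} under the hypotheses at hand. By Theorem~\ref{th:class} we have
$$
\{(\she,\sigma)\text{-algebroids}\}_Y\simeq H^1(Y;\Omega_{Y}^{1,cl}(0)),
$$
so it suffices to identify the right-hand side with $H^1(Y;\Omega_Y^{1,cl})/H^0(Y;\C_Y)$. Looking at~\eqref{eq:longOmega}, the term $H^1(Y;\Omega_{Y}^{1,cl}(0))$ sits between $H^1(Y;\Omega_Y^{1,cl})$ (via the map $e$, whose kernel is the image of the coboundary $\delta$) and $H^1(Y;\C_Y)$. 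Thus two things need to be checked: first, that the last map $H^1(Y;\Omega_{Y}^{1,cl}(0))\to H^1(Y;\C_Y)$ is zero, so that $e$ is surjective; and second, to compute the image of $\delta\colon H^0(Y;\C_Y)\to H^1(Y;\Omega_Y^{1,cl})$, which is exactly the subgroup we want to quotient by.

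First I would treat the vanishing of the target $H^1(Y;\C_Y)$. This is immediate from the hypothesis $b_1(Y)=0$, since $H^1(Y;\C_Y)$ is a $\C$-vector space of dimension $b_1(Y)=0$. Hence the portion of~\eqref{eq:longOmega} we care about becomes the short exact sequence
$$
0 \to \im\delta \to H^1(Y;\Omega_Y^{1,cl}) \to[e] H^1(Y;\Omega_{Y}^{1,cl}(0)) \to 0,
$$
giving $H^1(Y;\Omega_{Y}^{1,cl}(0))\simeq H^1(Y;\Omega_Y^{1,cl})/\im\delta$.

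The remaining point, and the one requiring the K\"ahler and dimension hypotheses, is that $\delta$ is \emph{injective}, so that $\im\delta\simeq H^0(Y;\C_Y)$ and the quotient is the stated $H^1(Y;\Omega_Y^{1,cl})/H^0(Y;\C_Y)$. As recalled just before Lemma~\ref{lem:Ye}, the coboundary sends $\lambda\in H^0(Y;\C_Y)$ to $\lambda\, a(\shl)$, where $a(\shl)\in H^1(Y;\Omega_Y^{1,cl})$ is the Atiyah class of the contact line bundle. Since $Y$ is connected (being a contact manifold — we should note this, or restrict to the connected case), $H^0(Y;\C_Y)\simeq\C$, and $\delta$ is injective precisely when $a(\shl)\neq 0$ in $H^1(Y;\Omega_Y^{1,cl})$. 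Now $Y$ is compact K\"ahler of $\dim Y\geq 3$, so Lemma~\ref{lem:Ye} gives $c_1(\shl)\neq 0$ in $H^2(Y;\C_Y)$; since $c_1(\shl)$ is the image of $a(\shl)$ under $H^1(Y;\Omega_Y^{1,cl})\to H^2(Y;\C_Y)$ (the map from~\eqref{eq:deRham}), non-vanishing of $c_1(\shl)$ forces $a(\shl)\neq 0$. Therefore $\delta$ is injective, and composing the two identifications yields
$$
\{(\she,\sigma)\text{-algebroids}\}_Y\simeq H^1(Y;\Omega_Y^{1,cl})/H^0(Y;\C_Y).
$$

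The main obstacle is the injectivity of $\delta$, i.e. the nonvanishing of the Atiyah class; but this is exactly what Lemma~\ref{lem:Ye} (via $c_1(\shl)\neq 0$) is designed to supply, so once that input is granted the argument is a short diagram chase through~\eqref{eq:longOmega} together with the elementary observation that $b_1(Y)=0$ kills the obstruction term $H^1(Y;\C_Y)$. One should take care to state that $Y$ is connected so that $H^0(Y;\C_Y)\simeq\C$ and the map $\lambda\mapsto\lambda a(\shl)$ is genuinely injective rather than merely having image a line.
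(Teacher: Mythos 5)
Your proposal is correct and follows essentially the same route as the paper's proof: invoke Theorem~\ref{th:class}, then use the long exact sequence~\eqref{eq:longOmega}, killing $H^1(Y;\C_Y)$ via $b_1(Y)=0$ and deducing injectivity of $\delta$ from $a(\shl)\neq 0$, which comes from Lemma~\ref{lem:Ye} since $c_1(\shl)$ is the image of the Atiyah class. Your added remark about connectedness (so that $H^0(Y;\C_Y)\simeq\C$ and non-triviality of $\delta$ gives injectivity) is a harmless refinement of the same argument.
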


\begin{proof}
Recall that $\beta(\frac{i}{2\pi}a(\shv))=c_1(\shv)$ for any line bundle $\shv$, where
$\beta\colon H^1(Y;\Omega_Y^{1,cl})\to H^2(Y;\C_Y)$ is the coboundary map induced by the exterior differential. By Lemma~\ref{lem:Ye}, one gets $a(\shl)\neq 0$ in $H^1(Y;\Omega_Y^{1,cl})$, so that the map $\delta$ in~\eqref{eq:longOmega} is non-trivial, hence injective. One then uses Theorem~\ref{th:class}.
\end{proof}
Note that, for $Y$ compact K\"ahler of $\dim Y\geq 3$, one also has
$$
\sect(Y;\stkAut[(\E,\sigma)](\stke_Y))\approxto[\pmb s] H^0(Y;\Omega_{Y}^{1,cl}(0)) \isofrom H^0(Y;\Omega_Y^{1,cl}) \simeq H^{1,0}(Y)
$$ 
where the last isomorphism follows from the fact that every global holomorphic form on $Y$ is closed.

For contact Fano manifolds the previous classification becomes very simple.
Recall that a Fano manifold is a compact complex manifold such that the dual of the canonical bundle is ample.

\begin{proposition}\label{pr:Fano}
Let $Y$ be connected and Fano of $\dim Y \geq 3$.
\begin{itemize}
\item[(i)] If $b_2(Y) \geq 2$, then
$$
\{(\E,\sigma)\text{-algebroids}\}_Y \simeq \C
$$
\item[(ii)] If $b_2(Y) = 1$, then, up to equivalence, $\stke_Y$ is the unique $(\E,\sigma)$-algebroid on $Y$.
\end{itemize}
\end{proposition}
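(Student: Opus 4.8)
The plan is to specialize Proposition \ref{pr:Kaheler} using the extra structure afforded by the Fano hypothesis. First I would note that a Fano manifold $Y$ has $H^i(Y;\O_Y)=0$ for all $i>0$ by the Kodaira vanishing theorem (since $\Omega_Y^{\dim Y}\tens \Omega_Y^{\tens -1}=\O_Y$ and $-\Omega_Y$ is ample). In particular $H^1(Y;\O_Y)=0$, so $b_1(Y)=0$ and Proposition \ref{pr:Kaheler} applies: $\{(\she,\sigma)\text{-algebroids}\}_Y \simeq H^1(Y;\Omega_Y^{1,cl})/H^0(Y;\C_Y)$. Moreover, the exact sequence \eqref{eq:deRham} together with $H^1(Y;\O_Y)=H^2(Y;\O_Y)=0$ gives an isomorphism $H^1(Y;\Omega_Y^{1,cl})\isoto H^2(Y;\C_Y)$. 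Since $Y$ is connected, $H^0(Y;\C_Y)\simeq\C$, and since $Y$ is Fano (hence projective, hence of Hodge type with $H^2(Y;\C_Y)=H^{1,1}(Y)$ as $H^{2,0}(Y)=\overline{H^0(Y;\Omega_Y^2)}=0$ again by vanishing), we have $\dim_\C H^2(Y;\C_Y)=b_2(Y)$.

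Next I would identify the map $\delta$ explicitly. By the discussion preceding \eqref{eq:longOmega}, the composite $H^0(Y;\C_Y)\to[\delta] H^1(Y;\Omega_Y^{1,cl})\isoto H^2(Y;\C_Y)$ sends $\lambda$ to $\lambda\,c_1(\shl)$, and by Lemma \ref{lem:Ye} (applicable since $Y$ is Fano, hence compact Kähler, of $\dim Y\geq 3$) one has $c_1(\shl)\neq 0$, so $\delta$ is injective with image the line $\C\cdot c_1(\shl)$. Therefore
$$
\{(\she,\sigma)\text{-algebroids}\}_Y \simeq H^2(Y;\C_Y)\big/\C\cdot c_1(\shl),
$$
a $\C$-vector space of dimension $b_2(Y)-1$. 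For part (i), when $b_2(Y)\geq 2$ this is a nonzero finite-dimensional $\C$-vector space; to get the clean statement $\simeq\C$ one invokes that a Fano $n$-fold with a contact structure has $b_2(Y)\leq 2$ (this is a known structure result: by LeBrun--Salamon and Beauville, a contact Fano manifold is either $\mathbb{P}(T^*X)$ for some $X$, which has $b_2=b_2(X)+1$, or has $b_2=1$; and in the first case the homogeneous coordinate description forces $b_2(Y)=2$ when it is not already of Picard rank one). Hence $b_2(Y)\geq 2$ actually forces $b_2(Y)=2$ and the quotient is one-dimensional. For part (ii), when $b_2(Y)=1$ the quotient above vanishes, so there is a unique $(\she,\sigma)$-algebroid up to isomorphism, namely $\stke_Y$ (equivalently, $\astk{\D_{\Omega_Y^{\tens 1/2}}}$; note $\Omega_Y^{\tens 1/2}$ exists since $\shl^{\tens -(n+1)}\simeq\Omega_Y$ and $b_2(Y)=1$ with the contact structure pins down a square root).

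The main obstacle I anticipate is part (i): getting from "the quotient $H^2(Y;\C_Y)/\C c_1(\shl)$ is a nonzero finite-dimensional vector space" to the precise answer $\C$ requires the classification of contact Fano manifolds, i.e. that $b_2(Y)\in\{1,2\}$, which is external input (LeBrun, Kebekus--Peternell--Sommese--Wi\'sniewski, Beauville) rather than something derivable from the cohomological machinery of this paper. If one is willing to cite that classification as a black box, the rest is routine diagram-chasing through \eqref{eq:longOmega} and \eqref{eq:deRham}. Part (ii) is then immediate and essentially formal.
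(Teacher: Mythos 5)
Your proposal is correct and follows essentially the same route as the paper: vanishing of $H^i(Y;\O_Y)$ for $i\geq 1$ together with the de Rham sequence \eqref{eq:deRham} and Proposition \ref{pr:Kaheler} (whose proof already contains the injectivity of $\delta$ via Lemma \ref{lem:Ye}) give $\{(\she,\sigma)\text{-algebroids}\}_Y\simeq\C^{b_2(Y)-1}$, and part (i) is then settled by external classification results, exactly as in the paper. The only point to tighten is your citation for (i): the precise input used (\cite[Corollary 4.2]{LeBrun-Salamon1994}) is that a Fano contact manifold with $b_2(Y)\geq 2$ is contact isomorphic to $P^*\mathbb P$ for a complex projective space $\mathbb P$, so $b_2(Y)=b_2(\mathbb P)+1=2$ by Leray--Hirsch, rather than the vaguer \emph{homogeneous coordinate} argument you sketch.
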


\begin{proof}
Since $Y$ is compact and connected, then $H^0(Y;\O_Y)\simeq\C$. By classical results on Fano manifolds $H^i(Y;\O_Y)=0$ for $i\geq 1$ hence, using the  long exact sequence in cohomology obtained from~\eqref{eq:deRham},
one gets $H^i(Y;\Omega_Y^{1,cl})\simeq H^{i+1}(Y;\C_Y)$ for $i\geq 0$. 
Moreover $b_1(Y)=0$, as $Y$ is simply connected.

Being projective, Fano manifolds are K\"ahler. By Proposition~\ref{pr:Kaheler}, one thus gets an isomorphism 
$$
\{(\E,\sigma)\text{-algebroids}\}_Y \simeq \C^{b_2(Y)-1}
$$ 
and (ii) follows.
If $b_2(Y)\geq 2$, then $Y$ is contact isomorphic to $P^*\mathbb P$ for some complex projective space $\mathbb P$ (see~\cite[Corollary 4.2]{LeBrun-Salamon1994}). Since $b_2(P^*\mathbb P)= b_2(\mathbb P) + 1$ by the Leray-Hirsch theorem and $b_2(\mathbb P)=1$, then (i) follows.
\end{proof}

\begin{example}\label{ex:q-proj}
The complex projective space $\mathbb P^{2n+1}$ is a contact Fano 
manifold (cf. Example~\ref{ex:contact} (i)) with $b_2(\mathbb P^{2n+1})=1$. By Proposition~\ref{pr:Fano}, up to equivalence, $\stke_{\mathbb P^{2n+1}}$ is the unique $(\E,\sigma)$-algebroid on $\mathbb P^{2n+1}$ for $n\geq 1$.
\end{example}

By a result of Demailly~\cite{Demailly2002}, any complex contact manifold $Y$ 
which is compact K\"ahler with $b_2(Y)=1$ is Fano. Hence:

\begin{corollary}
Let $Y$ be connected, compact K\"ahler of $\dim Y\geq 3$ with $b_2(Y)=1$. Then, up to equivalence, $\stke_Y$ is the unique $(\E,\sigma)$-algebroid on $Y$.
\end{corollary}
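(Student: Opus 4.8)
The plan is to combine the two preceding results: Demailly's theorem, which says that a compact Kähler complex contact manifold $Y$ with $b_2(Y)=1$ and $\dim Y\geq 3$ is Fano, together with Proposition~\ref{pr:Fano}~(ii), which handles the Fano case with $b_2(Y)=1$. So the proof is essentially a one-line deduction, and the only thing to check is that the hypotheses line up. First I would invoke Demailly~\cite{Demailly2002} to upgrade the Kähler hypothesis to the Fano property; this uses precisely the assumptions $\dim Y\geq 3$ and $b_2(Y)=1$. Then $Y$ is connected (a Fano manifold is connected, or one may assume $Y$ connected from the start since the statement is about a single contact manifold), compact, and $\dim Y\geq 3$, so Proposition~\ref{pr:Fano}~(ii) applies directly and yields the uniqueness, up to isomorphism, of the $(\she,\sigma)$-algebroid on $Y$.

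Concretely, the argument runs as follows. By~\cite{Demailly2002}, since $Y$ is a compact Kähler complex contact manifold with $\dim Y\geq 3$ and $b_2(Y)=1$, it is Fano. A Fano manifold is projective, hence connected (or we take this as part of the hypothesis); in any case $Y$ satisfies all the hypotheses of Proposition~\ref{pr:Fano}. Since $b_2(Y)=1$, part (ii) of that proposition gives that, up to isomorphism, there is a unique $(\she,\sigma)$-algebroid on $Y$. This is exactly the assertion of the corollary.

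There is no real obstacle here: the corollary is a formal consequence of two results already in place, and the only point requiring a moment's care is verifying that the numerical and dimension hypotheses ($\dim Y\geq 3$, $b_2(Y)=1$, compact, Kähler) are exactly those needed to chain Demailly's theorem into Proposition~\ref{pr:Fano}~(ii). One might also remark, for the reader's benefit, that the unique algebroid is the canonical one $\stke_Y$, and that by Proposition~\ref{pr:Kaheler} (whose hypothesis $b_1(Y)=0$ holds automatically for Fano, hence for $Y$ here) the underlying classifying group $H^1(Y;\Omega_Y^{1,cl})/H^0(Y;\C_Y)$ is reduced to a point because $b_2(Y)=1$ forces $H^1(Y;\Omega_Y^{1,cl})\simeq H^2(Y;\C_Y)\simeq\C$ and the map $\delta$ is injective with image all of $\C$ — but this is just re-deriving Proposition~\ref{pr:Fano}~(ii) and is not needed for the proof.
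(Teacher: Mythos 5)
Your proof is correct and is exactly the paper's argument: the corollary is stated as an immediate consequence ("Hence:") of Demailly's result that a compact K\"ahler contact manifold with $b_2(Y)=1$ is Fano, combined with Proposition~\ref{pr:Fano}~(ii). Your verification of the hypotheses (connectedness, $\dim Y\geq 3$) matches what the paper leaves implicit, so there is nothing to add.
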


If instead $b_2(Y)\geq 2$ and $Y$ projective, we may use the following result to reduce the classification of $(\E,\sigma)$-algebroids to Corollary~\ref{cor:Boutet}.

\begin{theorem}[cf.~\cite{KPSW2000,Demailly2002}]\label{th:DKPSW}
If $Y$ is projective with $b_2(Y)\geq 2$, then it is contact isomorphic to $P^*M$ for some 
projective manifold $M$.
\end{theorem}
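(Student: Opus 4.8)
The plan is to prove the theorem by running Mori theory on $Y$ and realising it as a projectivised cotangent bundle, following the strategy of~\cite{KPSW2000} (with the K\"ahler refinements of~\cite{Demailly2002}); I only sketch the architecture here. The starting point is the numerical constraint imposed by the contact structure: by Definition-Proposition~\ref{def-prop:contact form}(iii) one has $\dim Y = 2n+1$ and $\Omega_Y \simeq \shl^{\tens -(n+1)}$, so that the anticanonical bundle is the positive power $\shl^{\tens (n+1)}$ of the contact line bundle $\shl$; in particular every rational curve $C$ with $-K_Y\cdot C>0$ satisfies $\shl\cdot C>0$, and the birational geometry of $Y$ is governed by the single class $\shl$. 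First I would invoke the (nontrivial) fact that a projective contact manifold is uniruled --- this is where the contact structure and the positivity of $\shl$ of~\cite{Demailly2002} enter, via Miyaoka--Mori-type arguments --- so that the cone theorem applies. Since $b_2(Y)\geq 2$, the Mori cone $\overline{NE}(Y)$ then carries a $K_Y$-negative extremal ray whose contraction $\varphi\colon Y\to Z$ onto a normal projective variety is a nontrivial morphism.

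The crucial --- and hardest --- step is to show that $\varphi$ is a smooth $\mathbb{P}^{n}$-fibration over a smooth projective manifold $M:=Z$, with $\shl$ of relative degree $1$. Here one plays off the non-degeneracy of $d\alpha|_{H}$ (Definition-Proposition~\ref{def-prop:contact form}(i)) against the deformation theory of rational curves: any subvariety of $Y$ along which the contact distribution $H$ is ``isotropic'' for its $\shl$-valued symplectic form is forced to have dimension $\leq n$, whereas a bend-and-break analysis of the rational curves sweeping out the fibers of $\varphi$ --- or its exceptional locus, were $\varphi$ birational --- shows that $\varphi$ cannot be birational and, being of fiber type, must be an equidimensional projective-space bundle whose fibers are isotropic, hence of dimension exactly $n$. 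I would quote this step from~\cite{KPSW2000,Demailly2002} rather than reprove it; it is the technical heart of the theorem. (In the sub-case where $Y$ is moreover Fano one may instead invoke~\cite[Corollary 4.2]{LeBrun-Salamon1994}, which even identifies $M$ with a projective space.)

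Finally, suppose $\varphi\colon Y\to M$ is a smooth $\mathbb{P}^{n}$-bundle with $\shl$ of relative degree $1$. Writing $Y=\mathbb{P}(\shf)$ for a locally free $\O_{M}$-module $\shf$ of rank $n+1$ chosen so that $\O_{\mathbb{P}(\shf)}(1)\simeq\shl$ --- possible after a line-bundle twist, which does not change $\mathbb{P}(\shf)$ --- one notes that there are no nonzero $\O$-linear maps $T\mathbb{P}^{n}\to\O_{\mathbb{P}^{n}}(1)$, so the relative tangent bundle of $\varphi$ lies in the contact distribution $H$ and $\alpha$ vanishes on the fibers of $\varphi$. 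Consequently the canonical symplectification $\widetilde Y$, the complement of the zero-section in the total space of $\shl^{\tens -1}$, is the complement of the zero-section in the total space of the vector bundle $\shf$, and $\tilde\omega$ exhibits it as $\dot T^{*}M$ with its standard symplectic structure. Hence $\shf\simeq T^{*}M$ and $\alpha$ is the projectivisation of the Liouville form; this is exactly the canonical contact structure on $P^{*}M$, and we obtain the contact isomorphism $Y\simeq P^{*}M$ with $M$ projective, proving Theorem~\ref{th:DKPSW}.
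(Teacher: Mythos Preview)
The paper does not give its own proof of this statement: Theorem~\ref{th:DKPSW} is stated with a ``cf.''\ attribution to~\cite{KPSW2000,Demailly2002} and is used as a black box, with no argument supplied beyond the remark that the K\"ahler threefold case is handled in~\cite{Frantzen-Peternell2010}. Your sketch is a faithful outline of the Mori-theoretic strategy actually carried out in those references --- uniruledness, contraction of a $K_Y$-negative extremal ray, exclusion of the birational case via the Legendrian constraint, and identification of the resulting $\mathbb{P}^n$-bundle with $P^*M$ --- so there is nothing to compare against in the present paper; you are simply supplying what the author chose to cite.
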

In fact, according to~\cite{Frantzen-Peternell2010}, the same result holds replacing "projective" with "compact K\"ahler threefold".

\section{$\E$-algebras vs. $(\E,\sigma)$-algebroids}\label{section:vs}

Let $Y$ be a complex  contact manifold of dimension $2n+1$, with associated contact line bundle 
$\shl$.

Recall that the functor $\astk{(\cdot)}$ from algebras to linear stacks defines a map
\begin{equation}\label{eq:astk}
(\cdot)^+\colon \{\E \text{-algebras}\}_Y\to \{(\E,\sigma)\text{-algebroids}\}_Y
\end{equation}
where the left-hand term is the (possibly empty) set of isomorphism classes of 
$\E$-algebras on $Y$.

Let $\stka$ be a $(\E,\sigma)$-algebroid. Given a global object $L\in \stka(Y)$, the sheaf 
$\shendo[\stka](L)$ of its endomorphisms is an $\E$-algebra, and the fully faithful functor 
$\astk{\shendo[\stka](L)}\to\stka$ is an equivalence of $(\E,\sigma)$-algebroids.
It follows that~~\eqref{eq:astk} is surjective if and only if any $(\E,\sigma)$-algebroid 
has a global object. In this situation, we may make the set $\{\E \text{-algebras}\}_Y$ pointed by the 
class of any $\E$-algebra $\sha$ satisfying $\astk \sha \approx\stke_Y$. Note that, by~\cite[Corollary 3.4]{Polesello2007}, we may always choose $\sha$ endowed with an anti-involution, locally corresponding to the operation of taking the formal adjoint of microdifferential operators.

Let $Y=P^*M$, for a complex manifold $M$. By definition of the functor $\pmb s$ in the proof of Theorem~\ref{th:class}, the map~\eqref{eq:astk} identifies with the morphism
$$
H^1(P^*M;\shaut[\C-alg](\E_{P^*M})) \to H^1(P^*M;\Omega_{P^*M}^{1,cl}(0))
$$
in~\eqref{eq:longexact}. If $\dim M \geq 2$, by the commutative diagram~\eqref{eq:t} and the isomorphism~\eqref{eq:R^1}, we get that, up to equivalence, any $(\E,\sigma)$-algebroid on $P^*M$ is of the form $\astk{\E_{\shn}}$ for a unique $\C^\times$-twisted line bundle $\shn$ on $M$. Hence:

\begin{proposition}\label{prop:Boutet}
Let $Y=P^*M$ with $\dim M \geq 2$. Then~\eqref{eq:astk} is surjective.
\end{proposition}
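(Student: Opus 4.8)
The plan is to show that every $(\she,\sigma)$-algebroid on $P^*M$ admits a global object when $\dim M \geq 2$; by the remark preceding the statement this is equivalent to surjectivity of~\eqref{eq:astk}. I would argue via the classification already in hand. By Corollary~\ref{cor:Boutet} the $(\she,\sigma)$-algebroids on $P^*M$ are classified by $H^1(M;\Omega_M^{1,cl})$, and by Proposition~\ref{pr:E-algebras} the $\she$-algebras on $P^*M$ are classified by $H^1(P^*M;\she_{M,1})\times H^1(M;\Omega_M^{1,cl})$. So at the level of classifying objects there is an obvious candidate section of~\eqref{eq:astk}: send the class in $H^1(M;\Omega_M^{1,cl})$ to the pair consisting of the trivial class in $H^1(P^*M;\she_{M,1})$ and the given class in $H^1(M;\Omega_M^{1,cl})$. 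The real content is to check that this set-theoretic section is actually realized by the functor $(\cdot)^+$, i.e. that the $\she$-algebra it names has classifying algebroid equal to the given one.

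Concretely, I would start from a $(\she,\sigma)$-algebroid $\stka$ described, as in the discussion before Notation, by a non-abelian cocycle $(\{\she_{V_i}\},\{\varphi_{ij}\},\{P_{ijk}\})$ on a cover by Darboux charts, with $\varphi_{ij}\colon\she_{V_j}\to\she_{V_i}$ a $\C$-algebra isomorphism, $P_{ijk}\in\she_{V_{ijk}}$ invertible of order $0$ with $\sigma_0(P_{ijk})=1$, subject to $\varphi_{ij}\circ\varphi_{jk}=\ad(P_{ijk})\circ\varphi_{ik}$ and $P_{ijk}P_{ikl}=\varphi_{ij}(P_{jkl})P_{ijl}$. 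The task is to modify this cocycle within its equivalence class so that the $P_{ijk}$ become all equal to $1$; once $\{P_{ijk}\}=\{1\}$, the collection $\{\varphi_{ij}\}$ is a genuine $1$-cocycle of $\C$-algebra isomorphisms and glues the local $\she_{V_i}$ into a global $\she$-algebra $\sha$ on $P^*M$ with $\astk\sha\approx\stka$. To kill the $P_{ijk}$ I would use the exactness of Lemma~\ref{lemma:key}, together with the vanishing input that makes the $H^1(P^*M;\she_{M,1})$-factor disappear in the $\dim M\geq2$ regime. The point is that the obstruction to trivializing the $P_{ijk}$ lives, after passing to symbols along the isomorphism $s$ of~\eqref{eq:key}, in $H^1(P^*M;\Omega_{P^*M}^{1,cl}(0))$, and by the diagram~\eqref{eq:pi} and Corollary~\ref{cor:Boutet} this group is the pull-back of $H^1(M;\Omega_M^{1,cl})$; but that pull-back class is exactly the class of $\stka$ itself, which by the classification can be absorbed into the $\varphi_{ij}$ (this is where $t$ in~\eqref{eq:t} being split matters). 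What is left after absorbing it is a cocycle with values in $\she_{M,1}$, and since $\dim M\geq2$ forces $H^1(P^*M;\she_{M,1})$ to be trivial as well (via $\pi^*$ being an isomorphism, as in the proof of Proposition~\ref{pr:E-algebras}), the residual gerbe is trivial, i.e. $\stka$ has a global object.

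The main obstacle I anticipate is keeping the bookkeeping of the two layers straight: the "algebra-automorphism" layer governed by $\shaut[\C-alg](\she_M)$ and the "$2$-morphism" layer governed by $\she_{M,1}$, and making sure that the modification which turns the cocycle of the algebroid into a cocycle of an honest $\she$-algebra is compatible with \emph{both} the filtration and the graded trivialization $\pmb\nu$. Concretely, one must check that when the $P_{ijk}$ are removed the resulting $\{\varphi_{ij}\}$ are still compatible with the symbol maps $\sigma_m$ (so that the glued $\sha$ really has $\gr(\sha)\simeq\O^{hom}_{P^*M}$ and $\astk\sha$ carries the right $\pmb\nu$), which comes down to the fact, already recorded, that every $\C$-algebra automorphism of $\she_M$ is filtered with trivial graded (Lemma~\ref{lemma:E-morphism}). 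A secondary, more bureaucratic point is the passage from the stacky cocycle description to the sheaf-level statement "has a global object"; this is formal but needs the Yoneda identification of $\astk\sha$ with locally free rank-one right $\sha$-modules, so that a global object of $\astk\sha$ is literally such a module, hence $\stka\approx\astk{\shendo[\stka](L)}$. Once these compatibilities are in place, surjectivity of~\eqref{eq:astk} is immediate.
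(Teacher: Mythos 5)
Your opening moves are sound and in fact reproduce the paper's argument: under the identifications~\eqref{eq:class} and Theorem~\ref{th:class}, the map~\eqref{eq:astk} becomes $s\colon H^1(P^*M;\shaut[\C-alg](\she_M))\to H^1(P^*M;\Omega_{P^*M}^{1,cl}(0))$, and since the diagram~\eqref{eq:t} commutes with $\pi^*$ an isomorphism for $\dim M\geq 2$, every class of a $(\she,\sigma)$-algebroid is of the form $s(t(c_0))=\pi^*(c_0)$, realized by the honest $\she$-algebra $\she_\shn$. The problem is your concluding step. You claim that ``$\dim M\geq 2$ forces $H^1(P^*M;\she_{M,1})$ to be trivial \ldots via $\pi^*$ being an isomorphism, as in the proof of Proposition~\ref{pr:E-algebras}''. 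This is a non sequitur and, as stated, false: the isomorphism $\pi^*$ concerns $\Omega^{1,cl}$-cohomology and only yields the splitting of~\eqref{eq:longexact}, i.e.\ the product decomposition of Proposition~\ref{pr:E-algebras}, which \emph{retains} the factor $H^1(P^*M;\she_{M,1})$. That factor is not zero in general: the ``exotic'' phenomena in Section~\ref{section:vs} occur precisely because $H^1(P^*M;\widehat\she_{M,1})\neq 0$ when $M$ is a compact K\"ahler surface other than $\mathbb P^2$ (and for curves), and even for $\dim M\geq 3$ the vanishing is only known in the formal case. A related confusion: after matching symbol classes, the residual datum is the twisted $2$-cochain $\{P_{ijk}\}$, so its triviality would not be governed by $H^1(P^*M;\she_{M,1})$ in any case.

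Fortunately the step you base on this claim is unnecessary, and this is exactly where the paper stops. The group $H^1(P^*M;\she_{M,1})$ measures the failure of \emph{injectivity} of~\eqref{eq:astk}, not of surjectivity: once you know $c(\stka)=\pi^*(c_0)=s([\she_\shn])$, Theorem~\ref{th:class} (equivalently, Lemma~\ref{lemma:key}: $\ad$ is injective, so the $2$-group $[\she_{V,1}\to\shaut[\C-alg](\she_V)]$ is equivalent to the discrete sheaf $\Omega_{P^*M}^{1,cl}(0)$) already gives that equal classes in $H^1(P^*M;\Omega_{P^*M}^{1,cl}(0))$ imply equivalent $(\she,\sigma)$-algebroids, whence $\stka\approx\she_\shn^+$ and $\stka$ has a global object. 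Replace your last paragraph's appeal to a (false) vanishing by this appeal to the injectivity of the classification map of Theorem~\ref{th:class}, and your proof collapses to the paper's one-line argument: the diagonal arrow in~\eqref{eq:t} for $i=1$ is an isomorphism, so $s$ is surjective.
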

More precisely, it follows from Proposition~\ref{pr:E-algebras} that in the above situation the map~\eqref{eq:astk} 
identifies with the projection
$$
H^1(P^*M;\E_{P^*M,1})\times H^1(M;\Omega_M^{1,cl}) \to H^1(M;\Omega_M^{1,cl})
$$

Combining the above result with Theorem~\ref{th:DKPSW}, we get
\begin{corollary}\label{cor:DKPSW}
Let $Y$ be projective of $\dim Y\geq 3$ (or a compact K\"ahler threefold) with $b_2(Y)\geq 2$. 
Then, for any $(\E,\sigma)$-algebroid $\stka$, there exists an $\E$-algebra 
$\sha$ such that $\stka\approx \sha^+$.
\end{corollary}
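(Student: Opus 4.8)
The plan is to combine Proposition~\ref{prop:Boutet} with Theorem~\ref{th:DKPSW} in the obvious way. By Theorem~\ref{th:DKPSW} (together with the extension to compact Kähler threefolds mentioned after it, due to~\cite{Frantzen-Peternell2010}), the hypothesis that $Y$ is projective of $\dim Y\geq 3$ (or a compact Kähler threefold) with $b_2(Y)\geq 2$ implies that there is a contact isomorphism $\varphi\colon Y \isoto P^*M$ for some projective manifold $M$. Since $\dim Y = 2\dim M - 1 \geq 3$, we have $\dim M \geq 2$, so Proposition~\ref{prop:Boutet} applies to $P^*M$.

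The argument then proceeds as follows. Given a $(\she,\sigma)$-algebroid $\stka$ on $Y$, transport it via $\varphi$ to a $(\she,\sigma)$-algebroid $\varphi_*\stka$ on $P^*M$; here one uses that a contact isomorphism lifts to a $\C^\times$-equivariant symplectic transformation of the canonical symplectifications, hence identifies the respective quantizations $\stke_Y$ and $\stke_{P^*M}$ and their graded stacks $\O^{hom\,+}_Y$ and $\O^{hom\,+}_{P^*M}$, so that the notion of $(\she,\sigma)$-algebroid is preserved. By Proposition~\ref{prop:Boutet}, the map~\eqref{eq:astk} for $P^*M$ is surjective, so there exists an $\she$-algebra $\shb$ on $P^*M$ with $\astk\shb \approx \varphi_*\stka$. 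Pulling back, $\sha = \opb\varphi\shb$ is an $\she$-algebra on $Y$ (being locally isomorphic to $\opb i\she_N$ for Darboux charts obtained by composing with $\varphi$), and since $\astk{(\cdot)}$ commutes with the equivalence induced by $\varphi$ one obtains $\astk\sha \approx \stka$.

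I do not expect any serious obstacle here: the statement is essentially a formal concatenation of two results already established (or cited) in the paper, and the only thing to check carefully is that a contact isomorphism genuinely induces an equivalence of the categories of $(\she,\sigma)$-algebroids compatible with the functor $\astk{(\cdot)}$ — but this is immediate from the functoriality of the constructions in Sections~\ref{section:contact}--\ref{section:uniq} under contact transformations (which are local isomorphisms lifting to equivariant symplectomorphisms). If one wished to be completely explicit, one could instead phrase the whole argument cohomologically, invoking the isomorphism $H^1(Y;\Omega^{1,cl}_Y(0)) \simeq H^1(P^*M;\Omega^{1,cl}_{P^*M}(0))$ induced by $\varphi$ and the surjectivity of $s$ in~\eqref{eq:t}, but the direct transport argument is cleaner.
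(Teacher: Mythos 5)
Your proposal is correct and is essentially the paper's own argument: the paper proves this corollary precisely by combining Theorem~\ref{th:DKPSW} (with the Frantzen--Peternell extension for compact K\"ahler threefolds) with Proposition~\ref{prop:Boutet}, noting that $\dim Y\geq 3$ forces $\dim M\geq 2$. Your additional remarks on transporting $(\she,\sigma)$-algebroids along the contact isomorphism just make explicit what the paper leaves implicit.
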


\subsection{The formal case}
Even when $\E$-algebras exist, their classification in general differs from that of
$(\E,\sigma)$-algebroids.
In the formal case ({\em i.e.} dropping the growth condition~\eqref{eq:estmicrod}),
the following criterion\footnote{Conjecturally, this criterion works also in the 
holomorphic case.} gives conditions for the existence of 
$\widehat\E$-algebras (\emph{i.e.} formal $\E$-algebras) and to ensure
that their classification coincides with that of $(\widehat\E,\sigma)$-algebroids 
(\emph{i.e.} formal $(\E,\sigma)$-algebroids). 

\begin{theorem}\label{th:stkVSsh}
\begin{itemize}

\item[(i)] If $H^2(Y;\shl^{\tens k})=0$ for any $k\leq -1$, then for any 
$(\widehat\E,\sigma)$-algebroid $\stka$ there exists an $\widehat\E$-algebra $\sha$ such that 
$\stka\approx \sha^+$.

\item[(ii)] If moreover  $H^1(Y;\shl^{\tens k})=0$ for any $k\leq -1$, there is an isomorphism of pointed sets
$$
(\cdot)^+\colon\{ \widehat\E \text{-algebras}\}_Y\isoto  \{(\widehat\E,\sigma)\text{-algebroids}\}_Y
$$
\end{itemize}
\end{theorem}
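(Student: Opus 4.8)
The plan is to work with the non-abelian cocycle description of $(\widehat\she,\sigma)$-algebroids developed in Section~\ref{section:uniq} and to obstruct/lift the existence of a global object, then to compute the fibers of $(\cdot)^+$. Recall that a $(\widehat\she,\sigma)$-algebroid $\stka$ is encoded (up to equivalence) by a cocycle $(\{\she_{V_i}\},\{\varphi_{ij}\},\{P_{ijk}\})$ with $\varphi_{ij}\colon \she_{V_j}\isoto \she_{V_i}$ a $\C$-algebra isomorphism on $V_{ij}$, $P_{ijk}\in\she_{V_{ijk}}$ invertible of order $0$ with $\sigma_0(P_{ijk})=1$, subject to $\varphi_{ij}\circ\varphi_{jk}=\ad(P_{ijk})\circ\varphi_{ik}$ and $P_{ijk}P_{ikl}=\varphi_{ij}(P_{jkl})P_{ijl}$. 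Producing an $\widehat\she$-algebra $\sha$ with $\sha^+\approx\stka$ amounts to producing, after refining the cover, local objects $L_i\in\stka(V_i)$ together with coherent isomorphisms, equivalently to rectifying the $\{P_{ijk}\}$ to $1$ by modifying the $\varphi_{ij}$ through inner automorphisms $\ad(Q_{ij})$ with $Q_{ij}\in F_0\she_{V_{ij}}^\times$, $\sigma_0(Q_{ij})=1$; the obstruction to doing so lives in an $H^2$.

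First I would set up the deformation-theoretic filtration by order. Write $\she_{V,1}=\{Q\in F_0\she_V:\sigma_0(Q)=1\}$ as in Section~\ref{section:E-alg}, and filter it by $\she_{V,1}\supset (1+F_{-1}\she_V)\supset (1+F_{-2}\she_V)\supset\cdots$, with successive quotients $(1+F_{-k}\she_V)/(1+F_{-k-1}\she_V)\simeq \gr_{-k}\she_V\simeq \O_{P^*M}(-k)$, i.e. locally $\shl^{\tens -k}$. So the sheaf of groups $\she_{,1}$ on $Y$ (for a $(\she,\sigma)$-algebroid the relevant local group is intrinsic) has an inverse-limit filtration whose graded pieces are the abelian sheaves $\shl^{\tens -k}$, $k\geq 1$. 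The $\{P_{ijk}\}$, being of order $0$ with $\sigma_0=1$, are a $2$-cocycle valued in $\she_{,1}$; I would solve the rectification problem order by order. At stage $k$, having arranged $P_{ijk}\equiv 1 \bmod (1+F_{-k+1})$, the leading term $\sigma_{-k}(P_{ijk})\in\shl^{\tens -k}$ is a Čech $2$-cocycle (one checks the cocycle identity $P_{ijk}P_{ikl}=\varphi_{ij}(P_{jkl})P_{ijl}$ reduces modulo the next filtration step to the additive cocycle condition, using $\gr\varphi_{ij}=\id$); by the hypothesis $H^2(Y;\shl^{\tens -k})=0$ it is a coboundary, so after modifying the $Q_{ij}$ by a term of order $-k$ we may kill it and pass to stage $k+1$. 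Taking the limit (the process is order-wise, so convergence is automatic in the formal/completed setting) rectifies $\stka$ to $\sha^+$ for an honest $\widehat\she$-algebra $\sha$; this proves (i). The point requiring care is that $\varphi_{ij}$ itself only preserves the filtration with $\gr\varphi_{ij}=\id$ (Lemma~\ref{lemma:E-morphism}), so the twisting by $\varphi_{ij}$ acts trivially on the graded pieces and the obstruction classes genuinely live in the untwisted $H^2(Y;\shl^{\tens -k})$.

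For (ii), surjectivity of $(\cdot)^+$ is (i); injectivity means: if $\sha_1,\sha_2$ are $\widehat\she$-algebras with $\sha_1^+\approx\sha_2^+$, then $\sha_1\simeq\sha_2$. An equivalence $\astk{\sha_1}\approx\astk{\sha_2}$ is given locally by algebra isomorphisms $\varphi_i\colon \sha_1|_{V_i}\isoto\sha_2|_{V_i}$ with gluing data $\ad(P_{ij})\colon \varphi_i\comp\varphi_j^{-1}\Rightarrow\id$, where $P_{ij}\in\she_{,1}$; exactly as in~\eqref{eq:longexact}, the obstruction to replacing this by a global algebra isomorphism is the class of $\{P_{ij}\}$ in $H^1$ of $\she_{,1}$, and again filtering by order the graded obstructions lie in $H^1(Y;\shl^{\tens -k})$, $k\geq 1$, which vanish by hypothesis; so $\{P_{ij}\}$ can be rectified to the trivial cocycle and $\sha_1\simeq\sha_2$. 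One must also check the base point is preserved, i.e. that an $\sha$ with $\sha^+\approx\stke_Y$ maps to the class of $\stke_Y$ — immediate from the definition of the map. Finally, since the whole argument is carried out degree by degree on the graded pieces $\shl^{\tens -k}$, the same proof applies verbatim after dropping the growth estimate~\eqref{eq:estmicrod}, which is what the statement asserts.

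The main obstacle is the bookkeeping in the inductive step of (i): one must check that at each stage the leading symbol of the modified $2$-cochain is again a cocycle in $\shl^{\tens -k}$ (not merely in some twist), which uses both $\gr\varphi_{ij}=\id$ and the compatibility of the symbol maps $\sigma_{m}$ across the Darboux charts; and one must make sure that the successive modifications of the $Q_{ij}$ are themselves coherent enough to assemble, in the limit, into a well-defined $\widehat\she$-algebra rather than merely a formal cocycle. The analogous but easier point in (ii) is checking that the $H^1$-obstruction is the right graded object; everything else is a routine Čech computation.
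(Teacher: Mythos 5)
Your proposal is correct and follows essentially the same route as the paper: the paper's proof also encodes the algebroid by a non-abelian cocycle $(\{\widehat\she_{V_i}\},\{\varphi_{ij}\},\{P_{ijk}\})$ and kills the $P_{ijk}$ order by order, using at each stage that the symbol $\sigma'_{-k}(P_{ijk})$ is a \v{C}ech $2$-cocycle in $\shl^{\tens -k}$ (because $\gr\varphi_{ij}=\id$) whose class vanishes by hypothesis, with convergence automatic in the formal setting; part (ii) is likewise handled by the analogous $H^1$-obstruction argument. The only blemishes are notational (e.g. the meaning of ``$P_{ijk}\equiv 1\bmod(1+F_{-k+1})$''), not mathematical.
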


If $Y$ is exact, then the contact line bundle $\shl$ is trivial, so that the conditions (i) and (ii) in 
Theorem~\ref{th:stkVSsh} read as $H^2(Y;\O_Y)=0$ and $H^1(Y;\O_Y)=0$, respectively, and 
one may thus use (the formal analogue of) Corollary~\ref{cor:exact}. We refer 
to~\cite{Nest-Tsygan2001,Kontsevich2001,Yekutieli2005,Kaledin2006,Calaque-Halbout2011} for similar hypothesis in the framework of complex/algebraic symplectic/Poisson manifolds.

\begin{proof}[Proof of Theorem~\ref{th:stkVSsh}]
For any Darboux chart $i\colon Y\supset V \to P^*M$, set $\widehat\E_V=\opb i \widehat\E_{P^*M}$ and let $\sigma_{m} \colon F_{m}\widehat\E_{V}\to \shl^{\tens m}|_{V}$  denote the symbol map of order $m$. Set $\widehat\E_{V,1} = \{P\in F_0\widehat\E_V; \mbox{ $\sigma_0(P)=1$}\}\subset F_0\widehat\E_V^\times$. Then one easily checks that there is a well defined surjective morphism of groups
\begin{equation}
\label{eq:sub}
\tilde\sigma_{-1}\colon \widehat\E_{V,1} \to \shl^{\tens -1}, \quad P\mapsto \sigma_{-1}(P-1)
\end{equation}
If $\tilde\sigma_{-1}(P)=0$, then $P-1\in F_{-2}\widehat\E_V$ and one may define $\tilde\sigma_{-2}(P)=
\sigma_{-2}(P-1)\in \shl^{\tens -2}$. Recursively, let $m\in \Z_{<0}$ and suppose that $\tilde\sigma_{i}(P)=0$ for $-1\leq i \leq m+1$. Then $P-1\in F_m\widehat\E_V$ and one may define $\tilde\sigma_m(P)=\sigma_m(P-1)\in \shl^{\tens m}$. Clearly, $P=1$ if and only if $\tilde\sigma_m(P)=0$ for all $m\in \Z_{<0}$.

Let $\stka$ be a $(\widehat\E,\sigma)$-algebroid on $Y$. Recall from 
Section~\ref{subsection:algebroids} that it is described by a non-abelian cocycle $(\{\widehat\E_{V_i}\},\{\varphi_{ij}\}, \{P_{ijk}\})$, where $\shu = \{V_i\}_{i\in I}$ is an open cover of $Y$ by Darboux charts, $\varphi_{ij}\colon \widehat\E_{V_j}\to\widehat\E_{V_i}$ are isomorphisms of $\C$-algebras on $V_{ij}=V_i\cap V_j$ and $P_{ijk}\in\sect(V_{ijk};\widehat\E_{V_i,1})$ are such that
\begin{equation}
\label{eq:KasCoc}
\begin{cases}
\varphi_{ij}\circ \varphi_{jk} = \ad(P_{ijk})\circ \varphi_{ik}\\ 
P_{ijk} P_{ikl} = \varphi_{ij}(P_{jkl}) P_{ijl}
\end{cases}
\end{equation}
By applying $\tilde\sigma_{-1}$ to the second equation, we get a 2-cocycle $\{\tilde\sigma_{-1}(P_{ijk})\}$ with values in $\shl^{\tens -1}$. By hypothesis, $H^2(Y;\shl^{\tens -1})=0$. Therefore, up to shrink the cover, there exist operators $Q_{ij}\in\sect(V_{ij};\widehat\E_{V_i,1})$ satisfying $\tilde\sigma_{-1}(P_{ijk}) = \tilde\sigma_{-1}(Q_{jk}) + \tilde\sigma_{-1}(Q_{ij}) - \tilde\sigma_{-1}(Q_{ik})$ on $V_{ijk}$. 
Set 
$$
\begin{cases}
\varphi^1_{ij}=\ad ( \opb Q_{ij})\circ \varphi_{ij}\\ 
P^1_{ijk} = \varphi^1_{ij}(\opb Q_{jk}) \opb Q_{ij}P_{ijk}Q_{ik}
\end{cases}
$$
This gives a non-abelian cocycle $(\{\widehat\E_{V_i}\},\{\varphi^1_{ij}\}, \{P^1_{ijk}\})$ equivalent  
to~\eqref{eq:KasCoc} and satisfying  $\sigma_0(P^1_{ijk})=\tilde\sigma_{-1}(P^1_{ijk})=0$. (Refer for example to~\cite[Appendix A]{D'Agnolo-Polesello2011} for a summary on non-abelian cocycles for algebroids). We may thus define the 2-cocycle $\{\tilde\sigma_{-2}(P^1_{ijk})\}$ with values in $\shl^{\tens -2}$. By induction,  one gets that the non-abelian cocycle~\eqref{eq:KasCoc} is finally equivalent to a non-abelian cocycle of the form $(\{\widehat\E_{V_i}\},\{\tilde\varphi_{ij}\}, \tilde P_{ijk})$ with $\tilde\sigma_{m}(\tilde P_{ijk})=0$ for any $m\in  \Z_{<0}$. It follows that $\tilde P_{ijk}=1$, hence this represents an $\widehat\E$-algebra $\sha$ satisfying $\astk \sha\approx \stka$.

(ii) Let $\sha$ (resp. $\shb$) be an $\widehat\E$-algebra, and $\nu$ (resp. $\upsilon$) the corresponding graded isomorphism~\eqref{eq:symbol}. Let $\Phi\colon \astk{\sha} \to \astk{\shb}$ be an equivalence of $(\widehat\E,\sigma)$-algebroids. It is enough to show that there is a $\C$-algebra isomorphism\footnote{In fact, the proof will produce a $\C$-algebra isomorphism 
$f\colon \sha \isoto \shb$ and a filtered invertible transformation of functors 
$\alpha\colon \Phi \Rightarrow \astk f$ making the following diagram 
commutative
$$
\xymatrix@C3em{
\astk \upsilon \circ \stkGr(\Phi)\ar@{=>}[r]^-{\sim} 
\ar@{=>}[d]_-{\id_{\astk \upsilon}\circ \stkGr (\alpha)} & \astk\nu\\
\astk \upsilon\circ \stkGr(\astk f) \ar@{=>}[ur]_-{\sim} &}
$$} 
$\sha \isoto \shb$

Recall that $\astk\sha \subset \stkMod(\sha^\op)$ is identified to the sub-stack of locally  free modules of rank 1. Set $\shp =\Phi(\sha)$. Then $\shp$ is a locally  free $\shb^\op$-module of rank $1$ and $\Phi$ induces an isomorphism $\sha\isoto\shendo[\shb^\op](\shp)$ of $\C$-algebras. Moreover, since by definition $\Phi$ is filtered and there is a graded invertible transformation of functors 
$\astk \upsilon \circ \stkGr (\Phi) \Rightarrow \astk\nu$, it follows that $\shp$ is locally free of rank $1$ as filtered module, and $\Gr(\shp)$ is free as $\O^{hom}_Y$-module.

Let $\upsilon_{m} \colon F_{m}\shb\to \shl^{\tens m}$  denote the symbol map of order $m$ and
set $\shb_1 = \{P\in F_0\shb; \mbox{ $\upsilon_0(P)=1$}\}\subset F_0\shb^\times$.
It follows that $\shp$ is described by a 1-cocycle $\{P_{ij}\}$ with values in  $\shb_1$ and one may define the 1-cocycle $\{\tilde\upsilon_{-1}(P_{ij})=\upsilon_{0}(P_{ij}-1)\}$ with values in $\shl^{\tens -1}$. Then one proceeds in a similar way as in (i) and gets that $\{P_{ij}\}$ is cohomologous to the trivial cocycle. $\shp$ is thus free of rank $1$ and $\shendo[\shb^\op](\shp)\simeq \shb$ as $\C$-algebras.
\end{proof}

\begin{remark}
For $\stka=\stke_Y$, the construction in the proof of Theorem~\ref{th:stkVSsh} (i) produces a class in $H^2(Y;\shl^{\tens -1})\simeq H^2(Y;  \Omega_{Y}^{1,cl}(-1))$ which gives an obstruction to the existence of an $\E$-algebra $\sha$ satisfying $\astk \sha\approx \stke_Y$. This has to be compared with the first Rozansky-Witten class of the canonical symplectification $\widetilde Y$ as defined in~\cite{Kapranov1999} (see~\cite{BGNT2007} for the symplectic case).
\end{remark}

\begin{proposition}
If $Y$ is Fano of $\dim Y \geq 3$, then   
$$
\{ \widehat\E \text{-algebras}\}_Y\simeq \{(\widehat\E,\sigma)\text{-algebroids}\}_Y
$$
\end{proposition}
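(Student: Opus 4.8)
The plan is to deduce this from Theorem~\ref{th:stkVSsh}~(ii): it suffices to verify that $H^1(Y;\shl^{\tens k})=0$ and $H^2(Y;\shl^{\tens k})=0$ for every $k\leq -1$, where $\shl$ is the contact line bundle of $Y$ and $\dim Y=2n+1$.

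First I would observe that $\shl$ is ample. By Definition-Proposition~\ref{def-prop:contact form}~iii) (and the remark immediately following it) one has $\Omega_Y\simeq\shl^{\tens -(n+1)}$, so that $\shl^{\tens (n+1)}\simeq\Omega_Y^{\tens -1}$ is the anticanonical bundle of $Y$, which is ample precisely because $Y$ is Fano. Since a positive tensor power of $\shl$ is ample, $\shl$ itself is ample; in particular $Y$, being Fano, is projective, hence compact K\"ahler.

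Next I would apply the Serre-dual form of the Kodaira--Nakano vanishing theorem: for an ample line bundle $A$ on a compact K\"ahler manifold $X$ of dimension $N$, one has $H^q(X;A^{\tens -1})=0$ for all $q<N$ (this is Serre duality applied to $H^{N-q}(X;\Omega_X\tens A)=0$). Taking $X=Y$, $N=2n+1$, and $A=\shl^{\tens -k}$, which is ample for $k\leq -1$, this gives $H^q(Y;\shl^{\tens k})=0$ for all $q\leq 2n$ and all $k\leq -1$. The hypothesis $\dim Y\geq 3$ forces $n\geq 1$, hence $2n\geq 2$, so this range contains $q=1$ and $q=2$. Both hypotheses of Theorem~\ref{th:stkVSsh}~(ii) are therefore satisfied, and the claimed isomorphism $(\cdot)^+\colon\{\widehat\she\text{-algebras}\}_Y\isoto\{(\widehat\she,\sigma)\text{-algebroids}\}_Y$ follows at once.

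The argument is short and I do not expect a genuine obstacle here: the real content has already been absorbed into Theorem~\ref{th:stkVSsh}, whose cocycle-cleaning proof (successively killing the symbols $\sigma'_{-j}(P_{ijk})$ using the vanishing of $H^\bullet(Y;\shl^{\tens -j})$) is where the work sits. The only points needing a little attention are the identification $\Omega_Y\simeq\shl^{\tens -(n+1)}$, which is what converts the Fano hypothesis into ampleness of $\shl$, and the bookkeeping that the cohomological degrees produced by the vanishing theorem actually include $1$ and $2$ --- which is exactly where $\dim Y\geq 3$ enters.
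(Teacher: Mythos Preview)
Your argument is correct and follows essentially the same route as the paper: both use $\Omega_Y\simeq\shl^{\tens -(n+1)}$ to convert the Fano hypothesis into ampleness of $\shl$, then invoke Kodaira vanishing to obtain $H^i(Y;\shl^{\tens k})=0$ for $k\leq -1$ and $i\leq \dim Y-1$, and conclude via Theorem~\ref{th:stkVSsh}. Your write-up is simply more explicit about the intermediate steps (e.g.\ that Fano implies projective, and that ampleness of a positive tensor power forces ampleness of $\shl$).
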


\begin{proof}
Let $Y$ be compact. By definition, $Y$ is Fano if the dual of $\Omega_Y\simeq\shl^{\tens -(n+1)}$
is ample, hence if and only if $\shl$ is ample. By the Kodaira vanishing theorem, one gets 
$H^i(Y;\shl^{\tens k})=0$ for any $k\leq -1$ and $i \leq\dim Y -1$. The result then follows from 
Theorem~\ref{th:stkVSsh}.
\end{proof}

In particular, the above isomorphism holds if $Y$ is compact K\"ahler of $\dim Y\geq 3$ with 
$b_2(Y)=1$, since it is Fano (see~\cite[Corollary 3]{Demailly2002}). If moreover $Y$ is connected, from (the formal analogue of) Proposition~\ref{pr:Fano} (ii) it follows that these sets are singletons.

\begin{theorem}\label{th:Boutet2}
Let $M$ be a complex manifold. The morphism of pointed sets
$$
(\cdot)^+\colon \{ \widehat\E \text{-algebras}\}_{P^*M}\to
\{ \widehat\E \text{-algebroids}\}_{P^*M}
$$
is surjective. If moreover $\dim M \geq 3$, then it is an isomorphism.
\end{theorem}

\begin{proof}
If $\dim M \geq 2$, the map $(\cdot)^+$ is surjective by (the formal analogue of) 
Proposition~\ref{prop:Boutet}. If $\dim M =1$, then $P^*M\simeq M$ and $\O_{M}(1)$ identifies with the 
sheaf $\Theta_M$ of holomorphic vector fields. Since $H^2(M;\Theta_M^{\tens k})=0$ for any $k$, 
by Theorem~\ref{th:stkVSsh} (i) the map $(\cdot)^+$ is surjective.

Set $\dim M=n+1$ and let $\pi\colon P^*M\to M$ be the projection. Recall that one has
$$
 R\pi_* \O_{P^*M}(k)\simeq \left\{ \begin{array}{l}
     S^{k}_{\O_M}(\Theta_M)\mbox{ if $k\geq 0$}\\
    S^{-k-n-1}_{\O_M}(\Omega^1_M)\tens \Omega_M[-n]\mbox{ if $k\leq -1$}
     \end{array}\right.
$$
where $S^i_{\O_M}$ denotes the $i$-th symmetric product over $\O_M$ ($=0$ for $i\leq -1$).
If $k\leq -1$, we thus have 
$$
H^i(P^*M;\O_{P^*M}(k))\simeq H^i(M;R\pi_* \O_{P^*M}(k))\simeq 
H^{i-n}(M;S^{-k-n-1}_{\O_M}(\Omega^1_M)\tens \Omega_M)
$$
It follows that $H^i(P^*M;\O_{P^*M}(k))=0$ for any $k\leq -1$ and $i\leq n-1$. 
If $\dim M\geq 3$, then $(\cdot)^+$ is an isomorphism by Theorem~\ref{th:stkVSsh} (ii).
\end{proof}
Thanks to (the formal analogue of) Corollary~\ref{cor:Boutet}, one recovers that the $\widehat\E$-algebras on $P^*M$ are classified by $H^1(M;\Omega_M^{1,cl})$ if $\dim M \geq 3$ 
(cf.~\cite[Theorem 1]{BoutetdeMonvel1999}, where this is proved by using (the formal analogue of) Proposition~\ref{pr:E-algebras} and by showing that $H^1(P^*M;\widehat\E_{P^*M,1})=0$). In particular, any $\widehat\E$-algebra is of the form $\widehat\E_{\shn}$ for a $\C^\times$-twisted line bundle $\shn$ on $M$.

By using Theorem~\ref{th:DKPSW}, one gets
\begin{corollary}\label{cor:DKPSW2}
Let $Y$ be projective of $\dim Y\geq 5$ with $b_2(Y)\geq 2$. Then   
$$
\{ \widehat\E \text{-algebras}\}_Y\simeq \{(\widehat\E,\sigma)\text{-algebroids}\}_Y
$$
\end{corollary}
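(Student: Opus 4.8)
The plan is to reduce Corollary~\ref{cor:DKPSW2} directly to the previously established results, combining the structure theorem for high-dimensional projective contact manifolds with $b_2 \geq 2$ and the comparison theorem on projective cotangent bundles. First I would invoke Theorem~\ref{th:DKPSW}: since $Y$ is projective of $\dim Y\geq 3$ with $b_2(Y)\geq 2$, there is a projective manifold $M$ and a contact isomorphism $\varphi\colon Y \isoto P^*M$. A contact isomorphism induces an equivalence between the stacks $\stke_Y$ and $\opb\varphi\stke_{P^*M}$ compatible with the filtrations and the graded equivalences to $\O_Y^{hom \, +}$, hence it induces bijections on both sides of the map $(\cdot)^+$; in particular it identifies $\{\widehat\she\text{-algebras}\}_Y$ with $\{\widehat\she\text{-algebras}\}_{P^*M}$ and $\{(\widehat\she,\sigma)\text{-algebroids}\}_Y$ with $\{(\widehat\she,\sigma)\text{-algebroids}\}_{P^*M}$, compatibly with $(\cdot)^+$.

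Next I would track the dimension constraint. If $\dim Y = 2n+1 \geq 5$, then $Y \cong P^*M$ forces $\dim M = n+1 \geq 3$. This is precisely the hypothesis needed in Proposition~\ref{pr:Boutet2}, which asserts that on $P^*M$ with $\dim M \geq 3$ the map $(\cdot)^+$ is an isomorphism of pointed sets. Composing these identifications gives the claimed isomorphism $\{\widehat\she\text{-algebras}\}_Y\isoto \{(\widehat\she,\sigma)\text{-algebroids}\}_Y$. One should also remark that the corollary could alternatively be deduced by checking the cohomological hypotheses of Theorem~\ref{th:stkVSsh} directly — namely $H^i(Y;\shl^{\tens k}) = 0$ for $k\leq -1$ and $i\leq 2$ — using the vanishing on $P^*M$ computed from the formula for $R\pi_*\O_{P^*M}(k)$ in the proof of Proposition~\ref{pr:Boutet2}, together with the Leray spectral sequence; but the transport-of-structure argument via Theorem~\ref{th:DKPSW} is cleaner.

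I expect the only real point requiring care — and hence the "main obstacle", though it is a mild one — is to verify that the contact isomorphism $\varphi$ transports the canonical quantization: that is, that $\opb\varphi\stke_{P^*M} \approx \stke_Y$ as $(\she,\sigma)$-algebroids, so that the chosen base points (the classes of $\stke_Y$ and of an $\she$-algebra $\sha$ with $\astk\sha\approx\stke_Y$) match up under the identification and the statement is really an isomorphism of \emph{pointed} sets. This follows from the canonicity asserted in Theorem~\ref{th:existence} and the functoriality of the construction under contact transformations, since $\tilde\varphi$ is a $\C^\times$-equivariant symplectic transformation of the symplectifications; the filtered and graded compatibilities are then automatic because, by Lemma~\ref{lemma:E-morphism} and the discussion following it, all the relevant structures are intrinsic to the $\she$-algebras and $(\she,\sigma)$-algebroids involved. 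Once this is in place, the corollary is immediate.
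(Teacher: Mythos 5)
Your proposal is correct and follows essentially the same route as the paper: invoke Theorem~\ref{th:DKPSW} to identify $Y$ contact-isomorphically with $P^*M$ for $M$ projective, observe that $\dim Y\geq 5$ forces $\dim M\geq 3$, and conclude by Proposition~\ref{pr:Boutet2} that $(\cdot)^+$ is an isomorphism. Your additional remarks on transporting $\stke_Y$ and the base points under the contact isomorphism, and the alternative via Theorem~\ref{th:stkVSsh}, are sound but not needed beyond what the paper's one-line argument uses.
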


The ``exotic phenomena", \emph{i.e.} the existence of non-isomorphic $\widehat\E$-algebras giving rise to equivalent $(\widehat\E,\sigma)$-algebroids, appear if $\dim Y \leq 3$. Precisely, for $Y\simeq P^*M$ with $M$ a compact K\"ahler surface different from $\mathbb P^2$ (in which case $Y$ would be Fano), and for curves. Several computations for these cases may be found in~\cite{BoutetdeMonvel1999}. Let us briefly consider the 1-dimensional case\footnote{Note that the original manuscript contains an error. See~http://www.math.jussieu.fr/$\sim$boutet/ComplexStar.pdf for a corrected version.}.

Let $Y$ be a connected compact Riemann surface of genus $g$.
Recall from Remark~\ref{rk:1-dim} that the contact line bundle is $\Theta_Y$.
By Theorem~\ref{th:Boutet2},  for any $(\widehat\E,\sigma)$-algebroid $\stka$ there 
exists an $\widehat\E$-algebra $\sha$ such that $\stka\approx \sha^+$.
However, the classification of the $(\widehat\E,\sigma)$-algebroids (which is equivalent to that given at the end of Section~\ref{subsection:class} for the holomorphic case) differs from that of 
the $\widehat\E$-algebras, since $H^1(Y;\Theta_Y^{\tens k})$ is in general 
not zero for $k\leq -1$ (in particular $H^1(Y;\Theta_Y^{\tens -1}) \simeq\C$ by the residue map). 

Let $g\geq 2$. Since $H^1(Y;\Theta_Y^{\tens k})=0$ for any $k\leq -2$, the morphism~\eqref{eq:sub} induces $H^1(Y;\widehat\E_{Y,1})\simeq H^1(Y;\Theta_Y^{\tens -1})\simeq\C$ and the exact
sequence~\eqref{eq:longexact} reduces to
$$
0 \to \C \to \{ \widehat\E \text{-algebras}\}_Y \to \C^{2g} \to 0
$$
Since it splits, it follows that the $\widehat\E$-algebras are classified by $\C^{2g+1}$.

Let $g=0$. Then $Y\simeq \mathbb P^1$. Since $H^i(\mathbb P^1;\Omega_{\mathbb P^1}(0))=0$
for $i=0,1$, by the exact sequence~\eqref{eq:longexact} it follows that the $\widehat\E$-algebras are classified by $H^1(\mathbb P^1;\widehat\E_{\mathbb P^1, 1})$. This is far to be 
trivial, since $H^1(\mathbb P^1;\Theta_{\mathbb P^1}^{k})\simeq H^1(\mathbb P^1;\O_{\mathbb P^1}(2k))$ is never vanishing for $k\leq -1$.

\providecommand{\bysame}{\leavevmode\hbox to3em{\hrulefill}\thinspace}

\end{document}